\documentclass[11pt,reqno]{amsart}
\usepackage{amscd,amsmath,amsopn,amssymb,amsthm,multicol}
\usepackage{tikz,tikz-cd,anysize,verbatim,ifthen,xargs,colortbl,float,framed}
\usepackage{longtable,mathtools, hyperref, cleveref, tcolorbox}
\usepackage[english]{babel}
\usepackage[utf8]{inputenc}
\usepackage[numbers]{natbib}
\everymath=\expandafter{\the\everymath\displaystyle}

\usetikzlibrary{decorations.markings,arrows,arrows.meta,bending,calc}
\tikzset{>={Stealth[scale=1.5, bend]}}
\usetikzlibrary{shapes.geometric}

\usepackage{comment}
\usepackage{csquotes}
\usepackage{dsfont}
\usepackage{mathdots}

\usepackage{makecell} \setcellgapes{6pt}

\theoremstyle{plain}
\newtheorem{theorem}{Theorem}
\newtheorem*{theorem*}{Theorem}
\newtheorem{corollary}{Corollary}[theorem]
\newtheorem{prop}{Proposition}
\newtheorem{lemma}{Lemma}

\theoremstyle{definition}
\newtheorem{definition}{Definition}
\newtheorem{example}{Example}
\newtheorem{remark}{Remark}

\newcommand\com[1]{}

\newcommand\N{{\mathbb N}}

\newcommand\R{{\mathbb R}}

\makeatletter
\newcommand*\circled[2][1.6]{\tikz[baseline=(char.base)]{
    \node[shape=circle, draw, inner sep=1pt, 
        minimum height={\f@size*#1},] (char) {\vphantom{WAH1g}#2};}}
\makeatother

\makeatletter
\newcommand{\dashedcircle}[2][1.6]{%
  \tikz[baseline=(char.base)]{%
    \node[shape=circle, draw, dashed, inner sep=1pt,
        minimum height={\f@size*#1 pt},] (char) {\vphantom{WAH1g}$#2$};%
  }%
}
\makeatother

\makeatletter
\newcommand{\dashedbox}[1]{%
  \tikz[baseline=(char.base)]{%
    \node[draw, dashed, inner sep=1.9pt,
        minimum height={\f@size*1.6 pt},] (char) {\vphantom{WAH1g}$#1$};%
  }%
}
\makeatother

\makeatletter
    \def\@@and{}
\makeatother

\title[Jet-density of finite gap solutions]{Jet-density of finite-gap solutions for classes of BKM systems}

\author{Manuel Quaschner}
\address{Institute of Mathematics, Friedrich-Schiller-Universität, 07737 Jena, Germany.\newline
\ E-mail: {\tt manuel.robert.quaschner@uni-jena.de}.}

\author{Wijnand Steneker}
\address{Department of Mathematics and Statistics, UiT the Arctic University of Norway, Troms\o\ 9037, Norway.
\ E-mail: {\tt wijnand.s.steneker@uit.no}. }

\begin{document}

\begin{abstract}
We show that jets of initial data can be approximated up to arbitrary 
order by finite-gap solutions for classes of so-called BKM systems of PDEs introduced by Bolsinov--Konyaev--Matveev, 
which include classical PDEs such as KdV, Kaup--Boussinesq and Camassa--Holm. Finite-gap solutions are obtained via a finite-reduction map, defined algebraically, which sends solutions of a Stäckel system to solutions of the BKM PDE. For the classes containing KdV and Kaup--Boussinesq we obtain full jet-surjectivity via a triangular structure, whereas for the class containing Camassa--Holm we establish jet-surjectivity on an open set of initial data over $\mathbb{R}$ and a Zariski-open (dense) set over $\mathbb{C}$.
\end{abstract}

\maketitle

\tableofcontents

\section{Introduction}

The study of integrable partial differential equations (PDEs) has a long 
history in mathematics. There are many different notions of what integrability of a PDE should mean: 
\begin{itemize}
    \item the PDE admits a hierarchy of commuting flows (symmetries on infinite jet bundle);
    \item the PDE arises as the compatibility condition of a Lax pair;
    \item the existence of \enquote{sufficiently many} solutions that one can construct explicitly. 
\end{itemize}

There are many famous examples of integrable PDEs such as the Korteweg--
de Vries equation (KdV), Camassa--Holm equation (CH), Harry--Dym equation (HD) and many others. The construction of explicit solutions for the KdV equation has been extensively studied, and reveals the intimate interplay between the several notions of integrability. In their seminal paper on the KdV equation \cite{korteweg1895xli} in 1895, Korteweg and de Vries wrote down a solitary wave solution. During the 1970s a lot of progress was made on constructing explicit solutions for KdV by using its Lax formulation \cite{dubrovin1975inverse},\cite{its1975hill} ,\cite{marchenko1974periodic}, \cite{mckean1975spectrum}, \cite{novikov1974periodic}
and isospectrality of its Lax operator. The Lax operator for KdV is the Schrödinger--Hill operator
\begin{equation}
    - \partial_x^2 + q(x) 
\end{equation}
and for periodic potentials $q(x)$ it turns out that the spectrum consists of bands (closed intervals) separated by gaps. For the special class of finite-gap potentials $q(x)$, the spectrum consists of finitely many bands:
\begin{equation}
    [\lambda_1, \lambda_2] \cup [\lambda_3, \lambda_4] \cup \cdots \cup [\lambda_{2g-1}, \lambda_{2g}] \cup [\lambda_{2g+1}, \infty).
\end{equation}
One can build solutions from a collection of $g$ points, with one Dirichlet eigenvalue in each gap, and these give an alternative characterization of finite-gap solutions. Another method of obtaining finite-gap solutions is by considering stationary solutions of the hierarchy. This amounts to truncating the hierarchy at some order which produces an ODE inside the PDE. In the years following, the finite-gap solutions were also studied from the algebro-geometric point of view, where one associates to the band-edges $\lambda_1, \dots, \lambda_{2g+1}$ a Riemann surface of genus $g$ defined by
\begin{equation}
    \mu^2 = \prod_{i= 1}^{2g + 1} (\lambda - \lambda_i).
\end{equation}
Explicit quasi-periodic finite-gap solutions can then be expressed in terms of theta functions associated to the Jacobi variety \cite{dubrovin1976non}, \cite{krichever1977methods}, \cite{bernatska2024reality}.

In the recent paper \cite{bolsinov2022applications} Bolsinov--Konyaev--Matveev constructed a general family of evolutionary PDEs containing many famous examples and showed the existence of a hierarchy of commuting symmetries and conservation laws for these. The details of the construction are delegated to \cref{sec:Background}, but we mention here that the family of BKM PDEs is parametrized by a triple $(n, L, m(\mu))$, where $n$ is a positive integer (number of components), $L$ a Nijenhuis operator in companion form and $m(\mu)$ a nonzero polynomial. For example, KdV comes from $m \equiv 1$, Camassa--Holm from $m(\mu) = \mu$. We note that these BKM PDEs were studied in the context of the Nijenhuis Geometry Program.

In a further paper \cite{bolsinov2025finite}, Bolsinov--Konyaev--Matveev constructed finite gap solutions for these systems using solutions from a Stäckel system together with a finite-reduction map, which embeds the solutions of the Stäckel system into the solutions of the PDE. We recall the construction of finite-gap solutions in \cref{sec:Background}, these are essentially solutions of a Hamiltonian system constructed from the following data: a positive integer $N$, a polynomial $c(\mu)$ and the BKM data $(n, L, m(\mu))$. The embedding comes from a finite-reduction map $\mathcal{R}$, which is defined through algebraic conditions involving the polynomials $c, m$ as well as two Nijenhuis operators. Konyaev and Matveev also constructed Lax pairs for these systems in \cite{konyaev2025lax}. Thus, BKM systems are integrable in the sense of the first two definitions. In this paper, we address establishing integrability in the third sense for several classes of BKM systems.

As it stands, the third definition of integrability is quite vague and we would like 
to make more precise what we will actually study. The BKM PDEs are (formal) 
evolutionary equations in $1+1$ dimensions, so a solution depends on one spatial parameter and one time parameter. By virtue of the Cauchy--Kovalevskaya theorem, we know that in the analytic setting initial data $x \mapsto u(x,0)$ of an evolutionary PDE locally determines a solution $(x,t) \mapsto u(x,t)$. 

Therefore, we aim to approximate an arbitrary initial data by finite-gap solutions. There are different approaches to approximation. In this paper we 
focus on the problem of approximating the $k$-jets of the initial data up to 
any order. This is rather a formal approximation. We address the question of uniform convergence in the outlook. We emphasize that we study the spatial dynamics in $x$ of the finite-gap solutions. In \cite{bolsinov2025finite} it is explained how a quasi-linear 
PDE system defines dynamics in time $t$ on the space of solutions $u(x)$.

Our main theorems state that we can approximate the $k$-jet at $x = 0$ of the initial data up to any order for the case (i) $\deg(m) = 0$, $n \in \mathbb{N}$ and (ii) $\deg(m) = 1$ and $n = 1$. We analyze the structure of the Stäckel integrable system that determines the finite-gap solutions as well as the finite-reduction map (which maps solutions of Stäckel system to solutions of BKM PDE). We note that the idea of the proof could be applied to other PDEs with finite-gap solutions. In the BKM systems that we shall consider, we can control both the equations coming from the Stäckel system as well as the finite-reduction map. We obtain the following theorems. (For the notation and background on BKM systems and jets we refer the reader to Section~\ref{sec:Background}.)

\begin{theorem}[\textbf{Approximating jets of initial data for $\deg(m) = 0$}] \label{thm:ApproximationCasem=1}
	Consider an $n$-component BKM system with $\deg(m) = 0$. Let $\mathcal{R}: \mathbb{R}^N \rightarrow 
	\mathbb{R}^n$ be the finite-reduction map for the Stäckel system with $N$ gaps. A $k$-jet of any initial data $x \mapsto u(x,0)$ can be realized as the $k$-jet of a finite-gap solution. More precisely, for any $k$-jet $j_0^ku$ there exists $N \in \mathbb{N}$ and an initial condition $(w(0), p(0))$ determining a solution $w$ such that $\mathcal{R}(j_0^kw) = j^k_0u$.
\end{theorem}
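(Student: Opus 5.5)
We plan to exploit a triangular structure in the finite-reduction map $\mathcal{R}$ combined with the freedom of choosing the initial point $(w(0),p(0))$ and the polynomial $c(\mu)$ of the Stäckel system. For $\deg(m)=0$ the finite-reduction map should, after unwinding its algebraic definition from \cref{sec:Background}, express the components $u^1,\dots,u^n$ as (essentially) elementary symmetric functions of the Stäckel coordinates $w^1,\dots,w^N$, shifted by terms depending on $c$ and $L$. Our first step is to make this explicit: write $\mathcal{R}$ and the Stäckel equations of motion in the $x$-flow, so that $w_x$ is an explicit function of $(w,p)$ and $p_x$ is an explicit function of $(w,p)$ through the Stäckel Hamiltonian. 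Differentiating repeatedly, the $k$-jet $j_0^k w$ is a polynomial (or rational) function of $(w(0),p(0))$ and of the coefficients of $c$.

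The second step is to set up the counting. The target $j_0^k u$ lives in a space of dimension $n(k+1)$. We choose $N$ large (say $N \ge n(k+1)$) and aim to show that the map
\[
(w(0),p(0),c) \longmapsto \mathcal{R}(j_0^k w)
\]
is surjective onto the jet space. The key claim to establish is triangularity: order the target coordinates as $u^i_{(s)}$, $s=0,\dots,k$, $i=1,\dots,n$, and find a corresponding ordered list of free parameters (certain coordinates of $p(0)$ and coefficients of $c$, or the higher symmetric functions of $w(0)$) such that $u^i_{(s)}$ depends on the $(i,s)$-th parameter affinely with a nonzero (indeed constant, since $m$ is constant) coefficient, and only on parameters earlier in the order otherwise. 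For KdV ($n=1$) this reflects the well-known fact that the $s$-th derivative of $u$ along a stationary flow brings in a new integral/coefficient linearly at each order; for general $n$ we expect the companion form of $L$ to produce the same phenomenon component by component, with $L$'s companion structure shifting the index $i$.

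Given triangularity, surjectivity follows by solving successively: fix the lowest-order parameters to match $u(0)$, then the next to match $u_x(0)$, and so on, each step being a solvable linear equation in one new unknown; the earlier unknowns are already fixed. Over $\mathbb{R}$ we must also make sure the chosen $(w(0),p(0))$ is an admissible real initial condition (e.g.\ $w^i(0)$ distinct, avoiding singularities of the Stäckel metric); we handle this by noting the leftover free parameters (since $N$ can be taken larger than needed) can be chosen generically.

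The main obstacle is proving the triangularity rigorously, i.e.\ identifying precisely which new parameter enters linearly at each derivative order and verifying its coefficient is nonvanishing. We would attack this by induction on $s$, tracking only the ``top'' term: show that $\partial_x^{s}$ of the relevant symmetric function of $w$ equals (new parameter)$\times$(nonzero constant from $m$) plus a polynomial in previously introduced quantities, using that the Stäckel Hamiltonians for constant $m$ are of weight-homogeneous form so that each differentiation raises a grading by one and new coefficients of $c$ appear exactly at the top degree.
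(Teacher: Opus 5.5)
Your architecture is the same as the paper's: weighted homogeneity of the Stäckel Hamiltonian, an induction on derivative order that tracks a single top-degree term entering linearly, and successive solving of a triangular system. As written, however, the proposal has two genuine gaps.

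\textbf{First gap: the level-set constraint.} You treat $(w(0),p(0))$ and the coefficients of $c$ as independent free parameters. They are not. The finite-gap construction only applies to trajectories on the level set $X=F^{-1}(0)$, which imposes $N$ constraints $H_i(w(0),p(0);c)=0$. If some coefficients of $c$ are also supposed to be your ``new'' parameters, these constraints can destroy the triangularity. The missing observation concerns the last $N$ coefficients of $c$, i.e.\ the $d_i$ in $c(\mu)=\mu^{2N+n}+m(\mu)d(\mu)$ with $\deg d\le N-1$. They only shift $H_i$ by constants, and they appear neither in the equations of motion nor in $\mathcal{R}$, which only involves $c_1,\dots,c_n$. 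Setting $d_i:=-\tilde H_i(w(0),p(0))$ therefore makes all $2N$ phase-space coordinates genuinely free; this is \cref{cor:fin_gap_entire_cotangent_bundle}.

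\textbf{Second gap: the triangularity step.} The step you call the main obstacle is the substance of the proof, and it needs concrete inputs that homogeneity alone does not give.

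(a) The reduction map is itself triangular. From $\sum_{j+k+l=i}u_jw_kw_l=c_i$ one gets $u_i=c_i-2w_i+P_i(w_1,\dots,w_{i-1},c_1,\dots,c_{i-1})$. Here the $w_i$ are already the companion coordinates, not eigenvalues, and $L$ plays no separate role. So it suffices to realize $j^k_0(w_1,\dots,w_n)$.

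(b) Take $\deg w_i=i$ and $\deg p_i=N+1-i+\frac n2$. Then $H_1$ is homogeneous of degree $N+n+1$, and each $x$-derivative raises the degree by $\frac n2$. Since all variables have positive degree, a variable of top degree can only occur linearly. With $c$ as above the vector field has no constant part, so the linear part of $(w_i)_{(k+1)x}$ comes only from differentiating the linear part of $(w_i)_{kx}$.

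(c) You must then compute the linear part of the vector field. From $K_1$, the linear part of $(w_l)_x$ is a nonzero multiple of $p_{N+1-l}$. Write $V_1=\frac1{m_0}v_{1,2N+n}+d_1$; its quadratic part is $\frac1{m_0}\sum_{a+b=N+n+1}w_aw_b$, obtained from the Cayley--Hamilton recursion for $M^k$. Hence the linear part of $(p_l)_x$ is a nonzero multiple of $w_{N+n+1-l}$. It is this nonvanishing, not homogeneity, that keeps the chain alive. Iterating gives, for $k\le\lfloor 2N/n\rfloor-1$, a unique linear term with $a_k\ne0$: it is $a_kw_{nk/2+i}$ for $k$ even and $a_kp_{N+1-(n(k-1)/2+i)}$ for $k$ odd. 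Solving in order of increasing degree then gives surjectivity.

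Two side remarks. At odd orders only a momentum can enter linearly, because coefficients of $c$ have vanishing $x$-derivative; so the heuristic of ``a new coefficient of $c$ at each order'' cannot work as stated. Your worry about real admissibility (distinct $w^i(0)$) is unnecessary: in companion coordinates everything is polynomial on all of $T^*\mathbb{R}^N$.
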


Theorem~\ref{thm:ApproximationCasem=1} includes for example the well-known cases for the KdV equation and Kaup--Boussinesq. We note that similar results to Theorem~\ref{thm:ApproximationCasem=1} have been obtained by Blaszak--Szablikowski--Marciniak \cite{blaszak2023stackel}, \cite{szablikowski2024stationary}, using explicit Lax representations and stationarity of the hierarchy to obtain an embedding of an ODE into the PDE.

\begin{theorem}[\textbf{Approximating jets of initial data for $\deg(m) = 1, n=1$ on an open set}] \label{thm:ApproximationCasemlinear}
	Consider a $1$-component BKM system with $\deg(m) = 1$. Let $\mathcal{R}: \mathbb{R}^N \rightarrow \mathbb{R}$ be the finite-reduction map for the Stäckel system with $N$ gaps. For an open set of $k$-jets $j_0^ku$ there exists $N \in \mathbb{N}$ and an initial condition $(w(0), p(0))$ determining a solution $w$ such that $\mathcal{R}(j_0^kw) = j^k_0u$. 
    
    Moreover, the BKM system and finite-reduction map may be viewed over the complex numbers. Over $\mathbb{C}$, the statement holds true for a Zariski-open (in particular: dense) set of $k$-jets.
\end{theorem}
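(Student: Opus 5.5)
We plan to study the composite map
\[
\Phi_{N,k}\colon (w(0),p(0)) \longmapsto \mathcal{R}\bigl(j^k_0 w\bigr),
\]
which sends initial data of the Stäckel system with $N$ gaps and fixed $c(\mu)$ to the $k$-jet at $x=0$ of the associated finite-gap solution. Along the $x$-flow of the Stäckel system, the derivatives $w^{(j)}(0)$ are polynomial (rational at worst) in $(w(0),p(0))$ and in the coefficients of $c$. This follows by repeatedly differentiating Hamilton's equations and substituting the equations back in. Since $n=1$, the target space of $k$-jets is $\mathbb{R}^{k+1}$ (resp.\ $\mathbb{C}^{k+1}$), so $\Phi_{N,k}$ is an algebraic map between affine spaces. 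Over $\mathbb{C}$, the Zariski-open statement reduces to showing that $\Phi_{N,k}$ is dominant for $N$ large, say $N\ge k+1$. Chevalley's theorem then says the image is constructible, and a dominant constructible image contains a dense Zariski-open set. Over $\mathbb{R}$, it suffices to find one point where the differential $d\Phi_{N,k}$ has rank $k+1$. The submersion theorem then gives an open set of jets in the image, and over $\mathbb{C}$ the same rank condition at one point also gives dominance. So the whole proof reduces to a single Jacobian rank computation.

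For the rank computation, we will follow the scheme used for Theorem~\ref{thm:ApproximationCasem=1}. There, the case $\deg m=0$ presumably yields a triangular structure: the $j$-th derivative of $\mathcal{R}(w)$ depends on a new variable (a new coefficient of $c$, or a new coordinate of $(w,p)$) only linearly and with a nonvanishing coefficient. For $m(\mu)=\mu$, we expect the finite-reduction condition to be genuinely nonlinear. For instance, $u$ is determined by an algebraic relation such as $m(L)$ times something, rather than as an explicit linear function of $w$. So we will write $u=\mathcal{R}(w)$ and use the implicit function theorem near a point where the defining relation is nondegenerate. We then compute $u^{(j)}(0)$ recursively and isolate the leading dependence on the free parameters. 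Our concrete choice is to order the parameters $(w_1(0),\dots,w_N(0),p_1(0),\dots)$ together with the free coefficients of $c(\mu)$. We will then show that $\partial u^{(j)}/\partial \theta_j$ equals a nonzero factor times an expression that does not vanish identically, while $\partial u^{(i)}/\partial\theta_j=0$ for $i<j$ modulo lower-order terms. This gives a block-triangular Jacobian whose determinant is a nonzero polynomial.

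The main obstacle is that, unlike in the $\deg m=0$ case, the diagonal entries of this Jacobian will likely be polynomials that can vanish, for example factors of the form $\prod(w_i-w_j)$ or powers of $u$ (as for Camassa--Holm, where $m(\mu)=\mu$ introduces $u$ in denominators). This is exactly why only an open, respectively Zariski-open, set is claimed. To handle it, we will first show the determinant is not identically zero by specializing to a convenient point. We would first try a point where the Stäckel data degenerate to a diagonal or one-gap situation in which the $x$-flow can be integrated explicitly, and verify nonvanishing there. If that is too degenerate, we would instead evaluate near a point with distinct $w_i(0)$ and $p_i(0)\neq 0$, and use a Vandermonde-type factorization to show the leading minor is a nonzero multiple of a Vandermonde determinant. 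Once we have a single point of full rank, the Zariski-open statement over $\mathbb{C}$ and the open statement over $\mathbb{R}$ both follow from the reduction in the first paragraph.
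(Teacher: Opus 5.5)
Your reduction is correct and is the one the paper uses. The Taylor coefficients are rational in $(w(0),p(0))$, so full rank of the differential of the jet map at one point gives an open set of jets over $\mathbb{R}$, and via dominance a Zariski-open dense set over $\mathbb{C}$. The gap is that this rank computation is the whole content of the theorem, you do not carry it out, and the scheme you sketch for it would not work. Note first that the reduction map is not implicit. For $m(\mu)=\mu$, divisibility of $\sigma_u\sigma_w^2-c$ by $\mu$ just says $u\,w_N^2=c_{2N+1}$, so $u=c_{2N+1}/w_N^2$. You therefore only need to realize $k$-jets of the single coordinate $w_N$, with $c_{2N+1}\neq 0\neq w_N(0)$. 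For the Jacobian of these jets, the triangular argument behind Theorem~\ref{thm:ApproximationCasem=1} breaks down, for three reasons:
\begin{itemize}
\item Homogeneity of $H_1$ forces $\alpha=0$. By Proposition~\ref{prop:DifferentiationDegree}, every $(w_N)_{jx}$ then has the same degree $N$, so no new variable is forced to enter linearly at each order.
\item Bookkeeping ``modulo lower-order terms'' around $w=p=0$ is unavailable, because $V_1$ contains $c_{2N+1}/w_N$. The polynomial part of the flow couples $w_N$ only to $p_1$ at the linear level.
\item Away from a carefully chosen point there is no reason for the Jacobian to have any triangular zero pattern.
\end{itemize}
The fallbacks you list (a degenerate or one-gap configuration, a Vandermonde factorization) are not carried out. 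Explicit integrability of one orbit says nothing by itself about the variational equations in the $2N-2$ directions whose rank you need. Also, the coefficients $d_i$ of $c$ are used up by Corollary~\ref{cor:fin_gap_entire_cotangent_bundle} to put the orbit on the zero level set, and they do not enter the equations of motion. So they cannot serve as independent jet parameters.

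What is missing is the paper's choice of point and the structure it produces. Take the point $w_1=\dots=w_{N-1}=0$, $p_1=\dots=p_N=0$, with $w_N,c_{2N+1}\neq 0$, and order the $2N-2$ variables by degree as $w_1,p_{N-1},w_2,\dots,w_{N-1},p_1$. There the Jacobian of $((w_N)_x,\dots,(w_N)_{(2N-2)x})$ is anti-triangular, with anti-diagonal entries $(-1)^i2^{\lfloor (i-1)/2\rfloor}(c_{2N+1}/w_N^2)^{\lfloor i/2\rfloor}$ (Proposition~\ref{prop:structure_jacobian}). Proving this takes a chain of inductions. One shows that low-order $x$-derivatives of $w_i$ and $p_{N+1-i}$, and their partial derivatives, vanish at that point. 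One then evaluates the surviving terms exactly, using the reduced tangent numbers and the recursion solved by $s_i=2^{i-1}$. Adding $w_N(0)$ for order zero gives rank $2N-1$, i.e.\ jets up to order $k=2N-2$. Finally, you do not address a general $m(\mu)=m_1\mu+m_0$. The paper reduces this to $m(\mu)=\mu$ using the gauge shift $\lambda\mapsto\lambda+\lambda_0$, $m(\mu)\mapsto m(\mu-\lambda_0)$, the rescaling $x\mapsto ax$, $m\mapsto a^2m$, and an algebraic-continuation argument in $m_1$. Alternatively, it uses an affine change of Stäckel coordinates centred at the root of $m$.
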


Theorem~\ref{thm:ApproximationCasemlinear} includes for example the case of the Camassa--Holm equation. Note that the theorem asserts jet-surjectivity for an open set of jets (over the reals). We believe that full jet-surjectivity is also true in our setting and comment on it after the proof. Numerical experiments suggest that the assumption $n = 1$ is not essential, although we prove the result only in this case. 
\medskip

\textbf{Structure of the Article.} We recall the important definitions and statement in Section~\ref{sec:Background}. Starting with BKM systems, we consider the corresponding Stäckel systems and how to produce finite gap solutions from them using the reduction map $\mathcal{R}$. Then we recall some jet notation that we will use for the formulation of our results.

In Section~\ref{sec:FormulationProblem} we state the problem we are considering in a more formal way and derive some properties of the reduction map $\mathcal{R}$. We also give two examples that indicate our strategy for the proof of Theorem~\ref{thm:ApproximationCasem=1}.

Section~\ref{sec:GradingVariables} introduces our notation of grading of the Hamiltonians. It involves considering which terms could occur for the higher order Taylor coefficients and in the potentials for the Stäckel systems. We also investigate the potentials for different choices of BKM systems and derive some useful properties. 

Section~\ref{sec:Proofm=1} is devoted to the proof of Theorem~\ref{thm:ApproximationCasem=1}. This is a straightforward calculation using the concepts we introduced. \\
Section~\ref{sec:ProofDegm=1} then focuses on the proof of Theorem~\ref{thm:ApproximationCasemlinear}. This is split into two cases, namely $m(\mu) = \mu$ and a general polynomial of degree $1$. For the first case, we have to do some lengthy calculations for deriving how the coefficients of the Taylor approximation depend on our choice of initial conditions for the Stäckel system. We can prove that this dependence is nontrivial, hence the image is an open set. We will comment on what needs to be done to prove surjectivity of this map. The result for the general case can then be reduced to the Camassa--Holm case by a suitable gauge transformation.

Finally, in the outlook (Section~\ref{sec:Outlook}) we will comment on which other cases one could try to solve next and our thoughts on the problem of approximation in the other senses.

The appendices contain some explicit calculations from 
Section~\ref{sec:ProofDegm=1} and explicit examples for small dimension that were postponed in order not to interrupt the flow of reading.

\section{Background} \label{sec:Background}

\subsection{BKM systems of PDEs}

We recall the construction of the BKM family of systems of 
integrable PDEs. The family is a (formal) evolutionary PDE parametrized by a continuous 
parameter $\lambda \in \mathbb{R} \cup \{\infty\}$ and can be 
further classified into four types depending on (i) $\lambda 
\in \mathbb{R}$ or $\lambda = \infty$ and (ii) whether $
\lambda$ is a root (of the polynomial $m$ in the construction data) or not. The construction of the BKM PDEs depends on the following data:
\begin{itemize}
    \item the number $n$ of components (that is, the number of dependent variables)
    \item a Nijenhuis operator $L$ on $\mathbb{R}^n(u)$ that is \textit{differentially nondegenerate} (coefficients of characteristic polynomial are functionally independent a.e.).
    \item a polynomial $m(\mu) = m_n \mu^n + m_{n-1} \mu^{n-1} + \cdots + m_1 \mu + m_0 $ of degree $\leq n$. 
\end{itemize}
Let us comment on these conditions. There are $n$ functions $u^1(x,t), \dots, u^n(x,t)$ 
which appear in the PDE. Additionally, there will be a differential constraint equation in the form of an ODE (differentiation on $x$) on an auxiliary function $q(x,t)$. Recall that $L$ being a Nijenhuis operator means that its \textit{Nijenhuis torsion} vanishes
\begin{equation}
    \mathcal{N}_L(v,w) = L^2[v,w] + [Lv, Lw] - L[Lv,w] - L[v, Lw],
\end{equation}
where $[\cdot, \cdot ]$ denotes the commutator. The condition that $L$ in addition is nondegenerate means that the coefficients of the characteristic polynomial $\sigma_u(L)$ (subscript $u$ to emphasize the correspondence $u \leftrightarrow \sigma_u$)
\begin{equation}
    \sigma_u(L)(\mu) := \det(\mu\, \mathrm{Id} - L)
\end{equation}
are functionally independent, which implies that the coefficients can be taken as coordinates. In these coordinates, the Nijenhuis operator $L(u)$ depending on the variables $u = 
(u^1, \dots, u^n)$ takes the following companion matrix form:
\begin{align*}
	L &= \begin{pmatrix}
	-u_1 & 1 & 0 & \ldots & 0 \\
	-u_2 & 0 & 1 & \ddots & \vdots \\
	\vdots & & \ddots & \ddots & \\
	-u_{n-1} & & & 0 & 1 \\
	-u_{n} & 0 & & & 0
	\end{pmatrix}.
	\end{align*}
In this case, we say that $L$ is in \textit{companion form} with respect to the coordinates 
$u^i$. In the case that $m$ is not of maximal degree 
$n$ (i.e., $m_n = 0$), we say that it has a root at infinity. In order to define the BKM 
PDE, we introduce the vector field $\zeta_0$ which satisfies
\begin{equation}
    \mathcal{L}_{\zeta_0}\sigma_u = 1.
\end{equation}
As a consequence of differential nondegeneracy, this equation is well-defined. Next, we define the vector field $\zeta = m(L)\, \zeta_0$. We define the BKM system of PDEs by the equation
\begin{equation}
   \begin{cases}
       u_{t_{\lambda}} & = q_{xxx} \left(L - \lambda Id \right)^{-1}\zeta + q (L - \lambda Id)^{-1}u_x \\
       1  & = m(\lambda) (q_{xx}q - \frac12 q_x^2) + \sigma_u(\lambda)q^2.
   \end{cases}
\end{equation}
Here $t_{\lambda}$ is the time-coordinate but indexed with $\lambda$. We may rewrite this equation as a formal evolutionary flow by formally inverting the differential operator acting on $q$. We have the following four BKM types.

\textbf{BKM I}. If $\lambda \in \mathbb{R}$ and $\lambda$ is not a root of $m(\mu)$ then it is 
of type I:
\begin{equation}\label{eq:BKM_typeI}
    \begin{split}
        u_{t_{\lambda}} & = q_{xxx} \left(L -\lambda \text{Id} \right)^{-1} \zeta + q \left(L - \lambda \text{Id} \right)^{-1} u_x \\
        1 & = m(\lambda ) (q_{xx}q - \frac{1}{2} q_x^2) + \sigma_u(\lambda) q^2
    \end{split}
\end{equation}

\textbf{BKM II}. If $\lambda \in \mathbb{R}$ is a root $m(\lambda) = 0$ the constraint equation reduces to $q = \frac{1}{\sqrt{\sigma_u}}$. We call this case BKM type II:
\begin{equation}\label{eq:BKM_typeII}
    \begin{split}
        u_{t_{\lambda}} & = \left(\frac{1}{\sqrt{\sigma_u(\lambda)}}\right)_{xxx} \left(L -\lambda \text{Id} \right)^{-1} \zeta + \frac{1}{\sqrt{\sigma_u(\lambda)}} \left(L - \lambda \text{Id} \right)^{-1} u_x
    \end{split}
\end{equation}

\textbf{BKM III}. If $\lambda = \infty$ and $m_n \neq 0$, we obtain the following 
PDE, BKM type III:
\begin{equation}\label{eq:BKM_typeIII}
    \begin{split}
        u_{t_{\infty}} & = q_{xxx} \zeta + (L + q\ \text{Id}) u_x \\
        0 & = 2q + m_n q_{xx} - \text{tr}(L), 
    \end{split}
\end{equation}
where $m_n$ is the highest-degree coefficient of the polynomial $m(\mu)$. 

\textbf{BKM IV}. If $\lambda = \infty$ and $m_n = 0$ (i.e., root at infinity), then the 
constraint equation becomes $q = \frac{1}{2}\text{tr}(L)$, and so we obtain 
\begin{equation}\label{eq:BKM_typeIV}
    u_{t_{\infty}} = \frac{1}{2} (\text{tr}(L))_{xxx} \zeta + (L + \frac12 \text{tr}(L)\cdot \text{Id})\ u_x
\end{equation}

\subsection{Stäckel integrable systems}

A BKM system is constructed from the data $(n, L(u), m(\mu))$. To each BKM PDE we can 
associate a family of finite-dimensional Stäckel integrable systems indexed by a positive 
integer $N$ called the \textit{gap number}. The purpose of these integrable 
systems is to construct special solutions of the PDE, the so-called finite gap 
solutions. In the literature these finite-dimensional ODEs are often obtained by 
considering a stationary flow of the PDE hierarchy \cite{blaszak2023stackel}. Here we 
shall take a slightly different approach by first defining the Stäckel integrable system 
and then defining an embedding into the configuration space $\mathbb{R}^n(u)$ of the BKM 
PDE. The following is based on \cite[Section 1.3]{bolsinov2025finite} and the references therein.

We shall construct the \textit{reduced BKM system} for a BKM system with triple $(n, L, m)$
from the following additional data:
\begin{itemize}
    \item The number of gaps $N \in \mathbb{N}$, which can be arbitrarily large.
    \item A monic polynomial $c(\mu)$ of degree $2N + n$ of the form
    \begin{equation}
        c(\mu) = \mu^{2N+n} + \sum_{i=2}^{2N+n} c_i \mu^{2N+n-i}.
    \end{equation}
\end{itemize}
We now construct the $N$ Poisson-commuting Hamiltonians on $T^{*}\mathbb{R}^N$, equipped 
with the standard symplectic structure $\omega = \sum_i dw_i \wedge dp_i$ with respect to the cotangent 
coordinates $(w,p)$. We define a Nijenhuis operator $M$ and an inverse metric $\mathrm{g}^{-1}$ by declaring the $w^i$ to be companion coordinates
\begin{equation}\label{eq:M_and_g}
    M = \begin{pmatrix}
	-w_1 & 1 & 0 & \ldots & 0 \\
	-w_2 & 0 & 1 & \ddots & \vdots \\
	\vdots & & \ddots & \ddots & \\
	-w_{N-1} & & & 0 & 1 \\
	-w_{N} & 0 & & & 0
	\end{pmatrix}\hspace{1cm} \text{g}^{-1} = \begin{pmatrix}
	0 & \ldots & 0 & 0  & 1 \\
	0 & \ldots & 0 & 1  & w_1 \\
	\vdots & \iddots  & \iddots & \iddots  & w_2 \\
	0 & 1 &  w_1 & \iddots &  \vdots \\
	1 & w_1 & w_2 & \ldots & w_{N-1}.
	\end{pmatrix}
\end{equation}
The metric $\mathrm{g}$ is flat and the pair $(\mathrm{g}, M)$ is geodesically compatible in the sense of \cite{bolsinov2024applications}. 
We construct the kinetic energies from the inverse metric $g^{-1}$ and 
the Nijenhuis operator $M$:
	\begin{equation}\label{def_kinetics}
	\begin{split}
	& \frac{1}{2} g^{-1}(\det(\mu \cdot Id_N - M)(M^{T} - \mu Id_N )^{-1} p,p)\\
	& = K_1(w, p)\ \mu^{N-1} + K_2(w,p) \mu^{N-2} + \dots + K_{N}(w,p).
	\end{split}
	\end{equation}
Note that the kinetic energies do not depend on the choice of polynomial $c(\mu)$. Moreover, the kinetic energy $K_1$ is given by the flat 
(inverse) metric.

We now construct the potential energies. In order to couple the dynamics of the reduced system 
to the original BKM system, we define the rational function $f(\mu) := \frac{c(\mu)}{m(\mu)}$. 
We apply the function $f$ to the Nijenhuis matrix $M$ to obtain a matrix $f(M)$. Since this 
matrix commutes with $M$, we can express it as a linear combination of powers of $M$. As the matrix is gl-regular, we need all of the matrices $M^{N-1}, \ldots, M, \text{Id}$ and can write
\begin{equation}\label{def_potentials}
\begin{split}
	f(M) = V_1(w)\ M^{N-1} + \dots + V_{N}(w)\ \text{Id}_N
\end{split}
\end{equation}
The latter equation defines the potential energies $V_i(w)$. The $N$ Hamiltonian functions
\begin{equation}
    H_i := K_i(w,p) + V_i(w),\ i = 1, \dots, N \label{eq:Hamiltonians}
\end{equation}
on $T^{*}\mathbb{R}^N(w,p)$ commute with respect to the canonical Poisson bracket and 
are functionally independent almost everywhere, so that $F := (H_1, \dots, H_N)$ defines an 
integrable system on $T^{*}\mathbb{R}^N$.

Moreover, the system is of Stäckel type, so that the Hamilton--Jacobi equations can be separated completely. If we take the eigenvalues of $M$ to be coordinates $q^i$ and do the corresponding coordinate change using symmetric polynomials, the system takes a different form. One useful property of the new system is
\begin{equation}
    p_i^2 = \frac{c(q^i)}{m(q^i)} =: f(q^i)
\end{equation}
on the common zero level set.

\begin{remark}
The notion of Stäckel integrability is closely related to (additive) separation of variables. A Stäckel integrable system is one for which the $N$ Hamilton--Jacobi equations can be separated by a common generating function $S$.
\end{remark}

\subsection{Finite-reduction map and producing finite-gap solutions}\label{subsect_fin_red}

We now describe the finite-reduction procedure for a BKM system constructed 
from the data $(n,L(u),m(\mu))$ following \cite{bolsinov2025finite}. In the previous 
section, we constructed from the additional data of a gap number $N$, a monic polynomial 
$c(\mu)$ of degree $2N + n$, and a Nijenhuis operator $M(w)$ (companion form) a Stäckel 
integrable system on $T^{*}\mathbb{R}^{N}$ which is coupled to the BKM PDE 
through a function $\frac{c(\mu)}{m(\mu)}$ that is involved in generating the 
Hamiltonians. The procedure will be described through a reduction map $
\mathcal{R}: \R^N \rightarrow \R^n$ which sends a solution $w(x,t)$ of the 
joint flow, given in companion coordinates, to a solution $(u_1(x,t), \dots, 
u_n(x,t))$ of our BKM system in consideration. It can be defined via 
algebraic relations (assuming $L$ in companion form).

We give an implicit definition of the map $\mathcal{R}$. Denote by $\sigma_u(\mu) = \det(\mu\, \mathrm{Id} - L(u))$ and $\sigma_w(\mu) =  \det(\mu\, \mathrm{Id} - M(w))$ the characteristic polynomials of $L, M$, respectively. We 
consider the following polynomial which is of degree at most $2N+n-1$ (the highest order term cancels):
\begin{equation}
    \sigma_u(\mu) \cdot \sigma_w(\mu)^2 - c(\mu).
\end{equation}
In order to define $\mathcal{R}$ we impose the following two conditions:
\begin{itemize}
    \item  The polynomial must be divisible by $m(\mu)$. 
    \item The resulting polynomial (after division) is of degree at most $2N-1$.
\end{itemize}
The above conditions give us a set of $n$ equations involving $u$ and $w$. The map $\mathcal{R}$ is then defined by resolving 
these equations for $u$ in terms of $w$.

\begin{remark}
In the paper \cite{bolsinov2025finite}, the authors constructed the finite-reduction 
map by considering two equations. The \textit{base equation} is a 2nd order ODE on the coefficients $w_i(x)$ of characteristic polynomial $\sigma_w$. The base equation plays a role analogous to a Sturm--Liouville operator; for KdV it is equivalent to the Schrödinger--Hill operator which is a genuine Sturm--Liouville operator. A solution $w(x)$ of the base equation can be extended to a solution $w(x,t)$ through a quasi-linear PDE system of hydrodynamic type, that is, $w_t = A(w) w_x$. The solution $w(x,t)$ is a solution of the joint flow of the Stäckel system, and determines a solution $u(x,t) = \mathcal{R}(w(x,t))$.
\end{remark}

We can interpret the condition in terms of the roots of the polynomial $m$. If $\lambda$ 
is a simple root of $m$, the condition is that $\sigma_u(\lambda) \cdot 
\sigma_w(\lambda)^2 - c(\lambda) = 0$. For a multiple root of multiplicity $l$, the 
corresponding derivatives (up to order $(l-1)$) with respect to $\mu$ of the polynomial $
\sigma_u(\mu) \cdot \sigma_w(\mu)^2 - c(\mu)$ also have to vanish. All of this also 
applies to roots at infinity as well. In 
order to handle the equations we change variables $\mu = \frac{1}{\lambda}$ and multiply 
the resulting equation by $\lambda^{2N+n-2}$ and then consider the root $\lambda = 0$. Thus, a root at infinity will produce equations on the highest degree terms (the number depending on multiplicity).

We can use the finite-reduction map to produce finite gap solutions for the BKM system 
following \cite{bolsinov2025finite}. There is an analogous statement for BKM I and II 
(only constraint function takes a different form), but it is not required for our 
purposes. 

\begin{theorem*}[\textbf{Producing Finite Gap Solutions}\cite{bolsinov2025finite}]
Consider an $n$-component BKM system of type (III) or (IV) constructed from the
polynomial $m(\mu)$. Let $N \geq 1$ and $c(\mu)$ be the monic polynomial
\begin{equation}
    c(\mu) = \mu^{2N+n} + \sum_{i= 2}^{2N + n} c_{i}\ \mu^{2N + n - i }.
\end{equation}
Denote by $F = (H_1, \dots, H_N): T^{*}\mathbb{R}^N \rightarrow \mathbb{R}^N$ 
the integrable Stäckel system determined by $(N, c(\mu), m(\mu), n)$. 
Consider the joint flow $(x,t) \mapsto \phi_1^x \circ \phi^t_2$ of the two 
commuting Hamiltonians $H_1, H_2$, and let $X \subseteq T^{*}\mathbb{R}^N$ be 
the level set defined by the vanishing of all the Hamiltonians 
\begin{equation}\label{def_common_level_set}
    X  := F^{-1}(0) = \{H_1 = 0,\ \dots,\ H_N = 0 \}.
\end{equation}
If $w(x,t)$ is a solution of the joint flow on the level $X$, then
\begin{equation}\label{def_finite_gap_solution}
    \begin{split}
        u(x,t ) & = \mathcal{R}(w(x,t)) \\
        q(x,t) & = w_1(x,t).
    \end{split}
\end{equation}
defines a solution of the BKM system. (NB: Here the constraint $q(x,t)$ is only relevant for BKM III.) We call $u = \mathcal{R}(w)$ a \textup{finite-gap solution}. \\
For the case of a BKM system of type I or II, the time flow is generated by a 
Hamiltonian which is a linear combination of the Hamiltonians $H_1, \dots, H_N$ (see \cite[~Thm.2.2.]{bolsinov2025finite}). If $w(x,t)$ is a solution of the joint flow 
on the level $X$ and $a = \sqrt{c(\lambda)}$, then
\begin{equation}
    \begin{split}\label{def_finite_gap_solution_I_and_II}
        u(x,t) & = \mathcal{R}(w(x,at)) \\
        q(x,t) & = \frac{1}{a} \sigma_{w(x, at)}(\lambda)
    \end{split}
\end{equation}
defines a solution of the BKM system. (NB: Here the constraint $q(x,t)$ is only relevant for BKM I.) We call $u = \mathcal{R}(w)$ a \textup{finite-gap solution}.
\end{theorem*}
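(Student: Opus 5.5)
The natural route is through separation of variables, since the Stäckel system becomes explicitly solvable in the eigenvalue coordinates $q^1,\dots,q^N$ of $M$, the roots of $\sigma_w(\mu)$. I would first rewrite the Hamiltonians $H_1,\dots,H_N$ in coordinates $(q^i,p_i)$. Stäckel separability gives the separation relations: on the level set $X=F^{-1}(0)$ we have $p_i^2=f(q^i)=c(q^i)/m(q^i)$. Hamilton's equations for $H_1$ (the $x$-flow) and $H_2$ (the $t_\infty$-flow) then take Dubrovin form,
\[
q^i_x=\frac{2p_i}{\prod_{j\neq i}(q^i-q^j)},\qquad q^i_{t}=\frac{2p_i\,\bigl(\textstyle\sum_{j\ne i}q^j\bigr)}{\prod_{j\neq i}(q^i-q^j)}\quad(\text{up to sign and normalisation}).
\]
More generally, the flow of $H(\lambda)=\sum_i H_i\lambda^{N-i}$ carries the factor $\sigma_w(\lambda)/(\lambda-q^i)$ in place of $1$. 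These are the building blocks for everything that follows.

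Next I would identify the reduction map on $X$ through the roots of $\sigma_w$. Write $\sigma_u\sigma_w^2-c=m\,P$ with $\deg P\le 2N-1$. Evaluating at $\mu=q^i$ gives $P(q^i)=-f(q^i)=-p_i^2$. Together with the $n$ defining conditions of $\mathcal R$, this fixes $\sigma_u$ as an explicit rational expression in $\sigma_w$ and the $p_i$. The key claim I would then prove is an identity for the generating function
\[
q(x;\lambda):=\frac{\sigma_{w(x)}(\lambda)}{\sqrt{c(\lambda)}}\quad\text{on }X:
\qquad
m(\lambda)\Bigl(q_{xx}q-\tfrac12q_x^2\Bigr)+\sigma_u(\lambda)\,q^2=1 .
\]
This is the analogue of the classical "squared eigenfunction" (Gelfand–Dickey resolvent) identity for KdV. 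To prove it, I would express $\partial_x\sigma_w(\lambda)=-\sum_i q^i_x\,\sigma_w(\lambda)/(\lambda-q^i)$ via the Dubrovin equations. I would compute $q_{xx}q-\tfrac12q_x^2$ by Lagrange interpolation at the nodes $q^i$. Both sides are polynomials in $\lambda$ of controlled degree after multiplying by $c(\lambda)$, and they agree at the $N$ nodes, where $p_i^2=f(q^i)$ is used. They also agree in the top coefficients and at the roots of $m$, which is exactly where the two defining conditions of $\mathcal R$ (divisibility by $m$ and the degree bound) enter. This step is the one I expect to be the main obstacle: the bookkeeping of degrees, and of multiple roots of $m$ (including a root at infinity), has to match exactly the number of conditions imposed in defining $\mathcal R$.

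With this identity in hand, the constraint equation of BKM I holds with $q=\sigma_w(\lambda)/a$, where $a=\sqrt{c(\lambda)}$. Expanding at $\lambda=\infty$ and comparing the leading coefficients gives the BKM III constraint $2q+m_nq_{xx}=\mathrm{tr}L$ with $q=w_1$, and the BKM IV relation $q=\tfrac12\mathrm{tr}L$. At a root of $m$ the identity degenerates to $q=1/\sqrt{\sigma_u(\lambda)}$, which is BKM II.

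For the evolution equation I would differentiate $\sigma_u$, obtained from $\mathcal R$, along the $t_\lambda$-flow. This flow is generated by $H(\lambda)$ rescaled by $a$, hence the time $at$. I would use the commuting Dubrovin equations to write $\partial_{t_\lambda}\sigma_w(\mu)$ in the standard form $\frac{1}{\lambda-\mu}\bigl(q(\lambda)\partial_x\sigma_w(\mu)-q(\mu)\ldots\bigr)$ familiar from Mumford systems. Finally I would translate the result into $u$-coordinates via $(L-\lambda\mathrm{Id})^{-1}\zeta$ and $(L-\lambda\mathrm{Id})^{-1}u_x$. In companion coordinates these have explicit generating-function descriptions:
\[
\bigl((L-\lambda)^{-1}v\bigr)\ \leftrightarrow\ \frac{\sigma_u(\mu)\,v(\lambda)-\sigma_u(\lambda)\,v(\mu)}{\mu-\lambda}\ \text{type formulas},
\]
and $\zeta=m(L)\zeta_0$ corresponds to $m(\mu)$. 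Matching coefficients in $\mu$ then reduces the PDE to the identity already proved, differentiated once in $x$. The cases $\lambda=\infty$ follow as the leading coefficients of the same expansion, which gives types III and IV.
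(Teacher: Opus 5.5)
The paper does not prove this statement. It is quoted from \cite{bolsinov2025finite}, whose argument (per the Remark after the definition of $\mathcal R$) goes through the base equation in companion coordinates and its hydrodynamic-type extension $w_t=A(w)w_x$. Your separation-of-variables route therefore has to stand on its own, and much of it does. The $\lambda^{2N+n-1}$ coefficient of your identity gives $m_n w_{1,xx}+u_1+2w_1=0$, which is the III/IV constraint with $q=w_1$. The evolution step also works as you predict. Set $U(\mu):=\sigma_w(\mu)$, $q=q(\lambda)$, and normalise the $t_\lambda$-flow so that $(\mu-\lambda)\partial_t U(\mu)=q\,U(\mu)_x-U(\mu)\,q_x$. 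Differentiate $m(\mu)(UU_{xx}-\tfrac12U_x^2)+\sigma_u(\mu)U^2=c(\mu)$ in $t$, and eliminate $m(\mu)U_{xxx}$ using the $x$-derivative of the same identity. This gives $(\mu-\lambda)\partial_t\sigma_u(\mu)=m(\mu)q_{xxx}+2\sigma_u(\mu)q_x+\sigma_u(\mu)_x\,q$. That is the BKM flow in generating form, as you see from $d\sigma_u(\mu)\circ(L-\mu\,\mathrm{Id})=-\sigma_u(\mu)\,du_1$, $d\sigma_u(\mu)(\zeta)=m(\mu)-m_n\sigma_u(\mu)$ and the $x$-derivative of the constraint.

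The genuine gap is the step you flag as the main obstacle: the condition count does not close.

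\begin{itemize}
\item By the definition of $\mathcal R$, $\sigma_u U^2-c=mP$ with $\deg P\le 2N-1$. Your identity is therefore equivalent to $E:=UU_{xx}-\tfrac12U_x^2+P\equiv0$, a polynomial in $\lambda$ of degree $\le 2N-1$, which is $2N$ conditions.
\item Agreement in the top coefficients and at the roots of $m$ adds nothing. It is exactly what defines $P$, equivalently $\sigma_u=\mathcal R(w)$, which involves no $p_i$ at all.
\item Interpolation at a fixed $x$ therefore supplies only the $N$ node conditions $E(q^i)=0$, i.e.\ $\tfrac12U_x(q^i)^2=-f(q^i)$. This is where the Dubrovin equations and the separation relations enter, and it also pins down the normalisation constants you left open.
\end{itemize}

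So half the conditions are missing. The missing idea is to use that the node relations hold for all $x$, and to differentiate once.

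\begin{itemize}
\item Write $E=UQ$ with $\deg Q\le N-1$. Then $E_x=U_xQ+UQ_x$.
\item Directly, $E_x=UU_{xxx}+P_x$. Since $mP_x=(\sigma_u)_xU^2+2\sigma_uUU_x$ vanishes at every $q^i$, you get $P_x(q^i)=0$ (generically $m(q^i)\neq0$).
\item Comparing at $\lambda=q^i$ gives $U_x(q^i)\,Q(q^i)=0$.
\item Now $U_x(q^i)=-q^i_x\prod_{j\neq i}(q^i-q^j)\neq0$ wherever $f(q^i)\neq0$. So $Q$ has $N$ distinct roots, hence $Q\equiv0$.
\item The identity then extends to all of $X$ by continuity.
\end{itemize}

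With this step inserted, the rest of your plan goes through.
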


The previous theorem requires working on the common level set of the 
Hamiltonians. For our purposes, it will be convenient to work on the entire 
cotangent bundle $T^{*}\mathbb{R}^N$ as this means we have more free parameters 
(the initial conditions $w(0),p(0)$) at our disposal to approximate the 
initial data. To this end, suppose that the polynomial $c(\mu)$ is of the 
following special form
\begin{equation}\label{eq_special_c_polynomial}
    c(\mu) := \mu^{2N+n} + m(\mu) d(\mu) +r(\mu) = \mu^{2N+n} + m(\mu) \sum_{i=1}^{N} d_i \mu^{N-i} +r(\mu),
\end{equation}
i.e. $d(\mu)$ is an arbitrary polynomial of degree at most $N-1$ and $r(\mu)$ is a polynomial of degree strictly less than $\deg(m)$. The kinetic 
energy does not depend on the constants $d$, but the potentials do depend on 
the constants $d$ via the function $f(\mu) = \frac{c(\mu)}{m(\mu)}$ determining 
the potential cf. \eqref{def_potentials}. Now, let $\tilde{f}(\mu) := 
\frac{\mu^{2N+n}}{m(\mu)}$, $\tilde{r}(\mu):= \frac{r(\mu)}{m(\mu)}$ and denote by $\tilde{H}_i$ the Hamiltonians with 
respect to the function $\tilde{f} +\tilde{r}$. Then we obtain that
\begin{equation}\label{eq_shifted_hamiltonian}
    H_i = \tilde{H}_i + d_i.
\end{equation}
Moreover, the constants $d_1, \dots, d_N$ do not appear in the Hamiltonian 
equations corresponding to both $H_i$ and $\tilde{H}_i$. In particular, the $\tilde{H}_i$ are integrals of motion for $F = (H_1, \dots, H_N)$. Thus, if we have an arbitrary initial condition $w(0), p(0)$ and set $d_i := - \tilde{H}_i(w(0), p(0))$, the solution $(w(x,t), p(x,t))$ for this initial condition will lie on the common level set $X$ \eqref{def_common_level_set}. We formulate this as a corollary.

\begin{corollary}\label{cor:fin_gap_entire_cotangent_bundle}
Suppose we are in the setting of the above theorem. For any initial condition $(w(0), p(0)) \in T^{*}\mathbb{R}^N$, consider the polynomial $c(\mu)$ as in \eqref{eq_special_c_polynomial}, where we set $d_i := -\tilde{H}_i(w(0), p(0))$ for $i = 1, \dots, N$. Given any initial condition, let $w(x,t)$ be the solution of the joint Hamiltonian flow generated by $H_1$ and the auxiliary Hamiltonian. Then the pair of functions $u(x,t),\ q(x,t)$ as in \eqref{def_finite_gap_solution} or \eqref{def_finite_gap_solution_I_and_II} defines a finite-gap solution of the BKM system. (The function $q(x,t)$ is only 
relevant for BKM type I or III.)
\end{corollary}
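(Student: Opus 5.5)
The corollary follows from the theorem on producing finite-gap solutions, once we check that the shifted $c(\mu)$ puts the trajectory through $(w(0),p(0))$ on the zero level set $X$. The plan has three steps, carried out in this order.

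\textbf{Step 1: the $d_i$ only shift the potentials.} I will verify \eqref{eq_shifted_hamiltonian}, namely $H_i=\tilde H_i+d_i$. The kinetic energies \eqref{def_kinetics} do not depend on $c$. For the potentials, with $c$ of the form \eqref{eq_special_c_polynomial} we have
\[
f(\mu)=\frac{c(\mu)}{m(\mu)}=\tilde f(\mu)+\tilde r(\mu)+d(\mu).
\]
The map $g\mapsto g(M)$ is linear, so $f(M)=(\tilde f+\tilde r)(M)+d(M)$. Since $d$ is a polynomial of degree at most $N-1$, we get $d(M)=\sum_{i=1}^N d_i M^{N-i}$, and this is already written in the basis $M^{N-1},\dots,\mathrm{Id}_N$. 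This basis is linearly independent because $M$ is gl-regular, so the coefficients in \eqref{def_potentials} are unique. Reading them off gives $V_i=\tilde V_i+d_i$.

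\textbf{Step 2: the $d_i$ do not affect the dynamics.} The $d_i$ are constants on $T^*\mathbb{R}^N$. Hence $X_{H_i}=X_{\tilde H_i}$, and the joint flow of $H_1$ and the time Hamiltonian ($H_2$, or the relevant linear combination of the $H_j$ for types I/II) does not depend on the $d_i$. The $H_i$ Poisson-commute, as stated in the background section, and so do the $\tilde H_i=H_i-d_i$. Therefore each $\tilde H_i$ is constant along that joint flow.

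\textbf{Step 3: the trajectory lies on $X$.} Fix $(w(0),p(0))$ and set $d_i:=-\tilde H_i(w(0),p(0))$. This gives a monic polynomial $c$ of degree $2N+n$. Its $\mu^{2N+n-1}$ coefficient vanishes, as the theorem requires, provided $\deg(m\,d)\le 2N+n-2$ and $\deg r<\deg m$. This holds when $\deg m\le n-1$ or when the degrees allow it. The one place needing care is that $\deg(m d)$ could reach $2N+n-1$ only if $\deg m=n+N$, which is impossible since $\deg m\le n$ and $N\ge1$ give $\deg m+N-1\le n+N-1\le 2N+n-2$. By Step 2, along the solution
\[
H_i(w(x,t),p(x,t))=\tilde H_i(w(0),p(0))+d_i=0 .
\]
So the solution lies on $X=F^{-1}(0)$, and the theorem applies. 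It gives that \eqref{def_finite_gap_solution} (types III/IV) or \eqref{def_finite_gap_solution_I_and_II} (types I/II) is a finite-gap solution.

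\textbf{Main obstacle.} The only subtle point is that the Stäckel system, and hence $\mathcal{R}$, depends on $c$ through the $d_i$. One must check that the hypotheses of the theorem are for this fixed $c$, chosen after the initial point. That is legitimate, since the theorem holds for every admissible monic $c$.
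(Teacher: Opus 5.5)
Your proposal is correct and follows the paper's own argument: the shift $H_i=\tilde H_i+d_i$, the observation that the constants $d_i$ do not enter the Hamiltonian equations so the $\tilde H_i$ are conserved along the joint flow, and the choice $d_i=-\tilde H_i(w(0),p(0))$, which places the trajectory on $X=F^{-1}(0)$. Your extra checks (uniqueness of the coefficients in the basis $M^{N-1},\dots,\mathrm{Id}$, and vanishing of the $\mu^{2N+n-1}$ coefficient of $c$) are fine, though one remark in your ``main obstacle'' paragraph is inaccurate but harmless: $\mathcal{R}$ does not depend on the $d_i$, since $m\,d$ is divisible by $m$ with quotient $d$ of degree at most $N-1<2N$.
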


\begin{remark}
Equivalently, one may consider the constants $d_i$ as Casimirs on $T^{*}\mathbb{R}^N 
\times \mathbb{R}^N(d_1, \dots, d_N)$ and consider the leaf that passes through the 
chosen initial condition.    
\end{remark}

\subsection{Jets of curves}

We briefly recall the notion of jets. We shall use this in the next section to formulate the problem of jet approximation more precisely. Let $f: I \rightarrow \mathbb{R}^{m}$ be a smooth curve. The $k$-jet of $f$ at $x = a$ consists of all its derivatives at $a$ up to order $k$, and is denoted by $j_a^kf$:
\begin{equation}
    j^k_af = (f(a), f'(a), f''(a), \dots, f^{(k)}(a)),
\end{equation}
which may be identified with the $k$th order Taylor expansion of the curve at $a$. The space of all $k$-jets of curves $\mathbb{R} \rightarrow \mathbb{R}^m$ 
forms the $k$th-order jet bundle $J^k(\mathbb{R}, \mathbb{R}^m)$. Given coordinates $(y^i)$ on $\mathbb{R}^m$, we 
obtain coordinates $(x, y^i_j)$ on $J^k(\mathbb{R}, \mathbb{R}^m)$ where $y^i_j$ agrees with the derivative $\frac{d^j y^i}{dx^j}$ when evaluated on a curve $x \mapsto y(x)$. We define the total derivative $D_x$ as the vector field 
\begin{equation}
    D_x = \frac{\partial}{\partial x} + \sum_{i,j} y^i_{j+1} \frac{\partial }{\partial y^i_{j}},
\end{equation}
which has the property that $D_x(u_j) = u_{j+1}$ for all $j$.

The \textit{$k$th jet-prolongation} of $f$ is the section $I \rightarrow J^k$ defined by $x 
\mapsto j_x^kf$. Similarly, given a smooth map $F: M \rightarrow N$ between manifolds $M,N$ 
we get induced maps $J^k(\mathbb{R}, M) \rightarrow J^k(\mathbb{R}, N)$ defined through $j^kF: j^k_af \mapsto j^k_a(F \circ f)$.

The space of infinite jets $J^{\infty}$ is an infinite-dimensional manifold defined as the 
inverse limit of $J^k$ under the projections $\pi_{k, k-1}$, each $\infty$-jet consists of 
all derivatives of a curve at a point. However, the analogy between $\infty$-jets and 
Taylor expansions breaks down, because not every $\infty$-jet gives rise to a convergent 
Taylor expansion.

An ODE of order $k$ is a submanifold $\mathcal{E} \subseteq J^k$. Locally, the ODE is given 
by the vanishing of a jet-function $F(x, y^i_{j})$. We define its \textit{first 
prolongation} $\mathcal{E}^{(1)} \subseteq J^{k+1}$ to be the ODE of order $k+1$ given by 
\begin{equation}
    \mathcal{E}^{(1)} := \{F(x,u^i_j) = 0, (D_xF)(x,u_j^i) = 0 \},
\end{equation}
i.e., the first prolongation is obtained by adding to the original ODE the differentiated 
equations. Higher prolongations are defined inductively via $\mathcal{E}^{(l)} := 
(\mathcal{E}^{(l-1)})^{(1)}$. A point $z = j^k_af \in \mathcal{E}^{(l)}$ may be viewed as a solution up to order $k + l$ at $x = a$.

\section{Formulation of Problem}
\label{sec:FormulationProblem}

\subsection{Triangular form of finite-reduction map for $\deg(m) = 0$ (e.g. KdV, KB)}

For the case with polynomial $\deg(m) = 0$, we show that the finite 
reduction map has a triangular structure in terms of the companion 
coordinates $w^i$. An important consequence of this is that 
approximating $(u_1, \dots, u_n)$ is equivalent to approximating $(w_1, 
\dots, w_n)$. As we 
increase the gap number $N \rightarrow \infty$, we obtain more initial 
conditions and can approximate $(w_1, \dots, w_n)$ up to increasingly 
greater order. We will make this more precise in subsection 
\cref{subsec:formulation_approximating}.

\begin{prop}[\textbf{Form of the reduction map}] \label{prop:ReductionMap}
Consider a BKM PDE with $\deg(m) = 0$ and consisting of $n$ 
components. For a finite-gap solution $u = \mathcal{R}(w)$ we have a triangular 
structure in the sense that $u_i$ depends on $w_1, \dots, w_i$. Additionally, each 
$u_i$ depends linearly on $w_i$.
\end{prop}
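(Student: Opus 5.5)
With $\deg(m)=0$, $m$ is a nonzero constant $m_0$, so the divisibility condition is automatic. The only defining condition of $\mathcal{R}$ is then the degree condition: the polynomial
\[
P(\mu):=\sigma_u(\mu)\,\sigma_w(\mu)^2-c(\mu)
\]
must have degree at most $2N-1$. The plan is to expand $P$ in powers of $\mu$ and read off the coefficients of $\mu^{2N+n-1},\dots,\mu^{2N}$. These are exactly $n$ equations. We then solve them recursively for $u_1,\dots,u_n$.

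First, write out the two characteristic polynomials. Since $L$ and $M$ are in companion form,
\[
\sigma_u(\mu)=\mu^n+u_1\mu^{n-1}+\dots+u_n,\qquad
\sigma_w(\mu)=\mu^N+w_1\mu^{N-1}+\dots+w_N .
\]
Put $u_0=w_0=1$ and $w_j=0$ for $j>N$. Then
\[
\sigma_w(\mu)^2=\sum_{k\ge 0} s_k\,\mu^{2N-k},\qquad s_k=\sum_{a+b=k}w_aw_b .
\]
In particular $s_0=1$, and $s_k=2w_k+(\text{a polynomial in } w_1,\dots,w_{k-1})$. For $k\le n$ and $N\ge n$ the term $w_k$ genuinely appears. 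If $N<n$, we instead use the truncation convention $w_j=0$, and the statement is read with those $w_j$ absent.

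Next, compute the coefficient of $\mu^{2N+n-i}$ in $\sigma_u\sigma_w^2$ for $1\le i\le n$. It is
\[
\sum_{j+k=i}u_js_k \;=\; u_i+\sum_{j=0}^{i-1}u_js_{i-j}.
\]
Setting this equal to the corresponding coefficient of $c(\mu)$ gives the $i$-th equation. Here $c_1=0$, and $c_i$ is a constant otherwise. The $i$-th equation therefore reads
\[
u_i=c_i-s_i-\sum_{j=1}^{i-1}u_js_{i-j}.
\]

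We finish by induction on $i$. Suppose $u_1,\dots,u_{i-1}$ are already known to be polynomials in $w_1,\dots,w_{i-1}$ only. The right-hand side of the $i$-th equation involves $s_1,\dots,s_i$, which depend only on $w_1,\dots,w_i$. So $u_i$ depends only on $w_1,\dots,w_i$, which gives the triangular structure.

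For linearity in $w_i$, observe that $w_i$ enters only through $s_i$. The other terms $s_{i-j}$ with $j\ge1$ involve only $w_1,\dots,w_{i-1}$, and $u_j$ with $j<i$ also avoids $w_i$ by induction. Within $s_i$, the variable $w_i$ appears only in the terms $2w_0w_i=2w_i$, since the products $w_aw_b$ with $a,b\ge1$ have $a,b<i$. Hence
\[
u_i=-2w_i+F_i(w_1,\dots,w_{i-1}),
\]
which is linear in $w_i$ with nonzero coefficient $-2$.

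There is no real obstacle in this argument. The only care needed is in the bookkeeping: the coefficient index range, the normalisation $c_1=0$, and the edge case $N<n$.
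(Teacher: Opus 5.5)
Your proof is correct and follows essentially the same route as the paper: for constant $m$ the divisibility condition is vacuous, and the top $n$ coefficients of $\sigma_u\sigma_w^2-c$ give equations in which $u_i$ appears with coefficient $w_0^2=1$, which are then solved inductively. Grouping $\sigma_w^2$ into the $s_k$ is slightly cleaner bookkeeping than the paper's triple sum. It makes explicit that $u_i$ depends only on $w_1,\dots,w_i$ and that the coefficient of $w_i$ is $-2$, whereas the paper's proof only asserts dependence on $w_1,\dots,w_n$ with $w_i$ entering linearly.
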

\begin{proof}
For the definition of the map $\mathcal{R}$ we refer to Section~\ref{subsect_fin_red}. 
As $m$ is a nonzero constant, we only have roots at infinity, and hence the condition 
on the divisibility by $m$ is always satisfied. Thus, we have to consider the
degree conditions for $\sigma_u(\mu) \cdot \sigma_w(\mu)^2 - c(\mu)$. This could be a 
polynomial of degree $2N+n-1$ (note that the polynomials are monic and hence the terms of 
degree $2N+n$ cancel) and the condition is that all coefficients of order $\geq 2N$ vanish. 
Defining $w_0 :=1, u_0 := 1$, we obtain the following compact formula for $\sigma_u(\mu) \cdot \sigma_w(\mu)^2$:
\begin{equation}
    \sigma_u(\mu) \cdot \sigma_w(\mu)^2 = \sum_{i=0}^{2N+n} \left(\sum_{j+k+l=i} u_j w_k w_l \right) \mu^{2N+n-i},
\end{equation}
where the inner sum runs over all triples $(j,k,l)$ satisfying $j + k + l = i$ with $0 \leq 
j \leq n,\ 0 \leq k,l \leq N$. So for each $i \in \{1, \ldots, n\}$ we must have $
\sum_{j + k + l = i} u_j w_l w_k 
= c_{i}$. \\
From the formula it is immediately apparent that only $u_1, \ldots, u_i$ are involved, where 
$u_i$ enters linearly (as $w_0=1$). Additionally, only $w_1, \ldots, w_n$ can appear. Via an 
inductive argument (starting from $i=1$) we see that each $u_i$ is expressed as a polynomial 
in $w_1, \ldots, w_n, c_1, \ldots, c_n$ where $w_i$ and $c_i$ appear linearly.
\end{proof}

\subsection{Approximating initial data of BKM by finite gap solutions} \label{subsec:formulation_approximating}

By the Cauchy--Kovalevskaya theorem, any analytic initial data $x \mapsto u(x)$
locally determines a solution $u(x,t)$ for the BKM PDE. A finite-gap solution $u = \mathcal{R}(w)$ is a solution of the BKM PDE. It is natural to consider whether finite-gap solutions are dense in the space of all solutions of the PDE (endowed with a suitable topology). Here we approach this problem from the viewpoint of jets, that is, whether we can obtain the Taylor expansion of $u(x)$ up to arbitrary order from finite-gap solutions. This leads us to the following definition.

\begin{definition}[\textbf{Approximation by finite-gap solutions}]
We say that the initial data $u(x)$ can be \textit{approximated up to order $k$} (around $x = 0$) by a finite-gap solution if there exists a finite-gap solution $\mathcal{R}(w)$ with the same $k$-jet at $x = 0$.
\end{definition}

For the PDEs considered in the previous section, we note that 
the $k$-jet $j^k_0u$ of a finite-gap solution $u = \mathcal{R}(w)$ is determined by the $k$-jets $j^k_0w_{i_1}, \dots, j^k_0w_{i_s}$, where $s$ is a fixed integer independent of the gap number $N$. Thus the problem of approximating the initial data up to order $k$ reduces to the following problem, which is formulated solely in terms of the Hamiltonian flow.

\begin{oframed}
\textbf{Problem. (Approximating jets of curves by Hamiltonian flow.)}\\
Given an arbitrary $k$-jet $(j^k_0w_{i_1}, \dots, j^k_0w_{i_s})$, 
determine whether there exists a gap number $N$ and an initial condition 
$(w(0), p(0))$ such that the $k$-jet at $x = 0$ of the corresponding Hamiltonian
solution $x \mapsto (w_{i_1}(x), \dots, w_{i_s}(x))$ agrees with the prescribed $k$-jet.\end{oframed} 
\vspace{0.2cm}

We now reinterpret this problem in terms of the map on higher-order jet bundles induced by the Hamiltonian flow. To this end, consider the Hamiltonian $H_1$ (\ref{eq:Hamiltonians}) 
on 
$T^{*}\mathbb{R}^N = \mathbb{R}^{2N}$ and denote its flow by $\phi_{1}^x$. The Hamiltonian 
equations
\begin{equation}
    (w_i)_x = \frac{\partial H_1}{\partial p_i},\ (p_i)_x = -\frac{\partial H_1}{\partial w_i}
\end{equation}
define a first-order ODE $\mathcal{E} \subseteq J^1(\mathbb{R}, \mathbb{R}^{2N})$ in the 
first-order jet bundle. By differentiating the equations with respect to $x$ repeatedly, we 
obtain expressions for all derivatives of $w_i$ and $p_i$ up to order $k$. We collect all 
these equations to form the $(k-1)$-th prolongation of the Hamiltonian equations, which we 
denote by
\begin{equation}
    \mathcal{E}_{\text{Ham}}^{(k-1)} \subseteq J^k(\mathbb{R}, \mathbb{R}^{2N}).
\end{equation}
A point on $\mathcal{E}_{\text{Ham}}^{(k-1)} \subseteq J^k$ is given by the $k$-jet of a 
Hamiltonian solution. For our examples, the corresponding Taylor coefficients depend 
rationally on the initial conditions $(w(0), p(0))$. Thus, the Hamiltonian flow induces a smooth map
\begin{equation}
    j^k_{\text{Ham}}: \mathbb{R} \times \mathbb{R}^{2N} \rightarrow J^k(\mathbb{R}, \mathbb{R}^{2N}),\ (x_0, (w, p)) \mapsto j^k_{x_0}[x \mapsto \phi^{x-x_0}_1(w,p)]
\end{equation}
whose image is by definition $\mathcal{E}^{(k-1)}_{\text{Ham}}$, and assigns to 
an initial condition the $k$-jet of the corresponding Hamiltonian 
solution. If we fix a basepoint $x_0$, the fiber over $x_0$ in $J^k(\mathbb{R}, \mathbb{R}^{2N})$ has dimension $2N(k+1)$, while there are $2N$ initial conditions (and possibly 
additional parameters). In view of these dimensions, we can therefore only hope to approximate a 
fixed number of position coordinates up to order $k$. We introduce the following notion of $k$-jet surjectivity, which means that arbitrary $k$-jets of the selected position components $w_I :=(w_{i_1}, \dots, w_{i_s})$ are realized by the Hamiltonian flow.

\begin{definition}[\textbf{$k$-jet surjectivity and jet-density}]
A tuple of position components $w_{I} := (w_{i_1}, \dots, w_{i_s})$ satisfies $k$\textit{-jet surjectivity} with respect to the Hamiltonian flow if the map
\begin{equation}
    j^k\text{pr}_{I} \circ j^{k}_{\mathrm{Ham}} : \mathbb{R} \times
    \mathbb{R}^{2N} \longrightarrow J^k(\mathbb{R}, \mathbb{R}^s)
\end{equation}
is surjective, where $\text{pr}_{I} : \mathbb{R}^{2N}(w,p) \to \mathbb{R}^s(w_{i_1}, \dots, 
w_{i_s})$ denotes the canonical projection. The tuple $w_I$ satisfies \textit{jet-density} if it is $k$-jet surjective for all $k \in \mathbb{N}$.
\end{definition}

Therefore the problem is equivalent to establishing $k$-jet surjectivity for 
suitable position components $w_I$.

By translational invariance of the Hamiltonian flow along the $x$-parameter, it suffices to check surjectivity of $k$-jets over the basepoint $x = 0$. 

\subsection{Examples for KdV, Kaup--Boussinesq}\label{subsec:Examplesm=1}

We consider first two classical examples, namely the Korteweg--de Vries 
(KdV) equation and the Kaup--Boussinesq (KB) equation. These examples 
illuminate how to prove the result for all BKM systems with $\deg(m)=0$ up 
to any order.

Since we are working on the level set $X^N$, where all Hamiltonians vanish, we can pick 
$N$ initial 
conditions among $w_1(0), \dots, w_N(0), p_1(0), \dots, p_N(0)$. The other $N$ 
coordinates are then expressed through the Hamiltonians. Alternatively, following the 
approach in subsection \ref{subsect_fin_red} we can actually use $2N$ initial conditions by a suitable choice of $c(\mu)$. We take the latter approach in the 
proof of the general case, but for illustration purposes we work with $N$ initial 
conditions here. We also put the constants $c_2, \dots, c_N$ equal to zero.

\subsubsection{Example: KdV $(n=1)$  }

We have $n = 1$. The KdV equation is given by
\begin{equation}
    u_t = -\frac12 u_{xxx} + \frac32 u u_x
\end{equation}
with reduction map $\mathcal{R}: \mathbb{R}^N(w) \rightarrow \mathbb{R}(u)$ given by
\begin{equation}
    u = - 2 w_1.
\end{equation}
In particular, the form of the reduction map is independent of the polynomial $c$, but the polynomial $c$ does appear in the equations of motion. In order to 
approximate $u$ it suffices to approximate $w_1$. The higher-order coefficients of $w_1$ are determined through Hamiltonian equations, i.e., $j^kw_1 \in \mathcal{E}_{\text{Ham}}$. We compute the Taylor expansions of $w_1$ at $x = 0$ for some choices of $N$.
Let us start with $N = 4$:
\begin{equation}
    \begin{split} 
        w_1(x) &  = w_1(0) - p_4(0) x + \left(-\frac32 w_1(0)^2 + w_2(0)\right) x^2 
        + \left(\frac{2}{3} p_4(0) w_1(0) -\frac13 p_3(0)\right) x^3 + \mathcal{O}(x^4)
    \end{split}
\end{equation}
We see that we can approximate $w_1$ up to order $3$ by choosing the initial conditions $w_1(0), p_4(0), w_2(0), p_3(0)$, respectively. Note also that for each order the chosen initial condition does not appear in any lower-order terms. For example, when we look at the second-order expression $\left(-\frac32 w_1(0)^2  + w_2(0)\right)$, then $w_1(0)$ is expressed but we can always set it 
to any value by choosing $w_2(0)$ suitably.

Next, we look at $N = 5$:
\begin{equation}
    \begin{split}
        w_1(x) &  = w_1(0) - p_5(0) x + \left(-\frac32 w_1(0)^2  + w_2(0) \right) x^2 
        + \left(\frac{2}{3} p_5(0) w_1(0) -\frac13 p_4(0)\right) x^3 + \\
        & + \left(\frac{5}{6} w_1(0)^{3}  -\frac{5}{24} p_5(0)^{2}-\frac{5}{6}w_1(0) w_2(0) + \frac{1}{6} w_3(0) \right) x^4 +\mathcal{O}(x^5).
    \end{split}
\end{equation}
In this case, we can approximate $w_1$ up to order $4$ by choosing initial conditions $w_1(0), p_5(0), w_2(0), p_4(0), w_3(0)$. 
Finally, let us also look at $N = 6$:
\begin{equation}
    \begin{split}
        w_1(x) &  = w_1(0) - p_6(0) x + (\dots + w_2(0)) x^2 + (\dots - \frac{1}{3} p_5(0)) x^3 + (\dots + \frac16 w_3(0)) x^4 + (\dots - \frac{1}{30} p_4(0)) x^5 + \mathcal{O}(x^6)
    \end{split}
\end{equation}
Here we use dots to indicate that the expression depends on the coordinates appearing in lower-order coordinates. Similarly as above, we can approximate $w_1(x)$ up to order 5 by choosing $w_1(0), p_6(0), w_2(0), p_5(0), w_3(0),\\ p_4(0)$. 

\subsubsection{Example: Kaup--Boussinesq $(n = 2)$}

The KB equation is a 2-component system given by
\begin{equation}
    \begin{split}
        (u_1)_{t} & = (u_2)_x - \frac{3}{2} u_1 (u_1)_x  \\
        (u_2)_t & = -\frac12 (u_1)_{xxx} - u_2(u_1)_x - \frac12 u_1 (u_2)_x
    \end{split}
\end{equation}
The reduction map $\mathcal{R}: \mathbb{R}^N(w) \rightarrow \mathbb{R}^2(u)$ is 
given by
\begin{equation}
     u_1 = -2w_1,\ u_2 = 3w_1^2  - 2 w_2
\end{equation}
We see that approximating $u_1,u_2$ up to order $k$ is equivalent to approximating $w_1, w_2$ up to order $k$. 

Let us look at $N = 6$:
\begin{equation}
    \begin{split}
        w_1(x) & = w_1(0) + p_6(0) x + (\dots + w_3(0))\ x^2 + \mathcal{O}(x^3) \\
        w_2(x) & = w_2(0) + (-p_6(0) w_1(0) - p_5(0)) x + (\dots + w_4(0))\ x^2 + \mathcal{O}(x^3)
    \end{split}
\end{equation}
where dots denote terms involving variables from lower-order coefficients. We see that we can approximate $(w_1(x), w_2(x))$ up to order 2 by choosing $w_1(0), w_2(0)$ for order 0, then $p_6(0), p_5(0)$ for order 1 and $w_3(0), w_4(0)$ for order 2.

We can also look at $N = 8$:
\begin{equation}
    \begin{split}
        w_1(x) & = w_1(0) + p_8(0) x + (\dots + w_3(0))\ x^2 + (\dots - \frac13 p_6(0))\ x^3   +\mathcal{O}(x^4) \\
        w_2(x) & = w_2(0) + (-p_8(0) w_1(0) - p_7(0)) x + (\dots + w_4(0))\ x^2 + (\dots - \frac13 p_5(0))\ x^3  +\mathcal{O}(x^4)
    \end{split}
\end{equation}
Thus, we can approximate $(w_1, w_2)$ up to order $3$ by choosing the initial conditions $w_1(0),w_2(0), p_8(0), p_7(0),\\ w_3(0),w_4(0), p_6(0), p_5(0)$.

\section{Grading of variables}\label{sec:GradingVariables}

The examples from the preceding section show that there seems to be a 
triangular structure in the derivatives of $w_i$ and that at certain 
orders only some variables can appear. We will make this idea precise by 
defining a \textit{grading} of the variables (which will depend on the 
considered BKM system). We show that the Hamiltonians defining the 
Stäckel system are (weighted-)homogeneous and describe how the degree 
changes by taking derivatives. This will facilitate proving the 
solvability of the jets with respect to the initial conditions of the 
Stäckel system. 
    
\subsection{Defining gradings}
    
We now define the grading for the classes of BKM systems we will consider, namely (i) $\deg(m)=0$ and $n$ components and (ii) $m(\mu) = \mu,\, n = 1$ component. \textbf{NB:} The notions of degree and homogeneous are understood to be with respect to this grading, unless otherwise specified.
    
\begin{definition}[\textbf{Degree of a rational function in $w,p, c$}]\label{grading} \quad \\
We define the \textit{degree} of the single variables $w_i, p_i, c_j$ via
    \begin{align*}
    	&\deg(w_i) := i, & &\deg(p_i) := N+1-i+ \alpha, & & \deg(c_j) = j .&
    \end{align*}
Here, the constant $\alpha$ depends on the BKM system. For $m\equiv 1$ we choose $
\alpha = \frac{n}{2}$, for linear $m$ and $n=1$ we will choose $\alpha=0$. We extend 
the degree to sums and products in the usual way. If $f,g$ are homogeneous 
polynomials, we define
\begin{align*}
    \deg\left(\frac{f}{g}\right) := \deg(f) - \deg(g).
\end{align*}
We use the convention that the function $0$ has \textit{any} degree, so that the set of all polynomials of a given degree forms a vector space. 
\end{definition}
    
\begin{example}[\textbf{Degree of kinetic energy}] \label{ex:DegreeKineticEnergy}
    	We determine the degree of the kinetic energy $K_1$ of the Stäckel system, which does not depend on the polynomial $c(\mu)$.\footnote{Recall that $w_0=1$, which is a polynomial of degree zero.} We have that
\begin{align*}
K_1 &= \frac{1}{2}  \sum_{i=0}^{N-1} w_i \cdot \sum_{\substack{l,k \\l+k=N+1+i}} p_l p_k.
\end{align*}
This is a polynomial in $w,p$ and we can see that it is homogeneous of degree $N+1+2\alpha$. Indeed, for each $i \in \{0,1\ldots, N-1\}$ the degree of a term in the $i$th summand is:
\begin{align}
    	\deg(w_i) + \deg(p_l)+ \deg(p_k) = i +(N+1 + \alpha-l) + (N+1+\alpha-k) = N+1+2\alpha. \label{eq:DegreeKineticEnergy}
\end{align}
\end{example}
    
The following lemma explains how the degree changes if we differentiate the functions $w_i, p_i$ by using a homogeneous Hamiltonian $H$.
    
\begin{lemma}\label{lem:DegreeDerivatives}
    	Let $H=H(w_1,\ldots, w_N, p_1, \ldots, p_N)$ be a homogeneous function of degree $k$ and denote
    	\begin{align*}
    		& (w_i)_x = \frac{\partial H}{\partial p_i},& &(p_i)_x = -\frac{\partial H}{\partial w_i}.&
    	\end{align*}
    	Then we have
    	\begin{align*}
    		&\deg((w_i)_x) = k-(N+1-i+\alpha),& &\deg((p_i)_x) = k-i.&
    	\end{align*}
\end{lemma}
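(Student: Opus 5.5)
The plan is to use the basic fact that differentiating a weighted-homogeneous function with respect to a variable lowers its degree by exactly the degree of that variable. Then $(w_i)_x=\partial H/\partial p_i$ has degree $k-\deg(p_i)=k-(N+1-i+\alpha)$, and $(p_i)_x=-\partial H/\partial w_i$ has degree $k-\deg(w_i)=k-i$. So the whole statement follows from this one observation, applied once for each variable.

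First I will make precise what homogeneity of degree $k$ means for the functions at hand. For a polynomial, or a quotient $f/g$ of homogeneous polynomials as in Definition~\ref{grading}, it means the Euler-type scaling identity
\[
H(t^{1}w_1,\ldots,t^{N}w_N,\,t^{d_1}p_1,\ldots,t^{d_N}p_N)=t^{k}H(w,p),\qquad d_i:=N+1-i+\alpha ,
\]
for $t>0$ (or formally). I will also allow the constants $c_j$ to be scaled by $t^{j}$ and treat them as fixed parameters; this does not affect the argument.

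Next I differentiate the scaling identity with respect to $p_i$. The chain rule gives
\[
t^{d_i}\,\frac{\partial H}{\partial p_i}(t\cdot(w,p))=t^{k}\,\frac{\partial H}{\partial p_i}(w,p),
\]
so $\partial H/\partial p_i$ is homogeneous of degree $k-d_i$. In the same way, differentiating with respect to $w_i$ gives $t^{i}\,\partial_{w_i}H(t\cdot)=t^{k}\,\partial_{w_i}H$, so $\partial_{w_i}H$ has degree $k-i$. The sign in $(p_i)_x=-\partial H/\partial w_i$ does not change the degree.

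For polynomials one can alternatively check this term by term. Every monomial containing $p_i$ loses exactly one factor of degree $d_i$ under $\partial/\partial p_i$, and monomials without $p_i$ are annihilated. The convention that $0$ has any degree covers the degenerate cases, for example when $H$ does not depend on $p_i$. For the rational case, the quotient rule gives $(f'g-fg')/g^2$, whose numerator is homogeneous of degree $\deg f+\deg g-d_i$, so the quotient again has degree $k-d_i$.

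I do not expect a real obstacle here. The only point needing care is the well-definedness of the degree for rational functions and for the zero function, and the scaling-identity formulation handles both uniformly.
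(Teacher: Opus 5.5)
Your argument is correct: the weighted scaling identity, or equivalently the term-by-term and quotient-rule check, shows that $\partial H/\partial p_i$ and $\partial H/\partial w_i$ are homogeneous of degrees $k-(N+1-i+\alpha)$ and $k-i$. The convention that $0$ has any degree covers the degenerate cases. The paper states this lemma without proof, taking for granted exactly this standard fact that differentiating in a variable lowers the weighted degree by that variable's degree, so your proposal supplies the omitted routine verification along the expected lines.
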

    
We aim to show that the Hamiltonian $H_1$ is homogeneous. In the two cases we 
consider, the potential is homogeneous and the choice of $\alpha$ will ensure that 
the Hamiltonian is homogeneous. The following statement will be proven in the next subsection:
    
\begin{prop}[\textbf{Degree of potential and Hamiltonian}] \label{prop:ChangeDegreeDerivatives}\quad
Consider a BKM system with either (i) $\deg(m)=0$ and $n$ components or (ii) $m(\mu) = \mu$ and $n = 1$ component. Let $T^{*}\mathbb{R}^N$ be the integrable Stäckel system constructed from the data $(N, c(\mu), m(\mu), n)$, where $c(\mu)$ is of the form $c(\mu) = \mu^{2N+n} + m(\mu) \cdot \sum_{i = 1}^N d_i \mu^{N-i}$. 
    \begin{enumerate}
    		\item \textbf{Case (i):} With respect to the grading \eqref{grading} with $\alpha = \frac{n}{2}$, we have $\deg(V_1)= N+n+1 = \deg(H_1)$.
    		\item \textbf{Case (ii):} With respect to the grading \eqref{grading} with $\alpha = 0$, we have $\deg(V_1)= N+1 = \deg(H_1)$.
    \end{enumerate}
\end{prop}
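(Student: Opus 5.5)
The plan is to compute $V_1$ explicitly from the defining relation $f(M) = V_1 M^{N-1} + \dots + V_N\,\mathrm{Id}$, show that it is a weighted-homogeneous polynomial in the $w_i$ (and possibly the $d_i$), and compare its degree with $\deg(K_1) = N+1+2\alpha$ from Example~\ref{ex:DegreeKineticEnergy}. Since $H_1 = K_1 + V_1$, the statement about $\deg(H_1)$ then follows at once. In case (i) we have $2\alpha = n$, so $\deg(K_1) = N+n+1$; in case (ii) we have $\alpha = 0$, so $\deg(K_1) = N+1$. It therefore suffices to prove $\deg(V_1) = N+n+1$, respectively $\deg(V_1) = N+1$.

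To compute $V_1$, I use that for a polynomial $g$, $g(M)$ equals $\rho(M)$, where $\rho$ is the remainder of $g$ modulo $\sigma_w(\mu)$ (Cayley--Hamilton; $M$ is gl-regular, so the representation is unique). Hence $V_i$ is the coefficient of $\mu^{N-i}$ in $g \bmod \sigma_w$.

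In case (i), $f = c/m_0$ is a polynomial. Up to the harmless constant $m_0$, $V_1$ is the $\mu^{N-1}$ coefficient of $(\mu^{2N+n} + m_0 d(\mu)) \bmod \sigma_w$, which equals $h_{N+n+1}(w) + m_0 d_1$. Here $h_j$ denotes the coefficient produced by reducing $\mu^{N-1+j}$. To see this, write $\sigma_w(\mu) = \sum_k w_k \mu^{N-k}$, so that $\mu^N \equiv -\sum_{k\ge1} w_k \mu^{N-k}$. The coefficients of $\mu^{s} \bmod \sigma_w$ then satisfy the linear recursion of the generating function $1/\sigma_w$. Concretely, $\mu^{N-1+j} \bmod \sigma_w$ has $\mu^{N-1}$-coefficient equal to the coefficient of $\mu^{-j}$ in the expansion of $\mu^{N-1}\cdot\mu^{N}/\sigma_w(\mu)\cdot\mu^{-N}$, i.e. the complete homogeneous (Schur-type) polynomial in the roots of $\sigma_w$. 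Assigning each root degree $1$, the $w_k$ become elementary symmetric functions of degree $k$, which matches $\deg(w_k) = k$. Hence this coefficient is homogeneous of degree $j$. With $j = N+n+1$ (from $\mu^{2N+n} = \mu^{N-1+(N+n+1)}$) we get degree $N+n+1$.

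The $d$-term contributes only $d_1$, since $\deg d \le N-1$ requires no reduction. So I must check that $d_1$ may be assigned degree $N+n+1$. The natural convention, in line with $H_i = \tilde H_i + d_i$ being homogeneous, is $\deg(d_i) = \deg(H_i)$. Alternatively, one can note that the statement really concerns $\tilde H_1$, since the constants do not affect the Hamiltonian equations. I will state this explicitly, or treat $d_1$ as a constant that does not enter Lemma~\ref{lem:DegreeDerivatives}.

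Case (ii) has $f(\mu) = \mu^{2N} + d(\mu)$ with $n=1$ and $r=0$, since $\deg r < \deg m = 1$ forces $r=0$. So $f$ is again a polynomial, and the same argument gives $V_1 = h_{N+1}(w) + d_1$, homogeneous of degree $N+1$.

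The main obstacle is making the degree bookkeeping of the reduction modulo $\sigma_w$ rigorous. I would do this by induction on the exponent: if $\mu^s \bmod \sigma_w = \sum_{r} a^{(s)}_r \mu^{N-r}$ with $\deg a^{(s)}_r = s-N+r$, then multiplying by $\mu$ and substituting $\mu^N = -\sum_k w_k\mu^{N-k}$ preserves the property. This gives $\deg V_1 = \deg a^{(s)}_1 = s-N+1$, which is the claimed value for $s = 2N+n$, and in case (ii) for $s = 2N$. Nonvanishing of $V_1$ is not needed, since $0$ has any degree.
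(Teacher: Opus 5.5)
Your proposal is correct and follows essentially the paper's route: your induction on the exponent of $\mu^s \bmod \sigma_w$ is exactly the paper's lemma that the coefficients $v_{i,k}$ in $M^k=\sum_i v_{i,k}M^{N-i}$ are homogeneous of degree $i+k-N$ (obtained from the Cayley--Hamilton recursion), combined with $\deg K_1=N+1+2\alpha$. Two small remarks: $\deg r<1$ only forces $r$ to be constant, so $r=0$ here comes from the hypothesis on $c(\mu)$ rather than from degree counting (in the later Camassa--Holm application $r=c_{2N+1}\neq 0$ contributes a term proportional to $c_{2N+1}/w_N$, still of degree $N+1$ since $\deg c_{2N+1}=2N+1$); and $d_1$ needs no extra convention, since up to the factor $m_0$ it is the coefficient $c_{N+n+1}$ (resp.\ $c_{N+1}$), which already has the required degree under $\deg c_j=j$.
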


From the above proposition, we immediately obtain the following proposition, which describes how the degree of homogeneous functions behaves with respect to differentiation.

\begin{prop}[\textbf{Differentiation and degree}] \label{prop:DifferentiationDegree} \quad
Under the same assumptions as in \cref{prop:ChangeDegreeDerivatives}, the following holds.
\begin{enumerate}
	\item \textbf{Case (i):} With respect to the grading \eqref{grading} with $\alpha = \frac{n}{2}$, we have for any homogeneous function $h$ and $k \in \mathbb{N}$ that
	\begin{equation}
    	\text{deg}(h_{kx}) = \frac{n}{2} \cdot k + \deg(h).
	\end{equation}
	\item \textbf{Case (ii):} With respect to the grading \eqref{grading} with $\alpha = 
    0$, we have for any homogeneous function $h$ and $k \in \mathbb{N}$ that
	\begin{equation}
		\text{deg}(h_{kx}) = \deg(h).
	\end{equation}
\end{enumerate}
\end{prop}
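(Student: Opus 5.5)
The plan is to reduce the statement to the case of a single $x$-derivative, $k=1$, and then induct on $k$. The derivative along the flow is taken with respect to the Hamiltonian $H_1$, which by \cref{prop:ChangeDegreeDerivatives} is homogeneous of degree $\kappa := N+n+1$ in case (i) and $\kappa := N+1$ in case (ii). By \cref{lem:DegreeDerivatives} we then know the degrees of the derivatives of the coordinates:
\[
\deg((w_i)_x) = \kappa - (N+1-i+\alpha) = \deg(w_i) + (\kappa - N - 1 - \alpha),
\qquad
\deg((p_i)_x) = \kappa - i = \deg(p_i) + (\kappa - N - 1 - \alpha).
\]
The shift $\delta := \kappa - N - 1 - \alpha$ is the same for every coordinate. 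In case (i) it equals $N+n+1-N-1-\frac n2 = \frac n2$. In case (ii) it equals $N+1-N-1-0 = 0$. So every position and momentum coordinate has its degree raised by exactly $\delta$ when differentiated once.

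Next, extend this from coordinates to an arbitrary homogeneous function $h$ (polynomial or rational in $w,p$, with the constants $c_j$ or $d_i$ of their assigned degree as coefficients). The constants are conserved, so they have zero $x$-derivative. By the chain rule,
\[
h_x = \sum_i \frac{\partial h}{\partial w_i}(w_i)_x + \sum_i \frac{\partial h}{\partial p_i}(p_i)_x .
\]
If $h$ is homogeneous of degree $d$, then $\partial h/\partial w_i$ is homogeneous of degree $d-\deg(w_i)$, or is zero, which has any degree by convention. Multiplying by $(w_i)_x$, which has degree $\deg(w_i)+\delta$, gives degree $d+\delta$. The same computation works for the $p_i$ terms. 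Hence every summand, and therefore $h_x$, is homogeneous of degree $\deg(h)+\delta$.

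For rational $h = f/g$ with $f,g$ homogeneous, I would either apply the quotient rule, $(f_xg - fg_x)/g^2$, which has degree $(\deg f + \delta + \deg g) - 2\deg g = \deg h + \delta$, or observe that the partial derivative argument above holds for weighted-homogeneous rational functions directly via Euler's relation. Induction on $k$ then gives $\deg(h_{kx}) = \deg(h) + k\delta$, which is $\frac n2 k + \deg(h)$ in case (i) and $\deg(h)$ in case (ii).

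The only step that needs care is checking that \cref{lem:DegreeDerivatives} really applies. This requires that $H_1$ be homogeneous, meaning both $K_1$ and $V_1$ have the same degree. For $K_1$ this is \cref{ex:DegreeKineticEnergy}, which gives degree $N+1+2\alpha$: that is $N+n+1$ in case (i) and $N+1$ in case (ii), matching $\deg(V_1)$ from \cref{prop:ChangeDegreeDerivatives}. So the main content has already been delegated to that proposition, and what remains is this bookkeeping, including the convention for the zero function.
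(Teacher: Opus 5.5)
Your proposal is correct and follows the paper's route. You reduce to a single $x$-derivative of the coordinates $w_i, p_i$, apply Lemma~\ref{lem:DegreeDerivatives} with $\deg(H_1)$ from Proposition~\ref{prop:ChangeDegreeDerivatives}, and read off the uniform shift of $\frac{n}{2}$ in case (i) and $0$ in case (ii). The one difference is that you spell out the chain-rule, quotient-rule and induction step that the paper compresses into ``it is enough to prove the statement for the single variables and one derivative.''
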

\begin{proof}
	It is enough to prove the statement for the single variables and one derivative. By application of Lemma~\ref{lem:DegreeDerivatives} and the definition of the grading, the result readily follows. More precisely:
	\begin{enumerate}
		\item In this case we have
		\begin{align*}
			\deg((w_i)_x) &= (N+1+n)-(N+1-i+\frac{n}{2}) = i + \frac{n}{2} = \deg(w_i) + \frac{n}{2},
			\\
			\deg((p_i)_x) &= N+1+n-i = (N+1+\frac{n}{2} -i) + \frac{n}{2} = \deg(p_i)+ \frac{n}{2}.
		\end{align*}
		\item  In this case we have
		\begin{align*}
			\deg((w_i)_x) &= (N+1)-(N+1-i) = i = \deg(w_i),
			\\
			\deg((p_i)_x) &= N+1-i = \deg(p_i).
		\end{align*}
	\end{enumerate}
\end{proof}

\subsection{Properties of potential}

From now on, we restrict to the following special case of polynomial $c(\mu)$:
\begin{align*}
c(\mu) := \mu^{2N+n} + m(\mu) \cdot d(\mu) + r(\mu),
\end{align*}
where $d(\mu) = \sum_{i=1}^{N} d_i \mu^{N-i}$ is a polynomial of degree at most $N$ and $r(\mu)$ is a polynomial of degree strictly less than $\mathrm{deg}(m)$.
\\
In order to compute the potentials, we need the function
\begin{align*}
f(\mu) &= \frac{c(\mu)}{m(\mu)} = \underbrace{\frac{\mu^{2N+n}}{m(\mu)}}_{=:\tilde{f}(\mu)} + d(\mu) + \underbrace{\frac{r(\mu)}{m(\mu)}}_{\tilde{r}(\mu)}.
\end{align*}
For the potential we have to calculate $f(M)$ and write it as a linear combination of \linebreak $M^{N-1}, \ldots, M, \mathrm{Id}$. We have 
\begin{align*}
f(M) &= \tilde{f}(M)+ \sum_{i=1}^{N} d_i M^{N-i} + \tilde{r}(M),
\end{align*}
and so the constants $d_1, \ldots, d_N$ allow us to add any constant value to each Hamiltonian. 

As these additive constants do not change the equations of 
motions, the dynamics is only influenced by the part of the 
potential coming from $\tilde{f}(M) + \tilde{r}(M)$. 
However, these constants allow us to choose arbitrary 
initial conditions for $w_1, \ldots, w_N, p_1, \ldots, p_N$ 
as in \cref{cor:fin_gap_entire_cotangent_bundle} (and so we do not need to consider the '
common zero-energy level). In the cases that 
we investigate, $\tilde{f}(M)$ is just a power of $M$. More 
precisely, for case (i) we have that $\tilde{f}(M) = 
M^{2N+n}$ and for case (ii) we have that $\tilde{f}(M) = M^{2N+n-1}$. In order to 
investigate the potential energy $V_1$ for both cases simultaneously, we study $M^k$ for $k \in \N$. Note that due to Cayley--Hamilton theorem there is a simple recursion between the coefficients of $M^k$ with respect to the basis $M^{N-1}, \ldots, M, \mathrm{Id}$. We introduce a notation for these coefficients, as follows.

\begin{definition}[\textbf{Constants $v_{i,k}$}] \quad \\ 
	We define the constants $v_{i,k}$ for $i \in \{1,\ldots, N\}, k \in \N$ via
	\begin{align*}
		M^k =: \sum_{i=1}^{N} v_{i,k} M^{N-i}.
	\end{align*}
\end{definition}

In the case (i) with $m=1$ we have $r=0$ and hence the potential $V_1$ is (up to the constant $d_1$) just $v_{1,2N+n}$ and in the case $m(\mu)=\mu$ we have that $V_1$ is $v_{1,2N+n-1}$ together with a contribution of $\tilde{r}(M)$. So we investigate $v_{i,k}$ in order to prove properties of $V_1$. We begin with recursion formulas for the $v_{i,k}$:

\begin{lemma}[\textbf{Initial values and recursion formula for $v_{i,k}$}] \quad \label{lem:Recursion_vik}
	\begin{enumerate}
		\item For $k < N$ we have: $v_{i,k} = \delta_{i, N-k}$.
		\item For $k=N$ we have $v_{i, N} = -w_i$.
		\item For $k\geq N$ we have the recursion formula:
		\begin{align*}
		v_{i, k+1} &= \begin{cases}
		v_{i+1, k} - v_{1,k} w_i \quad & i<N \\
		-v_{1,k} w_N \quad & i=N.
		\end{cases}
		\end{align*} 
	\end{enumerate}
\end{lemma}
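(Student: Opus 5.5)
Since $M$ is in companion form, its characteristic polynomial is $\sigma_w(\mu)=\det(\mu\,\mathrm{Id}-M)=\mu^N+w_1\mu^{N-1}+\dots+w_N$. By Cayley--Hamilton this gives the basic relation
\begin{equation*}
M^N=-\sum_{i=1}^N w_i M^{N-i}.
\end{equation*}
The whole proof rests on this identity, together with the fact that $\mathrm{Id},M,\dots,M^{N-1}$ are linearly independent. Independence holds because $M$ is a companion matrix and hence gl-regular: its minimal polynomial equals its characteristic polynomial, which has degree $N$. Consequently the coefficients $v_{i,k}$ in $M^k=\sum_i v_{i,k}M^{N-i}$ are uniquely determined. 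This uniqueness is what lets us read off each $v_{i,k}$ by comparing coefficients.

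\textbf{Part (1).} For $k<N$, the power $M^k$ is itself one of the basis elements, namely $M^{N-i}$ with $i=N-k$. By uniqueness, $v_{i,k}=\delta_{i,N-k}$.

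\textbf{Part (2).} For $k=N$, the Cayley--Hamilton identity above expresses $M^N$ directly in the basis. Reading off coefficients gives $v_{i,N}=-w_i$.

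\textbf{Part (3).} Fix $k\ge N$ and multiply the expansion of $M^k$ by $M$:
\begin{equation*}
M^{k+1}=\sum_{i=1}^N v_{i,k}M^{N-i+1}=v_{1,k}M^N+\sum_{i=1}^{N-1}v_{i+1,k}M^{N-i}.
\end{equation*}
Here only the $i=1$ term leaves the span of the basis, and we reindexed $i\mapsto i+1$ in the rest. We then replace $M^N$ by $-\sum_i w_iM^{N-i}$ and collect the coefficient of each $M^{N-i}$. For $i<N$ this coefficient is $v_{i+1,k}-v_{1,k}w_i$. For $i=N$ it is $-v_{1,k}w_N$, since the reindexed sum contributes no $\mathrm{Id}$ term. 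Uniqueness of the expansion then gives the stated recursion.

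The argument in fact works for every $k\ge 0$; the restriction $k\ge N$ is not needed. None of the steps poses a real obstacle. The only points needing care are the justification of uniqueness (gl-regularity) and the index bookkeeping at the boundary $i=N$.
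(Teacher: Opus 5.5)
Your proof is correct and takes the same route as the paper: Cayley--Hamilton for the companion matrix gives (2), and (3) follows by multiplying the expansion of $M^k$ by $M$ and substituting $M^N=-\sum_i w_iM^{N-i}$. Two of your additions are also correct: the explicit appeal to the linear independence of $\mathrm{Id},M,\dots,M^{N-1}$, which the paper leaves implicit when it calls (1) ``just the definition'', and the observation that the recursion in fact holds for every $k\ge 0$.
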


\begin{proof}
	(1) is just the definition of $v_{i,k}$, whereas (2) is a direct consequence of the Cayley--Hamilton theorem. For (3) we can calculate using the Cayley--Hamilton theorem:
	\begin{align*}
	M^{k+1} &= M \cdot M^{k} = M \cdot \sum_{i=1}^{N} v_{i,k} M^{N-i} = \sum_{i=2}^{N} v_{i,k} M^{N+1-i} + v_{1,k} M^N = \\ & = \sum_{i=1}^{N-1} v_{i+1,k} M^{N-i} - v_{1,k}\sum_{i=1}^{N} w_i M^{N-i}.
	\end{align*}
\end{proof}
	 
With this we can prove the following lemma:

\begin{lemma}[\textbf{Degree of $v_{i,k}$}] \quad \\
	For each $i \in \{1,\ldots, N\}, k \in \N$, $v_{i,k}$ is a homogeneous polynomial in $w_1, \ldots, w_N$ of degree $i+k-N$.
\end{lemma}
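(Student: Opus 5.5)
We prove the statement by induction on $k$, using the recursion of Lemma~\ref{lem:Recursion_vik}. Since $\deg(w_i)=i$, only the $w$-grading enters, and the claimed degree is $i+k-N$. The convention that $0$ has any degree handles all vanishing coefficients.

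\textbf{Base cases.} For $k<N$, part (1) of Lemma~\ref{lem:Recursion_vik} gives $v_{i,k}=\delta_{i,N-k}$. This constant is either $0$, which has any degree, or equal to $1$ exactly when $i=N-k$. In the latter case the required degree is $i+k-N=0$, which matches. For $k=N$, part (2) gives $v_{i,N}=-w_i$, of degree $i=i+N-N$, as claimed. Note that for $k<N$ and $i+k-N<0$ the coefficient is zero, so no negative-degree issue arises.

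\textbf{Inductive step.} Assume that for some $k\ge N$ every $v_{j,k}$ is homogeneous of degree $j+k-N$. By part (3) of Lemma~\ref{lem:Recursion_vik}, for $i<N$ we have $v_{i,k+1}=v_{i+1,k}-v_{1,k}w_i$. The first term has degree $(i+1)+k-N$. The second term has degree $(1+k-N)+i$. These coincide and both equal $i+(k+1)-N$, so the difference is homogeneous of that degree. For $i=N$ we have $v_{N,k+1}=-v_{1,k}w_N$, of degree $(1+k-N)+N=N+(k+1)-N$, again as required. Polynomiality in $w$ is preserved at each step, since the recursion involves only sums and products with the $w_i$.

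There is no real obstacle here. The only point requiring care is the bookkeeping: the two summands in the recursion must carry the same degree, and the degree-zero constants in the base case must be compatible, with vanishing entries absorbed by the convention on $0$. Once this is checked, the induction closes immediately.
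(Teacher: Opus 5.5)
Your proof is correct and follows exactly the approach the paper intends: the paper says only that the lemma is ``a simple induction on $k$'' via the recursion of Lemma~\ref{lem:Recursion_vik} and omits the details. Your bookkeeping fills those details in correctly, namely the degree-zero and vanishing base cases for $k<N$, the case $v_{i,N}=-w_i$, and matching degrees in both branches $i<N$ and $i=N$ of the recursion.
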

	 
The proof is a simple induction on $k$ and hence omitted. But this statement implies that $V_1$ is a homogeneous polynomial, concluding the proof of Proposition~\ref{prop:ChangeDegreeDerivatives}.
\\	 
As we have proven that $V_1$ is a homogeneous polynomial, next we ask which monomials appear in $V_1$ with non-vanishing coefficients. Note that the degree of $V_1$ is greater than $N$ in the considered cases, so all monomials need to have at least two factors $w_a\cdot w_b$, for which $a+b$ equals $\deg(V_1)$. We will focus on these monomials and prove that they all appear in the potential $V_1$. To this end, we prove properties for all $v_{i,k}$ in the following two lemmas. We start with the case of $k \in \{N, \ldots, 2N\}$. Afterwards, we do the case of $k \in \{2N, \dots, 3N\}$. We introduce the notation to capture error terms for which all terms are at least a given degree (in the usual sense of polynomials in multiple variables). For later use, we also include the $p$ variables into this notation.

\textbf{Notation}. For a polynomial $P$ in $w_1, \dots, w_N, p_1, \dots, p_N$, we use the notation $P=Q + \mathcal{O}(d)$ to indicate that the remaining terms all have at least degree $d$ in the usual sense, not with respect to the grading; i.e. we count the number of (not necessarily distinct) factors in a monomial. That is, $Q$ contains all monomial terms of $P$ with at most $d-1$ factors. If we want to indicate that there are only factors from a specific set of variables, we indicate this with a subindex, e.g. $\mathcal{O}_w(d)$.

\begin{example}
	If $P = w_1 \cdot w_{N-1} + w_N + p_{2} p_N$, then we have
	$P = w_N + \mathcal{O}(2) = w_N + p_{2} p_N + \mathcal{O}_w(2)$.
\end{example}

\begin{lemma}[\textbf{Calculations for $k \in \{N, \ldots, 2N\}$}] \quad \\
	For $k \in \{N, \ldots, 2N\}$, we have that
	\begin{align*}
	v_{i,k} = \sum_{\substack{a,b \in \{0,\ldots, N\} \\ a+b=i+k-N, \\ b \leq k-N}} w_a \cdot w_b + \mathcal{O}_w(3)
	\end{align*}
	where we use the convention $w_0 := -1$ (\textbf{NB:} This is different from our usual convention for $w_0$). 
\end{lemma}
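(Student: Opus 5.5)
We prove the formula by induction on $k \in \{N, \ldots, 2N\}$, using the initial values and the recursion from Lemma~\ref{lem:Recursion_vik}. Throughout we work modulo $\mathcal{O}_w(3)$. Two facts make this legitimate: the recursion only multiplies by single variables $w_i$ and adds terms, and a product of $w_i$ with anything in $\mathcal{O}_w(2)$ lies in $\mathcal{O}_w(3)$. We also use the convention that $v_{N+1,k} := 0$, which makes the case $i=N$ of the recursion look like the case $i<N$.

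\textbf{Base case $k=N$.} The constraint $b \le k-N = 0$ forces $b=0$ and hence $a=i$. The right-hand side is therefore $w_i w_0 = -w_i$, which equals $v_{i,N}$ by Lemma~\ref{lem:Recursion_vik}(2).

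\textbf{Linear part of $v_{1,k}$.} Before the inductive step we record the linear part of $v_{1,k}$. In the sum for $v_{1,k}$ we have $a+b = k-N+1$ and $b \le k-N$. A linear term requires one factor to be $w_0$. Taking $a=0$ would force $b = k-N+1$, which violates $b \le k-N$. Hence the only linear term is $b=0$, $a=k-N+1$, giving
\begin{equation*}
v_{1,k} = -w_{k-N+1} + \mathcal{O}_w(2).
\end{equation*}
This index is admissible because $k \le 2N-1$ in the step. Consequently
\begin{equation*}
-v_{1,k} w_i = w_i\, w_{k+1-N} + \mathcal{O}_w(3).
\end{equation*}

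\textbf{Inductive step $k \to k+1$, for $k \le 2N-1$.} The recursion gives $v_{i,k+1} = v_{i+1,k} + w_i w_{k+1-N} + \mathcal{O}_w(3)$. The claimed expression for $v_{i,k+1}$ sums over pairs with $a+b = i+k+1-N$ and $b \le k+1-N$. We split it into two parts.
\begin{itemize}
\item The pairs with $b \le k-N$ are exactly the pairs indexing $v_{i+1,k}$ in the induction hypothesis, since the sums $a+b$ coincide.
\item The single extra pair $b = k+1-N$ forces $a=i$, and it contributes $w_i w_{k+1-N}$.
\end{itemize}
These two parts reproduce the recursion term by term, which proves the formula for $v_{i,k+1}$.

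\textbf{Index ranges (the main point requiring care).} We must check that every index stays in $\{0,\ldots,N\}$ and that the convention $w_0=-1$ does not introduce spurious terms.
\begin{itemize}
\item For $v_{i+1,k}$ with $i<N$: since $b \le k-N$, we get $a \ge i+1 \ge 1$, so no $w_0$ appears in the $a$-slot.
\item The bound $a \le N$ holds because $a \le i+1+k-N \le N$ would need checking only when $b$ is small, and the extreme case $b=0$ gives $a = i+1+k-N$, which is the usual index bound inherited from the hypothesis.
\item The new index satisfies $b = k+1-N \le N$ because $k \le 2N-1$.
\item For $i=N$: the target pairs satisfy $a+b = k+1$ with $a \le N$, so $b \ge k+1-N$. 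Combined with $b \le k+1-N$, the only pair is $(N, k+1-N)$. This matches $v_{N,k+1} = -v_{1,k} w_N = w_N w_{k+1-N} + \mathcal{O}_w(3)$, consistent with the convention $v_{N+1,k}=0$.
\end{itemize}
Once these index bounds are verified, the induction closes and the lemma follows.
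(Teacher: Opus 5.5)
Your proof is correct and follows the same route as the paper. Both argue by induction on $k$ from $v_{i,N}=-w_i$, extract the linear part $v_{1,k}=-w_{k+1-N}+\mathcal{O}_w(2)$, and match the single new pair $(i,k+1-N)$ from the recursion, treating $i=N$ separately. Your bullet on $a\le N$ is muddled but unnecessary: the constraint $a\in\{0,\ldots,N\}$ is part of both index sets, and those sets coincide exactly.
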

\begin{proof}
The proof is by induction on $k$. The base case $k=N$ is treated in the derivation of the recursion formula: $v_{i,N} = -w_i$. 

To prove the inductive step, assume the statement is true for $k$. Then for $c_{i, k+1}$ we can use the recursion formula and distinguish the cases $i < N$ and $i=N$:
\begin{itemize}
	\item For $i<N$ we have
		\begin{align*}
		v_{i, k+1} = v_{i+1, k} - v_{1,k} \cdot w_i.
		\end{align*}
	By the induction hypothesis, $v_{i+1, k} = \sum\limits_{\substack{a,b \in \{0,\ldots, N\} \\ a+b=i+1+k-N, \\ b \leq k-N}} w_a \cdot w_b + \mathcal{O}_w(3)$. Again using the induction hypothesis, $v_{1,k} = -w_{k+1-N} + \mathcal{O}_w(2)$ (the term $-w_{k+1-N}$ appears for $k+1-N, b=0$).
	As $\mathcal{O}_w(2) \cdot w_i = \mathcal{O}_w(3)$, these terms are not important. So
	\begin{align*}
		v_{i, k+1} &= v_{i+1, k} - v_{1,k} \cdot w_i = \sum\limits_{\substack{a,b \in \{0,\ldots, N\} \\ a+b=i+1+k-N, \\ b \leq k-N}} w_a \cdot w_b + v_{1,k} \cdot w_{k+1-N} + \mathcal{O}_w(3) = \\ & = \sum\limits_{\substack{a,b \in \{0,\ldots, N\} \\ a+b=i+k+1-N, \\ b \leq k+1-N}} w_a \cdot w_b + \mathcal{O}_w(3).
	\end{align*}
	\item For $i=N$, as $a+b=i+k+1-N=k+1$ and $b \leq k+1-N$ shall hold, there is only one such pair: $a=N$ and $b=k+1-N$. This we get from 
	\begin{align*}
		v_{N, k+1} = - v_{1,k} \cdot w_N = -(-w_{k+1-N} + \mathcal{O}_w(2)) \cdot w_N = w_{k+1-N} \cdot w_N + \mathcal{O}_w(3).
	\end{align*}
	\end{itemize}
\end{proof}

\begin{remark}
	In the next case $k \in \{2N, \ldots, 3N\}$, as we know that the $v_{i,k}$ are polynomials of degree $i+k-N \in \{N+1, \ldots, 3N\}$, we know that the terms with the smallest number of $w$-factors have to be quadratic or cubic. We are mainly interested in the quadratic part and in view of the degree of $v_{i,k}$ we restrict to $i \leq 3 N - k$.
\end{remark}

\begin{lemma}[\textbf{Calculations for $k \in \{2N, \ldots, 3N\}$}] \label{lem:Calculationsk2N3N} \quad \\
	For each $k \in \{2N, \ldots, 3N\}$ and each $i \leq 3N-k$, we have that
	\begin{align*}
	v_{i,k} = \sum_{\substack{a,b \\ a+b=i+k-N}} w_a \cdot w_b + \mathcal{O}_w(3).
	\end{align*}
\end{lemma}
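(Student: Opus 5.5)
The plan is induction on $k$, starting at $k=2N$ and using the recursion formula of Lemma~\ref{lem:Recursion_vik} together with the homogeneity of the $v_{i,k}$ (the lemma on the degree of $v_{i,k}$). First I would settle the range of the sum. Since $i\geq 1$ and $k\geq 2N$, we have $a+b=i+k-N\geq N+1$. As every index satisfies $a,b\leq N$, this forces $a,b\geq 1$. So the sum runs over ordered pairs $(a,b)\in\{1,\dots,N\}^2$, and the convention for $w_0$ plays no role.

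\textbf{Base case $k=2N$.} Here $i\leq N$ is arbitrary, and the previous lemma (case $k\in\{N,\dots,2N\}$) applies. It gives
\[
v_{i,2N}=\sum_{\substack{a,b\in\{0,\ldots,N\}\\ a+b=i+N,\ b\leq N}} w_a w_b+\mathcal{O}_w(3).
\]
By the observation above, the constraint $a+b=i+N\geq N+1$ already excludes $a=0$ and $b=0$, and $b\leq N$ is automatic. So this sum coincides with the one claimed in Lemma~\ref{lem:Calculationsk2N3N}.

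\textbf{Inductive step $k\to k+1$, with $2N\leq k<3N$.} Let $i\leq 3N-k-1$. Then $i<N$, so the recursion gives
\[
v_{i,k+1}=v_{i+1,k}-v_{1,k}\,w_i .
\]
Since $i+1\leq 3N-k$, the induction hypothesis applies to $v_{i+1,k}$. It yields the sum over $a+b=(i+1)+k-N=i+(k+1)-N$ plus $\mathcal{O}_w(3)$, which is exactly the claimed expression for $v_{i,k+1}$.

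It then remains to show that $v_{1,k}w_i=\mathcal{O}_w(3)$, i.e.\ that $v_{1,k}=\mathcal{O}_w(2)$. This is the only point needing an argument, and it is where the contrast with the previous lemma lies: there $v_{1,k}$ contained the linear term $-w_{k+1-N}$. Now $v_{1,k}$ is a homogeneous polynomial in $w$ of degree $1+k-N\geq N+1$. A constant has degree $0$ and a single variable $w_j$ has degree $j\leq N$, so neither can occur with nonzero coefficient. Hence every monomial of $v_{1,k}$ has at least two factors, so $v_{1,k}=\mathcal{O}_w(2)$ and $v_{1,k}w_i=\mathcal{O}_w(3)$, which completes the induction.

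I expect no real obstacle. The only care needed is checking the index bookkeeping: that $i\leq 3N-k-1$ ensures both $i<N$ (so the first branch of the recursion applies) and $i+1\leq 3N-k$ (so the hypothesis applies to $v_{i+1,k}$).
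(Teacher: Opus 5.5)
Your proof is correct and follows the paper's argument: induction on $k$, with the base case $k=2N$ taken from the previous lemma, and the inductive step via $v_{i,k+1}=v_{i+1,k}-v_{1,k}w_i$ for $i<N$. Two points the paper passes over silently are made explicit in your version: the homogeneity argument that $v_{1,k}=\mathcal{O}_w(2)$ (its degree $1+k-N\geq N+1$ rules out constant and linear terms), and the check that at $k=2N$ the constraint $b\leq k-N$ and the $w_0$ terms drop out.
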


\begin{proof}
	\quad \\
	Again, the proof is by induction on $k$. The base case is covered by the previous lemma. So we only need to do the induction step from $k$ to $k+1$. Thus, we consider an index $i$ with $i \leq 3N-(k+1)$. 
    
	From the recursion formula (note that $i<N$ has to hold) we obtain
	\begin{align*}
	v_{i, k+1} = v_{i+1, k} - v_{1,k} w_i = \sum_{\substack{a,b \\ a+b=i+1+k-N}} w_a \cdot w_b + \mathcal{O}_w(3) - \mathcal{O}_w(2) w_i = \sum_{\substack{a,b \\ a+b=i+(k+1)-N}} w_a \cdot w_b + \mathcal{O}_w(3).
	\end{align*}
\end{proof}

\section{Proof for $\mathrm{deg}(m) = 0$}
\label{sec:Proofm=1}

In the KdV and KB case of Subsection~\ref{subsec:Examplesm=1} we saw that Taylor coefficients of suitable orders $k$ satisfy the following triangular structure:
\begin{equation}
    (w_i)_{kx} = a_k \cdot z_{new} + F(z_{old}),   \hspace{2cm} (a_k \in \mathbb{Q} \backslash \{0\})
\end{equation}
where $z_{new}$ is a new variable that enters linearly and $F(z_{old})$ is 
a polynomial in a number of variables $z_{old}$ that already have been 
fixed at either (i) lower-order coefficients or (ii) $k$th order coefficients 
of a position component with index strictly smaller than $i$. Since we can use the polynomial $c(\mu)$ to set the Hamiltonians to any value, we have a freedom of $2N$ variables (choosing the initial point on $T^{*}\R^N_{(w,p)}$). For the above triangular structure, this entails that we can approximate up to order $k = \left\lfloor \frac{2N}{n} \right\rfloor-1$. We now prove this triangular structure for the general case with $\deg(m) = 0$. As a computational tool we shall rely on the grading for polynomials in variables $w_i, p_i$ cf. Section~\ref{sec:GradingVariables}.

\setcounter{theorem}{0}
\begin{theorem}[\textbf{Approximating jets of initial data for $\deg(m) = 0$}]
Consider an $n$-component BKM system with $\deg(m) = 0$. Let $\mathcal{R}: \mathbb{R}^N 
\rightarrow \mathbb{R}^n$ be the finite-reduction map for the Stäckel system with $N$ gaps. A $k$-jet of any initial data $x \mapsto u(x,0)$ can be realized as the $k$-jet of a finite-gap solution. More precisely, for any $k$-jet $j_0^ku$ there exists $N \in \mathbb{N}$ and an initial condition $(w(0), p(0))$ determining a solution $w$ such that $\mathcal{R}(j_0^kw) = j^k_0u$.
\end{theorem}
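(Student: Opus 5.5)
By Proposition~\ref{prop:ReductionMap}, the reduction map is triangular: $u_i=\lambda_i w_i+P_i(w_1,\dots,w_{i-1},c)$ with $\lambda_i\neq 0$ a constant and $P_i$ polynomial (for instance $u_1=-2w_1$). Jets of $(u_1,\dots,u_n)$ and of $(w_1,\dots,w_n)$ therefore determine each other through a triangular, polynomially invertible correspondence. It thus suffices to prove $k$-jet surjectivity of $w_I=(w_1,\dots,w_n)$ for the flow of $H_1$, with $N$ chosen large in terms of $k$. By Corollary~\ref{cor:fin_gap_entire_cotangent_bundle}, all $2N$ initial values $(w(0),p(0))$ may be prescribed freely; the constants $d_i$ are absorbed into $c(\mu)$ and do not enter the equations of motion.

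\textbf{Step 1: linear part of the vector field.} Since $w_0=1$, the part of $K_1$ with no $w$-factor is $\tfrac12\sum_{l+k=N+1}p_lp_k$. Every other term of $K_1$ has at least three factors. Hence $(w_i)_x=p_{N+1-i}+\mathcal{O}(2)$. For the potential, $V_1=v_{1,2N+n}+d_1$. Applying Lemma~\ref{lem:Calculationsk2N3N} with $i=1$, $k=2N+n$ (valid once $N\ge n+1$) gives $v_{1,2N+n}=\sum_{a+b=N+n+1}w_aw_b+\mathcal{O}_w(3)$. Here only indices $1\le a,b\le N$ occur, so there are no linear terms. Consequently $(p_s)_x=-2w_{N+n+1-s}+\mathcal{O}(2)$, with $w_j:=0$ for $j>N$. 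The vector field has no constant terms, so differentiating along the flow a monomial with at least two factors produces only monomials with at least two factors. Inductively, for $j=2l$ and $j=2l+1$ respectively,
\begin{equation*}
(w_i)_{2l\,x}=(-2)^l w_{i+nl}+\mathcal{O}(2),\qquad (w_i)_{(2l+1)x}=(-2)^l p_{N+1-i-nl}+\mathcal{O}(2),
\end{equation*}
provided the indices $i+nl\le N$, respectively $N+1-i-nl\ge 1$, are in range.

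\textbf{Step 2: grading controls the nonlinear terms.} By Proposition~\ref{prop:DifferentiationDegree}(i) with $\alpha=\tfrac n2$, $(w_i)_{jx}$ is homogeneous of degree $i+\tfrac{jn}{2}$, and the "new" variable above has exactly this degree. All variables have positive degree, so every monomial in the $\mathcal{O}(2)$ part involves only variables of strictly smaller degree. I then order the slots $(i,j)$, $1\le i\le n$, $0\le j\le k$, by the value $i+\tfrac{jn}{2}$. These values are pairwise distinct. Even $j$ correspond to the $w$-variables $w_1,\dots,w_{n(k/2+1)}$, and odd $j$ to the $p$-variables $p_{N+1-i-nl}$, which have degree $i+nl+\tfrac n2$. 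So each variable of degree below a given slot's degree is precisely the new variable of an earlier slot. Solving slot by slot in this order gives the triangular form $(w_i)_{jx}=a\cdot z_{new}+F(z_{old})$ with $a=\pm 2^{l}\neq0$, and the prescribed values can be realised recursively.

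\textbf{Step 3: counting and the main obstacle.} I need all required indices to lie in $\{1,\dots,N\}$ and the $w$- and $p$-variables used to be distinct. The $w$'s used are $w_1,\dots,w_{n(\lfloor k/2\rfloor+1)}$; the $p$'s used are $p_{N+1-i-nl}$ for $2l+1\le k$, i.e.\ at most the top $n\lceil k/2\rceil$ of them. These constraints hold as soon as $N\ge n(k+1)/2+n+1$, consistent with the bound $k\le\lfloor 2N/n\rfloor-1$. Choosing such an $N$ and the remaining initial data arbitrarily completes the proof.

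The step I expect to need the most care is Step 1, namely verifying that the linear part of $\partial V_1/\partial w_s$ is exactly $2w_{N+n+1-s}$ with nonzero coefficient for every relevant $s$. This relies on the range of validity of Lemma~\ref{lem:Calculationsk2N3N} and on the fact that the $w_0$-convention creates no spurious linear terms. The degree bookkeeping that excludes accidental contributions of the new variable through nonlinear terms is automatic by homogeneity.
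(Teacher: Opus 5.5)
Your proposal is correct and follows essentially the same route as the paper: the unique linear term of $(w_i)_{jx}$ is tracked by induction, using the linear part of $\partial K_1/\partial p$ and the quadratic part of $V_1=v_{1,2N+n}$ from Lemma~\ref{lem:Calculationsk2N3N} (this is Proposition~\ref{prop:LinearTerms}), and homogeneity then gives a triangular system that is solved variable by variable. One small inaccuracy does not affect the argument: the slot degrees $i+\tfrac{jn}{2}$ need not be pairwise distinct when $n$ is even (e.g.\ $(i,j)=(2,0)$ and $(1,1)$ for $n=2$), and not every lower-degree variable is a slot variable (e.g.\ $w_4$ for $n=3$, $k=1$). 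Both are harmless, since tied slots have distinct new variables that cannot occur in each other's $\mathcal{O}(2)$ parts, and leftover variables are simply fixed in advance.
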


In view of the triangular structure for the reduction map $\mathcal{R}$ proved in  
Proposition~\ref{prop:ReductionMap}, we need to consider the derivatives $(w_i)_{kx}$ 
for $i=1, \ldots, n$ and $k=0, \ldots, \left\lfloor \frac{2N}{n} \right\rfloor-1$. We 
now show that there is also a triangular structure for the Taylor coefficients 
$(w_i)_{kx}$ with respect to $w,p$ coordinates (ordered by increasing degrees).

To this end, we aim to find the highest-degree variables appearing in $(w_i)_{kx}$. The homogeneity property for $(w_i)_{kx}$ implies that we can rule out variables with a degree greater than $\frac{n}{2} \cdot k + i$. However, we consider the question whether the variables with this particular degree appear. If they do, they necessarily appear linearly. Our goal is to prove the following proposition, which directly implies the aforementioned theorem:

\begin{prop}[\textbf{Linear terms in Taylor coefficients}] \label{prop:LinearTerms}
	Let $i \in \{1, \ldots, n\}$ and $k \in \left\{0, \ldots, \left\lfloor\frac{2N}{n}-1\right\rfloor \right\}$.
	\begin{itemize}
		\item For even $k$, we have that
        \begin{equation}
            (w_i)_{kx} = a_k \cdot w_{n \cdot \frac{k}{2} +i} + \mathcal{O}(2) \hspace{2.08cm} (a_k \neq 0).
        \end{equation}
		\item For odd $k$, we have that
        \begin{equation}
            (w_i)_{kx} = a_k \cdot p_{N+1-(n \cdot \frac{k-1}{2} +i)} + \mathcal{O}(2) \qquad (a_k \neq 0).
        \end{equation}
	\end{itemize}
\end{prop}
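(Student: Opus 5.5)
We argue by induction on $k$, tracking only the part of $(w_i)_{kx}$ that is linear in the variables, i.e.\ working modulo $\mathcal{O}(2)$. Since $\mathcal{O}(2)$ is preserved under differentiation along the flow once the Hamiltonian vector field has no constant terms, the linear part of $(w_i)_{(k+1)x}$ is obtained by applying the \emph{linearization} of the Hamiltonian vector field of $H_1=K_1+V_1$ at the origin to the linear part of $(w_i)_{kx}$. Should constant terms occur, they only contribute terms of lower degree, which are excluded by homogeneity. The first step is therefore to compute the quadratic part of $H_1$ in the variables $(w,p)$.

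For the kinetic energy, Example~\ref{ex:DegreeKineticEnergy} gives the quadratic part $\frac12\sum_{l+k=N+1}p_lp_k$ (the $i=0$ summand, with $w_0=1$). For the potential, $V_1=v_{1,2N+n}+d_1$. Lemma~\ref{lem:Calculationsk2N3N} applies with $k=2N+n$ and $i=1\le N-n$, which holds when $N$ is large; this regime is the relevant one, since the range $k\le\lfloor 2N/n\rfloor-1$ forces indices up to $\approx N$. It shows that the quadratic part of $V_1$ is $\sum_{a+b=N+n+1}w_aw_b$, with no linear terms because $\deg V_1>N$. Hence, modulo $\mathcal{O}(2)$,
\begin{align*}
(w_j)_x \equiv p_{N+1-j}, \qquad (p_j)_x \equiv -2\,w_{N+n+1-j},
\end{align*}
with the conventions that $w_a=0$ for $a>N$ and that the index range matches. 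Here the factor $2$ arises from the symmetric sum, with a possible diagonal adjustment when $a=b$.

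The induction is now immediate. Starting from $(w_i)_{0x}=w_i$, one step gives $p_{N+1-i}$, and the next gives $-2w_{N+n+1-(N+1-i)}=-2w_{i+n}$. Iterating, the even derivative $k=2r$ has linear part $(-2)^r w_{nr+i}$ (up to the diagonal factor), and the odd derivative $k=2r+1$ has linear part $(-2)^r p_{N+1-(nr+i)}$. This is exactly the claimed form with $a_k\neq0$. The only conditions needed are that the indices stay in range, $nr+i\le N$, and that the lemma's hypothesis on the potential holds. Both follow from $k\le\lfloor 2N/n\rfloor-1$, possibly after tightening by one index, which has to be checked at the boundary. As a consistency check, homogeneity (Proposition~\ref{prop:DifferentiationDegree}) confirms that $\deg w_{nk/2+i}=\frac n2k+i$, so these are the only linear variables allowed.

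The main obstacle is the bookkeeping at the boundary of the index ranges. Near $k\approx 2N/n$ one must verify three things: that the relevant $w_a$ still satisfies $a\le N$; that the quadratic part of $V_1$ contributes only the pairs claimed; and that no cancellation occurs in the coefficient, for instance from the diagonal term $a=b$ giving the factor $1$ instead of $2$. I would handle this by comparing directly with the recursion of Lemma~\ref{lem:Recursion_vik}. If the top index fails, I would slightly reduce the admissible range of $k$, which does not affect Theorem~\ref{thm:ApproximationCasem=1} since $N$ can be taken large.
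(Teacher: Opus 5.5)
Your argument is essentially the paper's: induction on $k$ modulo $\mathcal{O}(2)$, using $\partial K_1/\partial p_l=\pm p_{N+1-l}+\mathcal{O}(2)$ and, via Lemma~\ref{lem:Calculationsk2N3N}, $\partial V_1/\partial w_l=\frac{2}{m_0}w_{N+n+1-l}+\mathcal{O}(2)$. Your diagonal worry is moot, because the quadratic part is summed over ordered pairs, so the factor $2$ also holds when $a=b$; and since each linear part is a single monomial, no cancellation can occur.

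Your boundary caution is warranted, and the paper's proof passes over it. The chain $w_i\to p_{N+1-i}\to w_{i+n}\to\cdots$ needs $n\lfloor k/2\rfloor+i\le N$. This fails at the top of the stated range whenever $\lfloor 2N/n\rfloor$ is odd; for example, with $n=2$, $N\ge 3$ odd, $k=N-1$ and $i=2$ one gets $(w_2)_{kx}=\mathcal{O}(2)$. So the range should be tightened to $n(\lfloor k/2\rfloor+1)\le N$. This is harmless for Theorem~\ref{thm:ApproximationCasem=1}, since $N$ can be chosen freely.
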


The idea of the proof is that a term of highest possible degree has to appear linearly. 
Such a term must arise from differentiating the leading linear term in $(w_i)_{(k-1)x}$.
We do an induction on $k$ and see that each highest-degree linear term in $(w_i)_{(k-1)x}$ produces only one linear term in $(w_i)_{kx}$.

\begin{proof}[Proof of Proposition~\ref{prop:LinearTerms}] \quad \\
    The proof is by induction on $k$. The base case $k=0$ is trivial, as $(w_i)_{0x}= w_i$. \\
	So let us prove the inductive step from $k$ to $k+1$. As $(w_i)_{kx}$ contains only one linear term, the derivative of this term is the only possible linear term in $(w_i)_{(k+1)x}$. Note that the derivative of $\mathcal{O}(l)$ is again a term $\mathcal{O}(l)$. Now we split cases according to $k$ being even or odd:
	\begin{itemize}
		\item If $k$ is even, $(w_i)_{kx} = a_k \cdot w_{n \cdot \frac{k}{2} +i} + \mathcal{O}(2)$. So
		\begin{align*}
			(w_i)_{(k+1)x} &= a_k \cdot \left(w_{n \cdot \frac{k}{2} +i}\right)_{x} + \mathcal{O}(2) = a_k \frac{\partial K_1}{\partial p_{n \cdot \frac{k}{2} +i}} + \mathcal{O}(2).
		\end{align*}
		Recall that $K_1 = -\frac{1}{2}  \sum_{i=0}^{N-1} w_i \cdot \sum_{\substack{l,k \\l+k=N+1+i}} p_l p_k$ with $w_0=1$. So for any index $l \in \{1, \ldots, N\}$,
		\begin{align*}
			\frac{\partial K_1}{\partial p_l} = -\sum_{\substack{i,k\\ l+k=N+1+i}} w_i p_k = -p_{N+1-l} + \mathcal{O}(2).
		\end{align*}
		Replacing $l= n \cdot \frac{k}{2} +i$, we obtain
		\begin{align*}
			(w_i)_{(k+1)x} &= -a_k p_{N+1-( n \cdot \frac{(k+1)-1}{2} +i)} + \mathcal{O}(2).
		\end{align*}
		\item If $k$ is odd, $(w_i)_{kx} = a_k \cdot p_{N+1-(n \cdot \frac{k-1}{2} +i)} + \mathcal{O}(2)$. So
		\begin{align*}
			(w_i)_{(k+1)x} &= a_k (p_{N+1-( n \cdot \frac{k-1}{2} +i)})_{x} + \mathcal{O}(2) = a_k \left(\frac{\partial K_1}{\partial w_{N+1-( n \cdot \frac{k-1}{2} +i)}} + \frac{\partial V_1}{\partial w_{N+1-( n \cdot \frac{k-1}{2} +i)}}\right) + \mathcal{O}(2).
		\end{align*}
		As each $w$-variable appears in $K_1$ only in $\mathcal{O}(3)$-terms, all terms in $\frac{\partial K_1}{\partial w_{N+1-( n \cdot \frac{k-1}{2} +i)}}$ are $\mathcal{O}(2)$. So we only need to consider the second term and there we can ignore the $\mathcal{O}(3)$ terms. Here we can use Lemma~\ref{lem:Calculationsk2N3N}, as $V_1 = v_{1,2N+n}$, and so $V_1 = \frac{1}{m_0}\sum_{\substack{a,b \\ a+b=N+n+1}} w_a \cdot w_b + \mathcal{O}(3)$. We obtain
		\begin{align*}
			\frac{\partial V_1}{\partial w_{N+1-( n \cdot \frac{k-1}{2} +i)}} &= \frac{2}{m_0} w_{N+n+1-(N+1-( n \cdot \frac{k-1}{2} +i))} +\mathcal{O}(2) = \frac{2}{m_0}w_{n \cdot \frac{k+1}{2} +i} +\mathcal{O}(2).
		\end{align*}
		Combining this with the above calculation, the claim is proven.
	\end{itemize}
\end{proof}

So if we consider the vector-valued function $(w_1, w_2, \ldots, w_n, (w_1)_x, \ldots, (w_n)_x, (w_1)_{2x}, \ldots (w_n)_{kx})^{T}$ and differentiate it with respect to the variables $(w_1, \ldots, w_n, p_N, \ldots, p_{N-n+1}, w_{n+1}, \ldots w_{2n}, p_{N-n}, \ldots)$, we get a triangular matrix. Additionally, we can solve the system of equations in these variables choosing them in the right order and prescribe the values of $w_1, \ldots, (w_n)_{kx}$ as desired. This concludes the proof of Theorem~\ref{thm:ApproximationCasem=1}.

\section{Proof for $\mathrm{deg}(m) = 1, n = 1$}
\label{sec:ProofDegm=1}

Now we consider the case of one-component systems with an arbitrary (non-constant) polynomial $m(\mu) = \mu + m_0$. In this case, we can prove the following theorem:

\begin{theorem}[\textbf{Approximating jets of initial data for $\deg(m) = 1, n=1$ on an open set}] 
	Consider a $1$-component BKM system with $\deg(m) = 1$. Let $\mathcal{R}: \mathbb{R}^N \rightarrow \mathbb{R}$
	be the finite-reduction map for the Stäckel system with $N$ gaps. For an open set of $k$-jets $j_0^ku$ there exists $N \in \mathbb{N}$ and an initial condition $(w(0), p(0))$ determining a solution $w$ such that $\mathcal{R}(j_0^kw) = j^k_0u$.
    
    Moreover, the BKM system and finite-reduction map may be viewed over the complex numbers. Over $\mathbb{C}$, the statement holds true for a Zariski-open (in particular: dense) set of $k$-jets.
\end{theorem}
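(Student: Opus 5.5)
First work out the reduction map for $n=1$, $m(\mu)=\mu$. The degree condition on $\sigma_u(\mu)\sigma_w(\mu)^2-c(\mu)$ is automatic, because $\mu^{2N+1}$ cancels. Divisibility by $\mu$ forces the constant term to vanish, so $u_1 w_N^2 = c_{2N+1}$. Hence $\mathcal{R}(w)=c_{2N+1}/w_N^2$ up to the normalisation of $c$. Approximating $j^k_0u$ on the open set $\{u(0)\neq 0\}$ (and with $c_{2N+1}$ chosen to match the sign) is therefore equivalent to approximating $j^k_0 w_N$. The map $w_N\mapsto c/w_N^2$ is a local diffeomorphism on jets away from $w_N=0$, and algebraic over $\mathbb{C}$.

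So it suffices to show that $j^k\mathrm{pr}_N\circ j^k_{\mathrm{Ham}}:(w(0),p(0))\mapsto (w_N,(w_N)_x,\dots,(w_N)_{kx})(0)$ is a submersion at one point, for $N$ large. The domain is $T^*\mathbb{R}^N$ with all $2N$ coordinates free by Corollary~\ref{cor:fin_gap_entire_cotangent_bundle}. The Taylor coefficients are polynomial in $(w,p)$; the potential $V_1=v_{1,2N}+\tilde r$-part with $\tilde r=0$ is polynomial. The image of a polynomial map whose Jacobian has full rank somewhere contains an open set over $\mathbb{R}$. Over $\mathbb{C}$, a dominant polynomial map has constructible and dense image (Chevalley), so the image contains a Zariski-open set. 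Composing with the algebraic local isomorphism $\mathcal{R}$ gives the Zariski-open statement. The general $m(\mu)=\mu+m_0$ then reduces to $m=\mu$ by the shift $\mu\mapsto\mu-m_0$, i.e.\ the gauge/affine change $L\mapsto L-m_0\,\mathrm{Id}$, $M\mapsto M-m_0\,\mathrm{Id}$. This transforms companion coordinates polynomially and invertibly, and conjugates the Stäckel systems.

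The core step is the rank computation. Here the grading with $\alpha=0$ (Proposition~\ref{prop:DifferentiationDegree}(ii)) says every $(w_N)_{jx}$ is homogeneous of degree $N$. Unlike the case $\deg(m)=0$, differentiation does not raise degree, so there is no triangular structure: new variables do not enter linearly at the top degree. I would instead compute the Jacobian at a carefully chosen point. The candidate is $w=(0,\dots,0,w_N)$ with $w_N\neq 0$ and $p$ supported on few entries, where $K_1$ and $V_1$ simplify. By Lemma~\ref{lem:Calculationsk2N3N}, $V_1=\sum_{a+b=N+1}w_aw_b+\mathcal{O}_w(3)$, which near such a point gives tractable derivatives. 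Track how $(w_N)_{jx}$ depends linearly on a chosen set of $k+1$ variables (e.g.\ $w_N, p_1, w_{N-1}, p_2,\dots$ alternately, mirroring Proposition~\ref{prop:LinearTerms} but now in a "lowest-index" ordering). The aim is to show the resulting $(k+1)\times(k+1)$ minor is triangular with nonzero diagonal entries, each a nonzero multiple of a power of $w_N$ (or of a chosen $p$), for $N\ge k+1$.

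The main obstacle is exactly this: verifying that the relevant diagonal coefficients do not vanish identically, since cancellations between kinetic and potential contributions are possible. I would first try an induction on $j$, expanding $(w_N)_{jx}$ modulo the ideal generated by the "not-yet-used" variables. If that fails, I would fall back to showing the Jacobian determinant is a nonzero polynomial by evaluating its lowest-order term in a scaling limit. Only nonvanishing somewhere is needed, which is precisely why the theorem is stated for an open, respectively Zariski-open, set rather than for all jets.
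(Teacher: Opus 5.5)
Your reductions are sound: $\mathcal{R}(w)=c_{2N+1}/w_N^2$, the passage from $j^k_0u$ to $j^k_0w_N$ where $u(0)\neq0$, ``full rank at one point gives an open image'' over $\mathbb{R}$ with Chevalley over $\mathbb{C}$, and the shift by $m_0$ (essentially the alternative route sketched at the end of Section~\ref{subsec:GeneralizationArbitraryPolynomial}). But the whole content of the theorem is the rank statement, and you leave it open. Calling it ``the main obstacle'' and listing two strategies you might try does not show that the Jacobian of $((w_N)_x,\dots,(w_N)_{kx})$ is nonsingular anywhere. As you note yourself, all $(w_N)_{jx}$ have the same weighted degree $N$, so nothing structural gives this for free; it must be computed.

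Worse, the setup you would compute in loses exactly the term that makes it work. You put $\tilde r=0$ and call $V_1$ polynomial. But you need $c_{2N+1}\neq0$ (otherwise $\mathcal{R}\equiv0$), and the constant $c_{2N+1}$ is not divisible by $m(\mu)=\mu$. So $c(\mu)/\mu$ contains $c_{2N+1}/\mu$, which gives $c_{2N+1}M^{-1}$ in $f(M)$ and the term $c_{2N+1}/w_N$ in $V_1$. The Taylor coefficients are therefore rational and regular on $w_N\neq0$; your Chevalley step survives this.

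The paper evaluates at $w_1=\dots=w_{N-1}=0$, $p=0$, $w_N\neq0$. In Proposition~\ref{prop:structure_jacobian} it shows that, with respect to precisely your ordering $p_1,w_{N-1},p_2,w_{N-2},\dots,w_1$, the Jacobian of $((w_N)_x,\dots,(w_N)_{(2N-2)x})$ is triangular. (The paper lists the variables in reverse, so there the matrix is anti-triangular.) Its diagonal entries are $(-1)^i2^{\lfloor (i-1)/2\rfloor}(c_{2N+1}/w_N^2)^{\lfloor i/2\rfloor}$ for $i=1,\dots,2N-2$. These are powers of $c_{2N+1}/w_N^2$, not of $w_N$ or of a momentum. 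Without the $c_{2N+1}/w_N$ term, every diagonal entry with $i\geq2$ vanishes and your minor is singular at that point.

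Establishing these entries takes the inductive zero-pattern Lemmas~\ref{lem:simpleDerivativesAtAll0}--\ref{lem:lemma10}. It then requires evaluating the surviving binomial convolutions, via the reduced-tangent-number recursions (from $\tan'=1+\tan^2$) and the recursion solved by $s_i=2^{i-1}$ (via a $\tanh$ generating-function ODE).

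Lastly, $\deg(m)=1$ allows $m(\mu)=m_1\mu+m_0$, and your shift removes only $m_0$. The coefficient $m_1$ rescales $V_1$ by $1/m_1$, and for $m_1<0$ this cannot be absorbed by a real rescaling of $x$. This case still needs an argument; the paper uses $x\mapsto ax$, $m\mapsto a^2m$ together with algebraic dependence on $m_1$.
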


We split the proof into two parts. First, we will consider the case $m(\mu) = 
\mu$ of the Camassa-Holm equation. In this case the potential is homogeneous with respect to our grading, allowing an approach similar to the case $\deg(m)=0$. This will be done in Section~\ref{subsec:ProofCamassaHolmCase}. 
Afterwards, we reduce the general case $m(\mu) = \mu + m_0$ to this special 
case by a coordinate change and a suitable choice of the polynomial $c(\mu)$ 
in Section~\ref{subsec:GeneralizationArbitraryPolynomial}.

\subsection{Proof for Camassa-Holm case $m(\mu) = \mu$} \label{subsec:ProofCamassaHolmCase}

We look at the Camassa--Holm equation, which is 
of BKM type III with $n = 1$ and polynomial $m(\mu) = \mu$. The Camassa--Holm equation is given 
by
\begin{equation}
    \begin{split}
        u_t & = 2 u q_x + q u_x   \\
        u & = 2 q - q_{xx} 
    \end{split}
\end{equation}
In order to prove Theorem~\ref{thm:ApproximationCasemlinear} in the case $m(\mu) = \mu$, we 
adapt the approach from the previous section. To this end, note first that the 
reduction map $\mathcal{R}$ is given by $u_1 = \mathcal{R}(w) = \frac{c_{2N+1}}{w_N^2}$. So 
it is sufficient to approximate $w_N$ up to any order (the constant $c_{2N+1}$ ensures we can approximate initial data of either sign). In view of the reduction map, we cannot choose $c_{2N+1}=0$ as we did 
in the previous section. Our approach for the polynomial $c$ is to take
\begin{align*}
	c(\mu) = \mu^{2N+1} + m(\mu) \sum_{i=1}^{N} d_i \mu^{N-i} + c_{2N+1}.
\end{align*}
Again, the constants $d_1, \ldots, d_N$ are used to get arbitrary values for the potentials 
so that we are no longer restricted to the zero-surface of the Hamiltonians cf. 
\cref{cor:fin_gap_entire_cotangent_bundle}. The constants $d_i$ do not enter the equations of motion, and so we consider the following expressions:
\begin{align*}
	H_1  & = K_1 + V_1, \text{ where} \\
	K_1 &=  -\frac{1}{2}  \sum_{i=0}^{N-1} w_i \cdot \sum_{\substack{l,k \\l+k=N+1+i}} p_l p_k \quad  \text{with } w_0=1, \\
	V_1 &= \frac{c_{2N+1}}{w_N} + \sum_{i=1}^{N} w_i w_{N+1-i} + R(w_1, \ldots, w_{N-1}).
\end{align*}
Note that the fractional part $\frac{c_{2N+1}}{w_N}$ comes from $c_{2N+1} M^{-1}$, the contribution of $M^{2N}$ gives the other summands, where we used Lemma~\ref{lem:Calculationsk2N3N}:  $R$ is $\mathcal{O}_w(3)$ (i.e., each term contains at least three factors from $w$) and is polynomial of homogeneous degree $N+1$ in $w_1, \ldots, w_{N-1}$. In particular, $R$ does not depend on $w_N$. We can consider this as a homogeneous potential of degree $N+1$, keeping in mind that $\deg(c_{2N+1}) = 2N+1$.

With this setup, we consider for fixed $N$ and $k=2N-2$ (the approximation order) the vector
\begin{equation}
    (w_N, (w_N)_{x}, \ldots, (w_N)_{k\cdot x})^T \in \R^{2N-1}.
\end{equation}
In contrast to the case $m \equiv 1$, each component $(w_N)_{jx}$ has 
the same degree $N$ (in view of 
Proposition~\ref{prop:DifferentiationDegree}), and so we cannot 
guarantee that new variables appear at each differentiation step.

Therefore we instead aim to compute the Jacobian of this function of $(w,p)$ with respect to the variables $(w_1, \ldots, w_{N-1}, p_1, \ldots, p_{N-1})$ (we choose a more convenient ordering later). This is a rational matrix in $(w,p)$. We would like this matrix to be invertible at a suitable point, so that we get $k$-jet surjectivity onto an open set in $J^k$. We shall show that the Jacobi matrix evaluated at $0 = w_1 = w_2 = \ldots = w_{N-1} = p_1 = \ldots = p_N$ is nondegenerate for each $N$. 

\textbf{Notation}. Let $\varphi$ be a function or a matrix. We denote by $\varphi|_{\mathrm{all}\ 0}$ the result obtained from substituting $0 = w_1 = \dots = w_{N-1} = p_1 = \dots = p_N$ (leaving only $w_N$ remaining as well as the constant $c_{2N+1}$). 

By the rationality of the Taylor coefficients, nondegeneracy of Jacobian at one point implies that the matrix is invertible for almost every point. Over the reals, however, this does not imply full $k$-jet surjectivity.

We now reorder the variables $(w_1, \dots, w_{N-1}, p_1, \dots, p_{N-1})$ according to increasing degree.
\begin{definition}[\textbf{Variable ordering}] \quad \\
	We define the variables $v_1, \ldots, v_{2N-2}$ by
	\begin{align*}
		v_j := \begin{cases}
			w_{\frac{j+1}{2}} \quad & \text{$j$ odd} \\
			p_{N-\frac{j}{2}} \quad & \text{$j$ even}
		\end{cases}.
	\end{align*}
	We are interested in the Jacobian matrix
    \begin{equation}
        J:=\left(\frac{\partial (w_N)_{i\cdot x}}{\partial v_j}_{\mid \text{all 0}} \right)_{i,j = 1\ldots, 2N-2}.
    \end{equation}
\end{definition}

\begin{remark}
	We can use $w_N(0)$ to fix the constant term. This is why we exclude $w_N$ from the variables and start with the first derivative. This ordering of the variables will lead to an anti-triangular structure, see Example \ref{ex:AntitriangularStructure}. One could get a usual lower triangle structure by reversing the ordering. We stick to the choice of ordering variables with increasing degree.
\end{remark}

We now start with the computations. In order to illustrate what we prove in upcoming lemmas we shall make use of explicit examples in the case $N = 6$ and indicate with various symbols which entries we can compute (for general $N$). 

The Jacobian $J$ for $N = 6$ has the following form:
{ \small
	\begin{equation}\label{ex:AntitriangularStructure}
		J = \begin{pmatrix}
			\circled{0} & \circled{0} & \circled{0} & \circled{0} & \circled{0} & \circled{0} & \circled{0} & \circled{0} & \dashedcircle{0} & \dashedcircle{-1} 
			\\
			\circled{0} & \circled{0} & \circled{0} & \circled{0} & \circled{0} & \circled{0} & \circled{0} & \circled{0} & \dashedbox{\frac{c_{13}}{w_6^{2}}} & 0 
			\\
			\circled{0} & \circled{0} & \circled{0} & \circled{0} & \circled{0} & \circled{0} & \dashedcircle{0} & \dashedcircle{-\frac{2 c_{13}}{w_6^{2}}} & 0 & -2 
			\\
			\circled{0} & \circled{0} & \circled{0} & \circled{0} & \circled{0} & \circled{0} & \dashedbox{\frac{2 c_{13}^{2}}{w_6^{4}}} & 0 & \frac{2 c_{13}}{w_6^{2}} & 0 
			\\
			\circled{0} & \circled{0} & \circled{0} & \circled{0} & \dashedcircle{0} & \dashedcircle{-\frac{4 c_{13}^{2}}{w_6^{4}}} & 0 & \frac{8 c_{13}}{w_6^{2}} & 0 & -4 
			\\
			\circled{0} & \circled{0} & \circled{0} & \circled{0} & \dashedbox{\frac{4 c_{13}^{3}}{w_6^{6}}} & 0 & -\frac{28 c_{13}^{2}}{w_6^{4}} & 0 & \frac{4 c_{13}}{w_6^{2}} & 0 
			\\
			\circled{0} & \circled{0} & \dashedcircle{0} & \dashedcircle{-\frac{8 c_{13}^{3}}{w_6^{6}}} & 0 & \frac{136 c_{13}^{2}}{w_6^{4}} & 0 & -\frac{248 c_{13}}{w_6^{2}} & 0 & -8 
			\\
			\circled{0} & \circled{0} & \dashedbox{\frac{8 c_{13}^{4}}{w_6^{8}}} & 0 & -\frac{248 c_{13}^{3}}{w_6^{6}} & 0 & \frac{1032 c_{13}^{2}}{w_6^{4}} & 0 & \frac{8 c_{13}}{w_6^{2}} & 0 
			\\				\dashedcircle{0} & \dashedcircle{-\frac{16 c_{13}^{4}}{w_6^{8}}} & 0 & \frac{832 c_{13}^{3}}{w_6^{6}} & 0 & -\frac{7776 c_{13}^{2}}{w_6^{4}} & 0 & \frac{11072 c_{13}}{w_6^{2}} & 0 & -16 
			\\
			\dashedbox{\frac{16 c_{13}^{5}}{w_6^{10}}} & 0 & -\frac{1264 c_{13}^{4}}{w_6^{8}} & 0 & \frac{20496 c_{13}^{3}}{w_6^{6}} & 0 & -\frac{61424 c_{13}^{2}}{w_6^{4}} & 0 & \frac{16 c_{13}}{w_6^{2}} & 0 
			\end{pmatrix}
		\end{equation}
	}

We now prove the anti-triangular structure of \cref{ex:AntitriangularStructure} in general. We formulate it as the following proposition and the proof of this proposition will be given in the rest of the section as a combination of lemmas 6-10. Note also that the theorem for Camassa--Holm follows directly from this proposition.

\begin{prop}[\textbf{Structure of the Jacobian}]\label{prop:structure_jacobian} \quad \\
	For each $i,j \in \{1, \ldots, 2N-2\}$ we have:
	\begin{itemize}
		\item If $j<2N-1-i$, then $\left(\frac{\partial (w_N)_{i\cdot x}}{\partial v_j}\right)_{\mid \text{all 0}} = 0$.
		\item If $j=2N-1-i$, then $\left(\frac{\partial (w_N)_{i\cdot x}}{\partial v_j}\right)_{\mid \text{all 0}} \neq 0$, more precisely it is $(-1)^{i} \cdot 2^{\lfloor \frac{i-1}{2} \rfloor} \cdot \left( \frac{c_{2N+1}}{w_N^2} \right)^{\lfloor \frac{i}{2} \rfloor}$.
	\end{itemize}
\end{prop}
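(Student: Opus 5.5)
We treat $w_N$ and $p_N$ as \emph{background} variables and the $2N-2$ variables $v_1,\dots,v_{2N-2}$ as \emph{small} variables. Every Taylor coefficient $(w_N)_{i\cdot x}$ is a rational function of $(w,p)$, with $w_N$ only in denominators coming from $\frac{c_{2N+1}}{w_N}$. We expand it as
\begin{equation*}
(w_N)_{i\cdot x} = B_i(w_N,p_N) + \sum_{j} A_{ij}(w_N,p_N)\, v_j + \mathcal{O}(2),
\end{equation*}
where $\mathcal{O}(2)$ now means at least quadratic in the small variables. Then $J_{ij}=A_{ij}(w_N,0)$. Differentiating once with the Hamiltonian equations of $H_1$ gives a recursion for the linear forms $\ell_i:=\sum_j A_{ij}v_j$:
\begin{equation*}
\ell_{i+1} = \sum_j \big( (A_{ij})_x\, v_j + A_{ij}\,(v_j)_x^{\mathrm{lin}} \big) + (\text{linear part of } (B_i)_x).
\end{equation*}
Here $(v_j)_x^{\mathrm{lin}}$ is the part of $(v_j)_x$ that is linear in small variables, with coefficients depending on $(w_N,p_N)$. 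From the explicit $K_1$ and $V_1$ we read off the basic moves:
\begin{itemize}
\item $(w_a)_x=-p_{N+1-a}-w_{a-1}p_N+\dots$, so $v_{2a-1}\mapsto -v_{2a-2}-p_N v_{2a-3}$;
\item $(p_b)_x$ has linear part $-2w_{N+1-b}$ (for $b\ge 2$) plus $p_N$-multiples of neighbouring $p$'s coming from $K_1$;
\item $(w_N)_x=-p_1-w_{N-1}p_N$ and $(p_N)_x=\frac{c_{2N+1}}{w_N^2}+\mathcal{O}(2)$.
\end{itemize}
The key structural observation is that $R$ is $\mathcal{O}_w(3)$ and independent of $w_N$, so it never contributes to the linear forms.

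\textbf{Vanishing below the antidiagonal.} I would first record the degree constraint. By Proposition~\ref{prop:DifferentiationDegree}, Case (ii), every $(w_N)_{i\cdot x}$ is homogeneous of degree $N$. Hence $A_{ij}$ is homogeneous of degree $N-\deg(v_j)$ in $w_N$, $p_N$ and $c_{2N+1}$. After setting $p_N=0$, only monomials $c_{2N+1}^s w_N^{t}$ survive, and the degree count forces $t=-2s$, i.e.\ a pure power $\big(\frac{c_{2N+1}}{w_N^2}\big)^{s}$ with $s=N-\deg v_j$.

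I then prove by induction on $i$ a support statement: $\ell_i$ involves only $v_j$ with $j\ge 2N-1-i$. Moreover, the coefficient of $v_{2N-1-i}$ is, modulo terms carrying a factor $p_N$, exactly the quantity claimed.

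For the induction step, each basic move lowers the index of a small variable by at most one. A move that lowers the index by two or more, such as $v_{2a-1}\mapsto p_N v_{2a-3}$, carries a factor $p_N$. To reach index $2N-2-i$ or below, such a term needs that $p_N$ to be differentiated into $\frac{c_{2N+1}}{w_N^2}$, and this costs one more step. I would encode this by giving $p_N$ ``index cost'' one and checking that the total index drop per $x$-derivative is at most one. The linear parts of $(B_i)_x$ must also be checked: they arise from $\partial_{p_N}$ and $\partial_{w_N}$ of background terms multiplied by the linear parts of $(w_N)_x$ and $(p_N)_x$, namely $-p_1=-v_{2N-2}$ and the $w_{N-1}p_N$ term. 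These have high index and are harmless.

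\textbf{Antidiagonal entries.} Tracking only the index-lowering-by-one moves gives a two-step recursion. From a $w$-variable $v_{2a-1}$ to $v_{2a-2}$ the factor is $-1$. From a $p$-variable the move to the next $w$-variable is not direct; it goes through the $p_N$-term $-p_Nw_{a-1}$ combined with $(p_N)_x=\frac{c_{2N+1}}{w_N^2}$. Combining the two moves per pair of derivatives should produce the factor $-2\frac{c_{2N+1}}{w_N^2}$, with the $2$ coming from the quadratic $\sum w_iw_{N+1-i}$ in $V_1$ and from $p_lp_k$ symmetry in $K_1$. This matches $(-1)^i2^{\lfloor (i-1)/2\rfloor}\big(\frac{c_{2N+1}}{w_N^2}\big)^{\lfloor i/2\rfloor}$ on the cases $i=1,2$ from $(w_N)_x=-p_1-\dots$ and $(w_N)_{xx}\ni \frac{c_{2N+1}}{w_N^2}w_{N-1}$.

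\textbf{Main obstacle.} The hard part is the bookkeeping in the antidiagonal step. Terms of the form $p_N\cdot(\text{lower index})$ with coefficient vanishing at $p_N=0$ feed into later antidiagonal coefficients once $p_N$ is differentiated. One must verify that exactly one such path contributes at each step and that the surviving factor is $2\frac{c_{2N+1}}{w_N^2}$, not some other multiple. I would attack this by writing the recursion explicitly for the pair $(\alpha_i,\beta_i)$, where $\alpha_i$ is the antidiagonal coefficient and $\beta_i$ is the coefficient of $p_N\,v_{2N-2-i}$. I would then prove the closed form for both simultaneously by induction, using the $N=6$ matrix \eqref{ex:AntitriangularStructure} as a check.
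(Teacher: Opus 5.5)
\textbf{There is a genuine gap: your recursion for the linear forms $\ell_i$ does not close, and it gives wrong antidiagonal values already at $i=3$.}

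Your reduction to $J_{ij}=A_{ij}(w_N,0)$ is fine, and so is your list of linear moves, with one correction: $(p_N)_x$ contains the linear term $-2w_1$, so it is not $\frac{c_{2N+1}}{w_N^2}+\mathcal{O}(2)$. The real problem is that your formula for $\ell_{i+1}$ keeps only the linear parts of $(v_j)_x$ and $(B_i)_x$. The flow of $H_1$ is not tangent to $\{v=0\}$: there one has $(w_1)_x=-p_N$, $(p_{N-1})_x\ni\tfrac12p_N^2$ and $(p_1)_x\ni-2w_N$. So the $\mathcal{O}(2)$ part of $(w_N)_{i\cdot x}$ feeds linear terms into $(w_N)_{(i+1)x}$.

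Here is the failure concretely. Take $N\ge4$ and $\kappa:=c_{2N+1}/w_N^2$, with the sign $((p_N)_x)_{\mid\text{all 0}}=-\kappa$ used in Lemma~\ref{lem:SimpleDerivativesNonzeroValues}.
\begin{itemize}
\item Your moves give $\ell_1=-p_1-p_Nw_{N-1}$ and $\ell_2=\kappa w_{N-1}+p_N^2w_{N-2}$, since the two $p_Np_2$ terms cancel. You therefore get $-\kappa$ for the $p_2$-entry of $(w_N)_{3x}$.
\item The quadratic term $-w_1p_2$ of $(w_N)_x$ produces an extra $+p_Np_2$ in $(w_N)_{2x}$. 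The quadratic term $-w_{N-2}p_{N-1}$ produces an extra $-\tfrac12p_N^2w_{N-2}$. So the true $\ell_2$ is $\kappa w_{N-1}+p_Np_2+\tfrac12p_N^2w_{N-2}$.
\item The first correction yields the correct entry $-2\kappa$, as stated in the proposition and visible in the $N=6$ matrix.
\item The second correction contributes $-\kappa^2$ to the $(4,2N-5)$ entry. Your recursion returns $3\kappa^2$ there, while the correct value is $2\kappa^2$.
\end{itemize}
For the same reason, your remark that $R$ never enters the linear forms is wrong: $\partial R/\partial w_b$ contains monomials with a factor $w_1$, and $(w_1)_x=-p_N$ turns these into linear terms. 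Your index-cost argument for the vanishing region likewise covers only part of the terms that must be controlled.

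Consequently, it is not true that exactly one path contributes per step. A recursion for the pair $(\alpha_i,\beta_i)$ cannot close either. The antidiagonal entry at step $i$ depends on the coefficients of $p_N^r v_{2N-1-i}$ in $\ell_{i-r}$ for every $r$, plus all the nonlinear feedback. Doing this correctly means tracking the full expansion in the small variables, not just linear forms.

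The missing idea is to face the non-stationarity of the point $\mathrm{all}\ 0$ directly, as the paper does:
\begin{itemize}
\item Lemmas~\ref{lem:simpleDerivativesAtAll0} and~\ref{lem:SimpleDerivativesNonzeroValues} establish the vanishing pattern and compute $((w_a)_{2a\cdot x})_{\mid\text{all 0}}=2^{a-1}\kappa^a$ and $((p_{N+1-a})_{(2a-1)x})_{\mid\text{all 0}}=(-1)^at_a\kappa^a$, where $t_a$ are the reduced tangent numbers.
\item Lemmas~\ref{lem:FirstZeros}--\ref{lem:lemma10} expand $(w_i)_{j\cdot x}$ by the product rule through $(w_i)_x=-\sum_a w_ap_{N+1-(i-a)}$. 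Index inequalities then produce the zero region of $J$, including the contributions of $R$.
\item The antidiagonal entries come out as convolutions such as $-\sum_l\binom{2b-1}{2l}s_l(-1)^{b-l}t_{b-l}$. These equal $2^{b-1}$ only through the generating-function identity $f'=2f\tanh$ of Lemma~\ref{lem:solutionRecursionRelation}.
\end{itemize}
At $i=4$ this is the split $2\kappa^2=3\kappa^2-\kappa^2$, where $-\kappa^2$ is the $t_2$-term. Your heuristic, that the factor $2$ comes from $\sum w_iw_{N+1-i}$ and from $p_lp_k$ symmetry, does not capture this.

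A smaller point: homogeneity alone does not force $A_{ij}(w_N,0)$ to be a pure power of $\kappa$. For example, $c_{2N+1}^{s}w_N^{-2s-1}$ with $s=2N-\deg v_j$ has the same degree, so you would also need a bound on the number of factors $c_{2N+1}$.
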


In order to prove the proposition, it is useful to consider the following terms $((w_i)_{jx})_{\mid \text{all 0}}$ and $((p_{N+1-i})_{jx})_{\mid \text{all 0}}$ for $i=1,\ldots, N-1$ and $j=1,\ldots, 2N-2$. This is because the derivatives of $w_N$ will consist of products of these terms. Using the product rule for differentiation with respect to $v_j$ we can ignore some of these terms if they are zero (after the evaluation).

\begin{example}\label{example:vars_at_all0}
	We consider these expressions in the example $N=6$. We depict the matrix with entry $(i,j)$ being $((w_i)_{jx})_{\mid \text{all 0}}$ respectively for the momenta. It is important that these matrices are not Jacobians, as we do not differentiate with respect to the variables.
	\begin{align*}
		((w_i)_{jx})_{\mid \text{all 0}} &= \begin{pmatrix}
			\circled{0}& \boxed{\frac{c_{13}}{w_6^{2}}} &0&-\frac{2 c_{13}}{w_6^{2}}&0&\frac{60 c_{13}}{w_6^{2}}&0&-\frac{2056 c_{13}}{w_6^{2}}&0&\frac{122864 c_{13}}{w_6^{2}} \\
			\circled{0} & \circled{0}
            &\circled{0}&\boxed{\frac{2 c_{13}^{2}}{w_6^{4}}}&0&-\frac{36 c_{13}^{2}}{w_6^{4}}&0&\frac{1528 c_{13}^{2}}{w_6^{4}}&0&-\frac{102416 c_{13}^{2}}{w_6^{4}} \\
			\circled{0}&\circled{0}&\circled{0}&\circled{0}&\circled{0}&\boxed{\frac{4 c_{13}^{3}}{w_6^{6}}}&0&-\frac{264 c_{13}^{3}}{w_6^{6}}&0&\frac{23024 c_{13}^{3}}{w_6^{6}} \\
			\circled{0}&\circled{0}&\circled{0}&\circled{0}&\circled{0}&\circled{0}&\circled{0}&\boxed{\frac{8 c_{13}^{4}}{w_6^{8}}}&0&-\frac{1296 c_{13}^{4}}{w_6^{8}}\\
			\circled{0}&\circled{0}&\circled{0}&\circled{0}&\circled{0}&\circled{0}&\circled{0}&\circled{0}&\circled{0}&\boxed{\frac{16 c_{13}^{5}}{w_6^{10}}}
		\end{pmatrix}
		\\
		((p_{N+1-i})_{jx})_{\mid \text{all 0}} &= \begin{pmatrix}
			\boxed{-\frac{c_{13}}{w_6^{2}}}&0&\frac{2 c_{13}}{w_6^{2}}&0&-\frac{60 c_{13}}{w_6^{2}}&0&\frac{2056 c_{13}}{w_6^{2}}&0&-\frac{122864 c_{13}}{w_6^{2}}&0 \\
			\circled{0}&\circled{0}&\boxed{\frac{c_{13}^{2}}{w_6^{4}}}&0&\frac{6 c_{13}^{2}}{w_6^{4}}&0&\frac{292 c_{13}^{2}}{w_6^{4}}&0&-\frac{15304 c_{13}^{2}}{w_6^{4}}&0 \\
			\circled{0}&\circled{0}&\circled{0}&\circled{0}&\boxed{-\frac{4 c_{13}^{3}}{w_6^{6}}}&0&-\frac{184 c_{13}^{3}}{w_6^{6}}&0&-\frac{2144 c_{13}^{3}}{w_6^{6}}&0 \\
			\circled{0}&\circled{0}&\circled{0}&\circled{0}&\circled{0}&\circled{0}&\boxed{\frac{34 c_{13}^{4}}{w_6^{8}}}&0&\frac{5376 c_{13}^{4}}{w_6^{8}}&0 \\
			\circled{0}&\circled{0}&\circled{0}&\circled{0}&\circled{0}&\circled{0}& \circled{0}&\circled{0}&\boxed{-\frac{496 c_{13}^{5}}{w_6^{10}}}&0
		\end{pmatrix}
	\end{align*}
\end{example}

We can prove the following lemma 6 which accounts for the `lower left almost-triangular' part of the matrix containing zeros (circled in the example above). In lemma 7 we compute the first nonzero entry in each row (boxed in the example above).

\begin{lemma} \label{lem:simpleDerivativesAtAll0}
	For all $i \in \{1, \ldots, N-1\}$ we have:
	\begin{itemize}
		\item For all $k<2\cdot i$: $((w_i)_{kx})_{\mid \text{all 0}} =0$.
		\item For all $k<2\cdot i-1$: $((p_{N+1-i})_{kx})_{\mid \text{all 0}}= 0$.
	\end{itemize}
\end{lemma}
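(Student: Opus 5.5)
The plan is to introduce a second weight $\omega$ that measures ``distance from the point all $0$'' and to show that each $x$-derivative lowers the minimal $\omega$-weight by at most one. Both statements of Lemma~\ref{lem:simpleDerivativesAtAll0} then follow by counting. Concretely, I would set
\begin{equation*}
\omega(w_i) := 2i,\qquad \omega(p_{N+1-i}) := 2i-1 \qquad (1\le i\le N-1),
\end{equation*}
together with $\omega(w_N):=0$, $\omega(p_1):=0$ and $\omega(c_{2N+1}):=0$. All Taylor coefficients are polynomials in $w_1,\dots,w_{N-1},p_1,\dots,p_N$ whose coefficients are Laurent polynomials in $w_N$ and polynomials in $c_{2N+1}$. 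For such an expression $F$, write $F\in\mathcal{W}_{\ge s}$ if every monomial of $F$ in $w_1,\dots,w_{N-1},p_1,\dots,p_N$ has $\omega$-weight at least $s$. Any $F\in\mathcal{W}_{\ge s}$ with $s>0$ vanishes at ``all 0''. The case $i=N$ of the $p$-variable, namely $p_1$, is deliberately given weight $0$, because it is not among the variables the lemma is about.

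The key claim is that $\mathcal{W}_{\ge s}$ is mapped into $\mathcal{W}_{\ge s-1}$ by $D_x$ along the flow of $H_1$. By the Leibniz rule it suffices to check that for every variable $z$ we have $z_x\in\mathcal{W}_{\ge \omega(z)-1}$. This is automatic for $w_N$ and $p_1$, since all weights are nonnegative. For the remaining variables I would go through the explicit formulas for $K_1$ and $V_1$ monomial by monomial.
\begin{itemize}
\item[(a)] In $K_1$, consider a term $w_i p_l p_k$ with $l+k=N+1+i$ and $l,k\ge 2$. Its weight is $2i+(2(N+1-l)-1)+(2(N+1-k)-1)=2N$. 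Differentiating with respect to $p_j$ removes weight $2(N+1-j)-1$. Differentiating with respect to $w_j$ removes weight $2j$. In both cases the result has exactly weight $\omega(w_j)-1$, respectively $\omega(p_j)-1$, as required.
\item[(b)] The only terms of $K_1$ containing $p_1$ are those with $i=0$, namely $p_1p_N$. Taking $\partial/\partial p_1$ gives $(w_1)_x\ni p_N$, which has weight $1=\omega(w_1)-1$. Taking $\partial/\partial p_N$ only affects $(w_N)_x$.
\item[(c)] In $V_1$, the term $c_{2N+1}/w_N$ only contributes to $(p_N)_x$, with weight $0=\omega(p_N)-1$.
\item[(d)] A quadratic term $w_aw_b$ with $a+b=N+1$ and $a,b\le N-1$ gives $\partial/\partial w_j=w_{N+1-j}$, of weight $2N+2-2j=\omega(p_j)-1+2$.
\item[(e)] The term $w_1w_N$ only feeds into $(p_1)_x$ and $(p_N)_x$. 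The first is irrelevant, and for the second $w_1$ has weight $2\ge 0$.
\item[(f)] Every monomial of $R$ is homogeneous of grading degree $N+1$ in $w_1,\dots,w_{N-1}$, hence of $\omega$-weight $2N+2$. Its $w_j$-derivative therefore has weight $2N+2-2j\ge\omega(p_j)-1$.
\end{itemize}

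Granting the claim, the lemma follows by induction on $k$. We have $(w_i)_{kx}\in\mathcal{W}_{\ge 2i-k}$ and $(p_{N+1-i})_{kx}\in\mathcal{W}_{\ge 2i-1-k}$. Hence both vanish at ``all 0'' whenever $k<2i$, respectively $k<2i-1$. The expected obstacle is purely bookkeeping: the boundary indices ($w_0=1$, $p_1$, $w_N$, and the term $w_1w_N$) are exactly where a naive weight assignment fails. For instance, giving $p_1$ its ``natural'' weight $2N-1$ breaks the claim through $\partial V_1/\partial w_1\ni w_N$. So the main care goes into verifying that setting $\omega(p_1)=0$ does not spoil the checks for the variables with $i\le N-1$, which is what items (b) and (e) do.
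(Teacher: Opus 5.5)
Your argument is correct, and it is organized differently from the paper's. The paper runs a strong induction on $i$. From the explicit formulas for $(w_i)_x$ and $(p_{N+1-i})_x$, it uses the induction hypothesis on the lower-index variables, via the product rule, to discard every nonlinear term: a monomial of grading degree $d$ in $w_1,\dots,w_{i-1}$ can survive at $|_{\mathrm{all}\ 0}$ only after at least $2d$ derivatives, and similarly for the quadratic $p$-terms. What remains is the linear chain relating $((p_{N+1-i})_{kx})|_{\mathrm{all}\ 0}$ to $((w_i)_{(k-1)x})|_{\mathrm{all}\ 0}$, and $((w_i)_{kx})|_{\mathrm{all}\ 0}$ to $((p_{N+1-i})_{(k-1)x})|_{\mathrm{all}\ 0}$. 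An inner induction on $k$ then finishes.

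You instead check once, on Hamilton's equations, that the flow derivative lowers your filtration by at most one, and a single induction on $k$ then suffices. The counting is the same: your weights $2i$ and $2i-1$ are exactly the thresholds the paper's nested induction produces. But your version confines the boundary effects ($w_0=1$, $p_1$, $w_N$, the term $2w_1w_N$) to a few explicit checks. It also proves more than pointwise vanishing, namely $(w_i)_{kx}\in\mathcal{W}_{\ge 2i-k}$ and $(p_{N+1-i})_{kx}\in\mathcal{W}_{\ge 2i-1-k}$ as polynomials. Since $\partial/\partial v$ maps $\mathcal{W}_{\ge s}$ into $\mathcal{W}_{\ge s-\omega(v)}$, this gives, for instance, the first bullet of Lemma~\ref{lem:FirstZeros} for $i\le N-1$ at no extra cost.

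What your filtration cannot reach is $i=N$, because $w_N$ and $p_1$ are forced to carry weight $0$. That is exactly the case the Jacobian $J$ requires. There the paper's pointwise bookkeeping, which only uses values of lower-index variables at $|_{\mathrm{all}\ 0}$, still goes through.
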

\begin{proof}
	We prove the statement by strong induction on $i$. The base case is $i=1$. For $p_{N}$ there is only $k=0$ to be considered, but $(p_N)_{\text{all 0}} = 0$ holds since $p_N$ is one of the variables being set to zero (cf. the notation $|_{\mathrm{all}\ 0}$).
	For $w_1$, we have to consider $k=0$ (which is again obvious) and $k=1$, which follows from $(w_1)_x = -p_N$.

	For the induction hypothesis, assume that the statement is proven for all indices $j<i$. 
    Now we need to prove it for $i := j + 1\, (< N)$. Consider $p_{N+1-i}$ and $w_i$. The 
    statement for $k=0$ is trivial, so let us start by differentiating once. We obtain the 
    following two expressions
	\begin{equation}
    \begin{split}
		(p_{N+1-i})_{x} &= -2w_i + \frac{1}{2} \sum_{\substack{l,k \\l+k=N+1+(N+1-i)}} p_l p_k - \frac{\partial R}{\partial w_{N+1-i}}, \\
		(w_i)_{x} &= -p_{N+1-i} - \sum_{j=1}^{i-1} w_j p_{N+1-i+j}.
        \end{split}
	\end{equation}
    In order to establish vanishing of the first quantity, note that $\frac{\partial R}{\partial w_{N+1-i}}$ and $\sum_{\substack{l,k \\l+k=N+1+(N+1-i)}} p_l p_k$ are both homogeneous polynomials of degree $i$, and of type $\mathcal{O}_w(2)$ and $\mathcal{O}_p(2)$, respectively. Thus, $\frac{\partial R}{\partial w_{N+1-i}}$ can only depend on $w_1, \dots, w_{i-1}$, and applying the induction hypothesis to each factor gives vanishing at $|_{\mathrm{all}\ 0}$. Similarly for the other term. We now show vanishing of the higher derivatives of $p_{N-i+1}$ at $|_{\mathrm{all}\ 0}$ and relate these to the derivatives of 
    $w_i$. The induction hypothesis implies the following statement: For every polynomial $\Phi(w_1, \ldots, w_{i-1})$ of degree $d$ and every $l<2d$ we have
	\begin{align*}
		\left(\left(\Phi(w_1, \ldots, w_{i-1})\right)_{lx}\right)_{\mid \text{all 0}} = 0.
	\end{align*}
    This is most easily seen for monomial expressions using the product rule. Indeed, differentiating a monomial of degree $j$ less than $2j$ times results in vanishing at $|_{\mathrm{all}\ 0}$.

	A similar argument holds for higher derivatives of $\sum_{\substack{l,k \\l+k=N+1+(N+1-i)}} p_l p_k$, as follows. By the induction hypothesis, differentiating $p_{N+1-j}$ less than $2j-1$ times results in vanishing at $|_{\mathrm{all}\ 0}$. This term of degree $i$ will vanish if we differentiate less than $2i-2$ times. This condition is satisfied in view of the condition $k<2\cdot i-1$.

	So we obtain for $k$ restricted as in the lemma:
	\begin{align*}
		((p_{N+1-i})_{kx})_{\mid \text{all 0}} &= ((w_{i})_{(k-1)x})_{\mid \text{all 0}}, \\
		((w_{i})_{kx})_{\mid \text{all 0}} &= ((p_{N+1-i})_{(k-1)x})_{\mid \text{all 0}},
	\end{align*}
	as the other terms vanish by the induction hypothesis. The lemma now follows immediately from induction on $k$.
\end{proof}

In the \cref{example:vars_at_all0} we see that there are nonzero values in the matrix, indicated by a box around the entries. These correspond to $((w_i)_{(2i)x})|_{\mathrm{all}\ 0}$ and $((p_{N+1-i})_{(2i-1)x})|_{\mathrm{all}\ 0}$, respectively. In the next lemma, we show that their nonvanishing holds for general $N$.

\begin{lemma} \label{lem:SimpleDerivativesNonzeroValues} \quad
	For all $i \in \{1, \ldots, N-1\}$ we have:
	\begin{itemize}
		\item $((w_i)_{2i\cdot x})_{\mid \text{all 0}} = 2^{i-1} \cdot \left(\frac{c_{2N+1}}{w_N^2}\right)^{i}$.
		\item $((p_{N+1-i})_{(2i-1)x})_{\mid \text{all 0}}= (-1)^{i} \cdot t_i \cdot \left(\frac{c_{2N+1}}{w_N^2}\right)^{i}$, where $t_i$ are the reduced tangent numbers.
	\end{itemize}
\end{lemma}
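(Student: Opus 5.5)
We set $\lambda := c_{2N+1}/w_N^2$ and write $W_i := ((w_i)_{2i\cdot x})_{\mid \text{all 0}}$ and $P_i := ((p_{N+1-i})_{(2i-1)x})_{\mid \text{all 0}}$. The plan is to redo the bookkeeping in the proof of Lemma~\ref{lem:simpleDerivativesAtAll0}, this time at the threshold order $k = 2i$ (respectively $2i-1$), where that lemma stops. The counting principle is the same. By Lemma~\ref{lem:simpleDerivativesAtAll0}, each factor $w_j$ survives the evaluation at $\mathrm{all}\ 0$ only after at least $2j$ differentiations, and each factor $p_{N+1-j}$ only after at least $2j-1$. Hence, when we expand a product by the Leibniz rule, only those distributions of derivatives survive in which every factor receives exactly its threshold number. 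This turns the two equations of motion written in that proof into closed recursions for $W_i$ and $P_i$. I will do the induction on $i$ jointly for both quantities, as in Lemma~\ref{lem:simpleDerivativesAtAll0}. I also use that $w_N$ is constant to the relevant order at $\mathrm{all}\ 0$, so $\lambda$ behaves as a constant.

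\textbf{Recursion for $P_i$.} We differentiate $(p_{N+1-i})_x = -2w_i + \tfrac12\sum_{l+k=2N+2-i} p_lp_k - \partial R/\partial w_{N+1-i}$ a further $2i-2$ times.
\begin{itemize}
\item The term $-2w_i$ needs $2i$ derivatives, so it vanishes.
\item The term $\partial R/\partial w_{N+1-i}$ has weighted degree $i$ in $w_1,\dots,w_{i-1}$ and at least two factors, so it also needs $2i$ derivatives and vanishes.
\item The quadratic $p$-terms are $p_{N+1-a}p_{N+1-b}$ with $a+b=i$. They need exactly $(2a-1)+(2b-1)=2i-2$ derivatives, so they survive.
\end{itemize}
This gives, for $i\ge 2$,
\begin{equation*}
P_i = \tfrac12\sum_{a+b=i}\binom{2i-2}{2a-1}P_aP_b .
\end{equation*}
The base case $i=1$ is $P_1 = ((p_N)_x)_{\mid \text{all 0}} = -\lambda$. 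This has to come from the $w_1$-dependence of the potential through the term $c_{2N+1}M^{-1}$, so I will recompute $\partial V_1/\partial w_1$ carefully. I expect this base case, including its sign, to be the only genuinely delicate point in this recursion.

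\textbf{Recursion for $W_i$.} We differentiate $(w_i)_x = -p_{N+1-i}-\sum_{j=1}^{i-1}w_jp_{N+1-(i-j)}$ a further $2i-1$ times. Each product $w_jp_{N+1-(i-j)}$ survives exactly when $w_j$ receives $2j$ derivatives and the $p$-factor receives the remaining $2(i-j)-1$. This yields
\begin{equation*}
W_i = -P_i - \sum_{j=1}^{i-1}\binom{2i-1}{2j}W_jP_{i-j}.
\end{equation*}

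\textbf{Solving via generating functions.} Both recursions are binomial convolutions, so I pass to exponential generating functions
\begin{equation*}
\mathcal P(x)=\sum_a P_a\frac{x^{2a-1}}{(2a-1)!},\qquad \mathcal W(x)=\sum_j W_j\frac{x^{2j}}{(2j)!}.
\end{equation*}
The recursions become the ODEs
\begin{equation*}
\mathcal P' = -\lambda + \tfrac12\mathcal P^2,\qquad (1+\mathcal W)' = -\mathcal P\,(1+\mathcal W).
\end{equation*}
The first ODE is a Riccati equation whose solution is $\mathcal P(x) = -\sqrt{2\lambda}\tan\bigl(\sqrt{\lambda/2}\,x\bigr)$, or the corresponding $\tanh$ form depending on signs. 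Reading off the coefficients gives $(-1)^i t_i\lambda^i$ with the reduced tangent numbers $t_i$, after normalising the powers of $2$ hidden in $\sqrt{\lambda/2}$. The second ODE then gives $1+\mathcal W = \exp\bigl(-\int_0^x\mathcal P\bigr)$, which is a secant/cosine-type function. Matching its Taylor coefficients against $2^{i-1}\lambda^i$ is the final step.

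I expect this identification to be the main obstacle. A naive secant expansion produces Euler numbers rather than pure powers of $2$. So either the signs work out so that $1+\mathcal W$ is essentially $1/\cos$ squared or a $\cosh$-type function with elementary coefficients, or I have misidentified a term. If the coefficients do not match, I will first recheck the base case $P_1$ and the factor $\tfrac12$ together with the $\sum_{j}$-term in $K_1$, and verify against the $N=6$ table in Example~\ref{example:vars_at_all0}. If needed, I would instead prove $W_i = 2^{i-1}\lambda^i$ by direct induction from the recursion, using the tangent-number identity for $P_i$.
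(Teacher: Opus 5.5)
Your route is the paper's. Your two recursions come from applying the threshold counting of Lemma~\ref{lem:simpleDerivativesAtAll0} to the Leibniz expansions of $(p_{N+1-i})_x$ and $(w_i)_x$, and they are exactly the ones the paper derives. Your ODE system is the paper's generating-function argument: at $\lambda=2$ your equation $(1+\mathcal W)'=-\mathcal P(1+\mathcal W)$ is the equation $f'=2f\tanh x$ of Lemma~\ref{lem:solutionRecursionRelation}, and your Riccati equation encodes the recursion of Lemma~\ref{lem:RecursionReducedTangentNumbers}. The step you flagged as the main obstacle does close, but as written you leave it open, and three details point the wrong way.

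\textbf{Base case.} $p_N$ is conjugate to $w_N$, so the base case comes from $\partial V_1/\partial w_N$, not from $\partial V_1/\partial w_1$. The latter equals $2w_N+\partial R/\partial w_1$ and governs $(p_1)_x$. By Cayley--Hamilton, $M^{-1}=-w_N^{-1}(M^{N-1}+w_1M^{N-2}+\dots+w_{N-1}\mathrm{Id})$, so $c_{2N+1}M^{-1}$ contributes $-c_{2N+1}/w_N$ to $V_1$. This minus sign, together with the term $2w_1w_N$ of $v_{1,2N}$, gives $(p_N)_x=-c_{2N+1}/w_N^2-2w_1$ and hence $P_1=-\lambda$.

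\textbf{Riccati solution.} The problem $\mathcal P'=-\lambda+\tfrac12\mathcal P^2$, $\mathcal P(0)=0$, is solved by $-\sqrt{2\lambda}\tanh(\sqrt{\lambda/2}\,x)$. Your $\tan$ candidate instead satisfies $\mathcal P'=-\lambda-\tfrac12\mathcal P^2$. To avoid square roots, use that both recursions are homogeneous, so $P_i=\pi_i\lambda^i$ and $W_i=\omega_i\lambda^i$ with pure numbers $\pi_i,\omega_i$, and set $\lambda=2$.
\begin{itemize}
\item Then $\mathcal P=-2\tanh x$, and $\tanh x=\sum_i(-1)^{i-1}t_i2^{i-1}x^{2i-1}/(2i-1)!$ gives $\pi_i=(-1)^it_i$.
\item Also $1+\mathcal W=\exp(2\ln\cosh x)=\cosh^2x=\tfrac12(1+\cosh 2x)$. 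Its coefficients give $\omega_i2^i=2^{2i-1}$, i.e.\ $\omega_i=2^{i-1}$, directly.
\end{itemize}
No Euler-type numbers appear, so the identification you were worried about is immediate.

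\textbf{Constancy of $w_N$.} Drop the assertion that $w_N$ is constant to the relevant order at $\mathrm{all}\ 0$. It is false: the term $2w_1w_N$ gives $((p_1)_x)_{\mid \text{all 0}}=-2w_N$, hence $((w_N)_{2x})_{\mid \text{all 0}}=2w_N\neq 0$. It is also unnecessary. For $i\le N-1$, the variable $w_N$ does not occur in $(w_i)_x$, nor (for $i\ge 2$) in $(p_{N+1-i})_x$. So $\lambda$ enters only through the already evaluated numbers $P_a$, $W_j$ and is never differentiated.
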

\begin{proof}
	We start with the momenta. We shall derive a recursion formula involving the reduced tangent numbers. The base case is $\left((p_{N})_{x}\right)_{\mid \text{all 0}} = - \frac{c_{2N+1}}{w_N^2}$ as claimed. \\
	Now consider some $i>1$. Again we consider
	\begin{align*}
		(p_{N+1-i})_x &= -2w_i + \frac{1}{2} \sum_{\substack{l,k \\l+k=N+1+(N+1-i)}} p_l p_k - \frac{\partial R}{\partial w_{N+1-i}}.
	\end{align*}
	We note that from the above lemma, $\left((w_i)_{(2i-2) x} \right)_{\mid \text{all 0}} = 0$ and the same holds true for $\frac{\partial R}{\partial w_{N+1-i}}$. So the only contribution can come from $\frac{1}{2} \sum_{\substack{l,k \\l+k=N+1+(N+1-i)}} p_l p_k$, which we can rewrite as
	\begin{align*}
		\frac{1}{2} \sum_{\substack{l,k \\l+k=N+1+(N+1-i)}} p_l p_k = \frac{1}{2} \sum_{j=1}^{i-1} p_{N+1-j} p_{N+1-(i-j)}. 
	\end{align*}
    By \cref{lem:simpleDerivativesAtAll0}, $p_{N+1-j}$ vanishes at $|_{\mathrm{all}\ 0}$ if differentiated less than $2j-1$ times, and similarly $p_{N+1-(i-j)}$ if differentiated less than $2(i-j)-1$ times. Since we differentiate $(p_{N+1-i})_x$ an additional $2i - 2$ times, there is exactly one way for each summand to distribute these derivatives among the two factors without vanishing at $|_{\mathrm{all}\ 0}$. So we obtain:
	\begin{align*}
		\left((p_{N+1-i})_{(2i-1)\cdot x}\right)_{\mid \text{all 0}} &= \frac{1}{2} \sum_{j=1}^{i-1} \binom{2i-2}{2j-1} \left(\left(p_{N+1-j}\right)_{(2j-1) \cdot x}\right)_{\mid \text{all 0}} \left(\left(p_{N+1-(i-j)}\right)_{(2(i-j)-1) \cdot x}\right)_{\mid \text{all 0}}.
	\end{align*}
	Now we proceed by induction on $i$. We obtain the statement with a sequence $t_i$ that has to satisfy
	\begin{align*}
		t_i &= \frac{1}{2} \sum_{j=1}^{i-1} \binom{2i-2}{2j-1} t_j t_{i-j}.
	\end{align*}
    In \cref{lem:RecursionReducedTangentNumbers} in the appendix, we give a proof that this is one of the recursions for the reduced tangent numbers, so that the result follows.

	Next, we consider the statement for the $w_i$. Note that
	\begin{align*}
		(w_i)_{x} &= -p_{N+1-i} - \sum_{j=1}^{i-1} w_j p_{N+1-i+j} = -\sum_{j=0}^{i-1} w_j p_{N+1-i+j},
	\end{align*}
	with the convention $w_0:=1$. The base case $((w_1)_{2x})|_{\mathrm{all}\ 0} = \frac{c_{2N+1}}{w_N^2}$ follows readily from the above formula. A similar inductive 
    argument yields:
	\begin{align*}
		((w_i)_{2i\cdot x})_{\mid \text{all 0}} &= - \sum_{j=0}^{i-1} \binom{2i-1}{2j} ((w_j)_{2j\cdot x})_{\mid \text{all 0}} \left(\left(p_{N+1-i+j}\right)_{(2(i-j)-1)x}\right)_{\mid \text{all 0}} = s_i \cdot \left(\frac{c_{2N+1}}{w_N^2}\right)^{i},
	\end{align*}
	where the numbers $s_i$ satisfy the recursion
	\begin{align}
		s_i = - \sum_{j=0}^{i-1} \binom{2i-1}{2j} s_j \cdot (-1)^{i-j} t_{i-j} \label{eq:RecursionForsi}
	\end{align}
	and $s_0=1$. We solve this recursion in Lemma~\ref{lem:solutionRecursionRelation} in the appendix and obtain $s_i = \begin{cases}
		1 & i=0 \\
		2^{i-1}  & i>0 \\
	\end{cases}$. This gives the statement of the lemma.
\end{proof}

The previous two lemmas give us control over the $x$-derivatives of $w_i$ at $|_{\mathrm{all}\ 0}$. We now turn to the entries of the Jacobian matrix, which require understanding the partial derivatives of $(w_i)_{jx}$ with respect to the variables $p_N, {w}_{1},{p}_{N-1},{w}_{2},$ ${p}_{N-2},\ldots$, ${w}_{N-1},{p}_{1}, w_N$. For an illustrative example with $N=6$ we refer to the Appendix~\ref{sec:SomeMapleCalculations}.

We begin with the following lemma proving that certain entries are zero (these entries are 
circled in the illustrative examples). Intuitively, this lemma asserts that the partial 
derivative of low-order $x$-derivatives of $w_i$ (or $p_{N+1-i}$) with respect to low degree 
variables vanish at $|_{\mathrm{all}\ 0}$. In particular, specializing to $w_N$, this lemma 
implies that a majority of entries (visually: a staircase) in the strictly upper-left triangular part of the Jacobian $J$ \eqref{ex:AntitriangularStructure} vanishes. Additional work (see \cref{lem:lemma9}) has to be done in order to establish vanishing in this entire region.

\begin{lemma} \label{lem:FirstZeros}
	For each $i \in \{1, \ldots, N\}$ we have for each variable $v \in \{w_k, p_{N+1-k}\}$ of degree $k \in \{1, \ldots, N\}$ and each $j \in \{0,1, \ldots, 2N-2\}$:
	\begin{itemize}
		\item If $i>k+\frac{j}{2}$, then $\left(\frac{\partial (w_i)_{jx}}{\partial v}\right)_{\mid \text{all 0}} = 0$.
		\item If $i>k+\frac{j}{2}$, then $\left(\frac{\partial (p_{N+1-i})_{jx}}{\partial v}\right)_{\mid \text{all 0}} = 0$.
	\end{itemize}

\end{lemma}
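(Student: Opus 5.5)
The plan is to introduce a second weight on the ``small'' variables and show that each $x$-derivative lowers this weight by at most one. Treat $w_N$ and $c_{2N+1}$ as coefficients: every $(w_i)_{jx}$ and $(p_{N+1-i})_{jx}$ is a polynomial in $w_1,\dots,w_{N-1},p_1,\dots,p_N$ whose coefficients are Laurent polynomials in $w_N$ (with powers of $c_{2N+1}$). On the small variables define
\begin{equation*}
\omega(w_d) := 2d, \qquad \omega(p_{N+1-d}) := 2d-1 \qquad (1\le d\le N),
\end{equation*}
and extend additively to monomials. This is suggested by the staircase of Lemma~\ref{lem:simpleDerivativesAtAll0}, where $w_d$ first survives after $2d$ derivatives and $p_{N+1-d}$ after $2d-1$. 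The target claim is: every monomial occurring in $(w_i)_{jx}$ has $\omega\ge 2i-j$, and every monomial in $(p_{N+1-i})_{jx}$ has $\omega\ge 2i-1-j$.

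\textbf{Step 1: one derivative costs at most one unit of $\omega$.} Check this on the Hamiltonian equations written out in the proof of Lemma~\ref{lem:simpleDerivativesAtAll0}.
\begin{itemize}
\item For $(w_d)_x=-\sum_{j=0}^{d-1}w_jp_{N+1-d+j}$, every term has $\omega=2d-1$.
\item For $(p_{N+1-d})_x$, the term $-2w_d$ has weight $2d$. The quadratic momenta terms $p_{N+1-a}p_{N+1-b}$ with $a+b=d$ have weight $2d-2$. The term $\partial R/\partial w_{N+1-d}$ is a polynomial in $w_1,\dots,w_{d-1}$ of grading degree $d$, so it has weight exactly $2d$.
\item When $d=1$ there is in addition the $\omega=0$ term $-c_{2N+1}/w_N^2$.
\item Differentiating a coefficient function of $w_N$ brings in $(w_N)_x=-\sum_j w_jp_{1+j}$, which has weight $2N-1\ge 0$.
\end{itemize}
By the product rule, $\omega$ drops by at most $1$ per differentiation. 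Induction on $j$ then gives the claimed lower bounds.

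\textbf{Step 2: deduce the vanishing.} Differentiate with respect to $v\in\{w_k,p_{N+1-k}\}$. Removing one factor $v$ lowers $\omega$ by $\omega(v)\le 2k$.
\begin{itemize}
\item For $(w_i)_{jx}$, every surviving monomial has $\omega\ge 2i-j-2k>0$ when $i>k+\tfrac j2$. It therefore still contains a small variable and vanishes at $|_{\mathrm{all}\ 0}$.
\item For $(p_{N+1-i})_{jx}$ with $v=p_{N+1-k}$, the bound is $2i-j-2k>0$, and the same conclusion holds.
\end{itemize}

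\textbf{The obstacle: the momenta with $v=w_k$.} Here the bound is only $2i-1-j-2k\ge 0$. It is tight exactly when $j$ is odd and $i=k+\tfrac{j+1}{2}$. In that case I must show that $(p_{N+1-i})_{jx}$ contains no monomial of the form $w_k\cdot(\text{function of }w_N)$.

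The first thing I would use is parity. Each of the terms listed in Step 1 changes $\omega$ by an odd amount: $-1$ or $+1$ for the listed terms, and $2N-1$ for the $w_N$-coefficients. Hence $\omega\equiv \omega_0-j \pmod 2$ for every monomial, and $\omega$ equals $\omega_0-j$ only along chains that use a $-1$ step at every differentiation.

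To exclude $w_k$ at this extremal weight, I would track the ``weight-minimal'' part $P_{i,j}$ of each expression. These satisfy a closed recursion involving only the $-1$ steps:
\begin{itemize}
\item $w_d\mapsto -\sum w_jp_{N+1-d+j}$;
\item $p_{N+1-d}\mapsto \tfrac12\sum p\,p$;
\item $p_N\mapsto -c_{2N+1}/w_N^2$.
\end{itemize}
None of these steps produces a $w$-variable from a $p$-variable. A $w$-variable can only appear through a $+1$ step (the term $-2w_d$, or the term $\partial R$). If such a step is used, the total weight exceeds the extremal value by at least $2$, so after removing $w_k$ the monomial still has $\omega\ge 2$ and vanishes at $|_{\mathrm{all}\ 0}$.

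Making this bookkeeping rigorous, as an induction on $j$ for the minimal-weight parts, is the step I expect to require the most care.
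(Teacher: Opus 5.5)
Your weight $\omega$ is a genuinely different route from the paper's. The paper inducts on $i$ with explicit index inequalities and Lemma~\ref{lem:simpleDerivativesAtAll0}; your filtration would subsume that lemma. However, Step~1 is false at $d=N$, so there is a real gap.

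The potential contains $\sum_{a=1}^N w_aw_{N+1-a}$, and hence the term $2w_1w_N$. Therefore $(p_1)_x=-\partial H_1/\partial w_1=-2w_N+\tfrac12\sum_{a+b=N}p_{N+1-a}p_{N+1-b}-\partial R/\partial w_1$. In your scheme $w_N$ is a Laurent coefficient of weight $0$, so your bullet ``$-2w_d$ has weight $2d$'' fails for $d=N$. Setting $\omega(w_N)=2N$ instead is incompatible with the coefficient $c_{2N+1}/w_N^2$. So the step $p_1\mapsto -2w_N$ lowers $\omega$ by $2N-1$, and your target bounds are false for $i=N$:
\begin{itemize}
\item $w_N$ itself at $j=0$ has weight $0$;
\item the term $-2w_N$ in $(p_1)_x$ has weight $0$;
\item the term $2w_N$ in $(w_N)_{2x}$ has weight $0$.
\end{itemize}
The extremal-chain argument breaks for the same reason, because chains starting at $p_1$ can fall below $\omega_0-j$. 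This is not a side case: $i=N$ is exactly the case feeding Proposition~\ref{prop:structure_jacobian}. The paper avoids the problem because it never bounds all monomials. It only tracks $\partial_v(\cdot)|_{\mathrm{all}\ 0}$, so $-2w_N$ is absorbed by the $w$-part of the statement at order $j-1$.

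The repair has two parts.
\begin{enumerate}
\item Reweight $p_1$. It occurs only in $(w_N)_x=-p_1-\sum_{a\ge1}w_ap_{1+a}$, so set $\omega(p_1):=-1$, keeping $w_N$ at weight $0$. Then every step changes the weight by an odd amount $\ge -1$: $w_N\mapsto -p_1$ is $-1$, $p_1\mapsto-2w_N$ is $+1$, the other terms of $(p_1)_x$ are $+(2N-1)$ and $+(2N+1)$, and $w_N\mapsto -w_ap_{1+a}$ is $+(2N-1)$. The $-1$ steps still never create a $w$, and a monomial of positive weight still contains a small variable. So Step~2 goes through verbatim for $i\le N-1$. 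The tight case does too, and parity is not even needed: a chain ending in $w_k\,g(w_N)$ has total change exactly $-j$, so all its steps are $-1$, and such chains starting from $p_{N+1-i}$ never produce a $w$.
\item Treat $i=N$ separately. Put $Q:=(p_1)_x+2w_N$, which is free of $p_1$ and has weight $\ge 2N-2$. Then $(p_1)_{jx}=-2(w_N)_{(j-1)x}+Q_{(j-1)x}$, and for $j\ge2$, $(w_N)_{jx}=2(w_N)_{(j-2)x}-Q_{(j-2)x}-\sum_{a\ge1}(w_ap_{1+a})_{(j-1)x}$. Inducting on $j$, the only monomials of $(w_N)_{jx}$ below weight $2N-j$ are constant multiples of $w_N$ or $p_1$. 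These are killed by $\partial/\partial v$, since $k\le N-1$ is forced by $i\le N$ and $i>k$. The tight case for $(p_1)_{jx}$ then reduces to the extremal part $\tfrac12\sum pp$ of $Q$, which contains no $w$.
\end{enumerate}
Until this is added, your proposal does not prove the lemma in the case that is actually used.
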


\begin{proof} \quad \\
	As before, we proceed by induction on $i$. The base case $i=1$ holds vacuously. 
    
	So let us assume that the statement is true for all indices $i_0<i$ and prove it for $i$. First of all, the case $j=0$ is trivial, as then $i>k$ so $w_i \neq v \neq p_{N+1-i}$. So assume $j\geq 1$. We consider the first derivative of $w_i$:
	\begin{align*}
		(w_i)_{x} &= -\sum_{a=0}^{i-1} w_a p_{N+1-(i-a)}.
	\end{align*}
	Now we differentiate the right-hand side $j-1$ times with respect to $x$ and then differentiate it with respect to $v$. Using the product rule, we obtain:
	\begin{align}
		&\left(\frac{\partial (w_i)_{jx}}{\partial v}\right)_{\mid \text{all 0}} \notag\\ & = - \sum_{a=0}^{i-1} \sum_{b=0}^{j-1} \binom{j-1}{b} \left(\frac{\partial (w_a)_{b\cdot x}}{\partial v} \left(p_{N+1-(i-a)}\right)_{(j-1-b) \cdot x} + (w_a)_{b\cdot x} \frac{\partial \left(p_{N+1-(i-a)}\right)_{(j-1-b) \cdot x}}{\partial v}\right)_{\mid \text{all 0}}. \label{eq:ImportantDerivative}
	\end{align}
	Note that all variables that we differentiate have smaller degree than $i$, so the induction hypothesis applies. Additionally, we can use Lemma~\ref{lem:simpleDerivativesAtAll0} for the terms that are not differentiated with respect to $v$. Let us now consider the terms in each summand individually, starting with the first term $\left(\frac{\partial (w_a)_{b\cdot x}}{\partial v} \left(p_{N+1-(i-a)}\right)_{(j-1-b) \cdot x}\right)_{\mid \text{all 0}}$:
	\begin{itemize}
		\item For $\left(\left(p_{N+1-(i-a)}\right)_{(j-1-b) \cdot x}\right)_{\mid \text{all 0}}$, by Lemma~\ref{lem:simpleDerivativesAtAll0} we know that it vanishes if not $j-1-b \geq 2\cdot (i-a)-1$. We can solve this condition for $a$ and obtain $a \geq i - \frac{j}{2} + \frac{b}{2}$.
		\item For $\left(\frac{\partial (w_a)_{b\cdot x}}{\partial v} \right)_{\mid \text{all 0}}$ not to vanish we must have $a \leq k + \frac{b}{2}$. But $k<i-\frac{j}{2}$, so $a \leq k + \frac{b}{2} < i - \frac{j}{2} + \frac{b}{2}$ has to hold.
	\end{itemize}
	We see that both conditions on the indices cannot be simultaneously satisfied, so in the first term at least one factor always vanishes. So there are no contributions coming from the first term. 
    
	Now we do the same analysis for the second term $\left( (w_a)_{b\cdot x} \frac{\partial \left(p_{N+1-(i-a)}\right)_{(j-1-b) \cdot x}}{\partial v}\right)_{\mid \text{all 0}}$:
	\begin{itemize}
		\item The first factor can be nonzero only if $b \geq 2a$.
		\item The second factor can be nonzero (by induction hypothesis\footnote{Note that for $a=0=b$ we can formally not apply the induction hypothesis, as then we consider $p_{N+1-i}$. In this case, however, we have increased $j$ by 1, so we could do a second induction on $j$ and get the claim from this. We will not repeat this argument for the boundary case.}) only if $i-a \leq k + \frac{j-1-b}{2}$. Solving this for $b$ implies $b \leq 2a +2(\underbrace{k+\frac{j-1}{2}-i}_{<0})$.
	\end{itemize}
	So again, both conditions cannot be simultaneously satisfied and there is no contribution from the second term. We conclude that $\left(\frac{\partial (w_i)_{jx}}{\partial v}\right)_{\mid \text{all 0}} = 0$ as claimed. 
    
	Next, we show that  $\left(\frac{\partial (p_{N+1-i})_{jx}}{\partial v}\right)_{\mid \text{all 0}}$ vanishes. As above, only $j>0$ is relevant. So we consider first
	\begin{align*}
		(p_{N+1-i})_x &= -2w_i + \frac{1}{2} \sum_{a=1}^{i-1} p_{N+1-a} p_{N+1-(i-a)} - \frac{\partial R}{\partial w_{N+1-i}}.
	\end{align*}
    
    First of all, $\left(\frac{\partial (w_i)_{(j-1)x}}{\partial v}\right)_{\mid \text{all 0}} = 0$ (again, a formal proof proceeds by induction on $j$). Now $\frac{\partial R}{\partial w_{N+1-i}}$ is a homogeneous polynomial of degree $i$ containing only $w_1, \ldots, w_{i-1}$, so the induction hypothesis applies. We consider only a monomial $w_{d_1} \cdot \ldots \cdot w_{d_l}$ with $l \geq 2$ and $\sum_{u=1}^{l} d_u = i$. Then
	\begin{align*}
		&\left(\frac{\partial (w_{d_1} \cdot \ldots \cdot w_{d_l})_{(j-1) \cdot x}}{\partial v}\right)_{\mid \text{all 0}} = \\ & = \sum_{b_1+\ldots b_l = j-1} \binom{i}{b_1,\ldots, b_l} \left( \frac{\partial (w_{d_1})_{b_1 \cdot x}}{\partial v} \cdot (w_{d_2})_{b_2\cdot x} \ldots (w_{d_l})_{b_l \cdot x} + \ldots + (w_{d_1})_{b_1\cdot x} \ldots (w_{d_{l-1}})_{b_{l-1} \cdot x} \cdot \frac{\partial (w_{d_l})_{b_l \cdot x}}{\partial v} \right)_{\mid \text{all 0}}.
	\end{align*}
	Next, we consider only $\left( \frac{\partial (w_{d_1})_{b_1 \cdot x}}{\partial v} \cdot (w_{d_2})_{b_2\cdot x} \ldots (w_{d_l})_{b_l \cdot x}\right)_{\mid \text{all 0}}$, the remaining terms follow by the same argument.
	\begin{itemize}
		\item For $r>2$, in order for $(w_{d_r})_{b_r\cdot x}$ not to be zero we must have $b_r \geq 2 d_r$. So in total, \linebreak $j-1-b_1 = \sum_{r=2}^{l} b_r \geq 2 \sum_{r=2}^{l} d_r = 2 (i-d_1)$. So $d_1 \geq i - \frac{j-1}{2} + \frac{b_1}{2}$.
		\item For $ \frac{\partial (w_{d_1})_{b_1 \cdot x}}{\partial v}$ not to vanish, we need $d_1 \leq k + \frac{b_1}{2} < i- \frac{j}{2} + \frac{b_1}{2}$. 
	\end{itemize}
	Both conditions cannot be satisfied, so this term vanishes. Thus, we have proved that
	\begin{align}
		&\left(\frac{\partial (p_{N+1-i})_{jx}}{\partial v}\right)_{\mid \text{all 0}} = \frac{1}{2} \left(\frac{\partial \left(\sum\limits_{a=1}^{i-1} p_{N+1-a} p_{N+1-(i-a)} \right)_{(j-1) \cdot x}}{\partial v}\right)_{\mid \text{all 0}} = \notag \\
		&= \sum\limits_{a=1}^{i-1} \sum_{b=0}^{j-1} \binom{j-1}{b} \left( \frac{\partial \left(p_{N+1-a}\right)_{b \cdot x}}{\partial v} \left(p_{N+1-(i-a)}\right)_{(j-1-b) \cdot x}\right)_{\mid \text{all 0}}. \label{eq:DerivativeP}
	\end{align}
	Note that we used symmetry to only get one term, but it eliminated the factor of $\frac{1}{2}$. We now analyze the indices of the factors to show vanishing of their product:
	\begin{itemize}
		\item For the second factor, we need $j-1-b \geq 2(i-a) -1$, so $a \geq i - \frac{j}{2} + \frac{b}{2} > k + \frac{b}{2}$.
		\item For the first factor, we need $a\leq k + \frac{b}{2}$.
	\end{itemize}
	These two conditions cannot be satisfied simultaneously and so these their products vanish. This 
    proves the induction step and we conclude the claim.
\end{proof}


The purpose of the following lemma is two-fold: it establishes that the remaining strictly 
upper-left triangular entries of $J$ vanish, and shows non-vanishing of some entries on the anti-diagonal.
\Cref{lem:lemma10} will establish non-vanishing of the remaining 
entries on the anti-diagonal.  In \cref{ex:AntitriangularStructure} 
and the appendix we illustrate this for $N = 6$, with these entries 
indicated by a dashed circle.

\begin{lemma}\label{lem:lemma9}
	For each $i \in \{1, \ldots, N\}$ we have for each variable $v \in \{w_k, p_{N+1-k}\}$ of degree $k \in \{1, \ldots, N\}$ and each $j \in \{0,1, \ldots, 2N-2\}$ with $i=k+\frac{j}{2}$:
	\begin{itemize}
		\item For $v= w_k$ we have $\left(\frac{\partial (w_i)_{jx}}{\partial v}\right)_{\mid \text{all 0}} = s_{\frac{j}{2}} \left(\frac{c_{2N+1}}{w_N^2}\right)^{\frac{j}{2}}$, where  $s_l = \begin{cases} 2^{l-1} & \quad l>0 \\ 1, & \quad l=0 \end{cases}$. Note that $\frac{j}{2} = i-k$, so we could also write $i-k$ at any occurrence of $\frac{j}{2}$.
		\item For $v= p_{N+1-k}$ we have $\left(\frac{\partial (w_i)_{jx}}{\partial v}\right)_{\mid \text{all 0}} = 0$.
		\item For $v= w_{k}$ we have $\left(\frac{\partial (p_{N+1-i})_{jx}}{\partial v}\right)_{\mid \text{all 0}} = 0$.
		\item For $v= p_{N+1-k}$ we have $\left(\frac{\partial (p_{N+1-i})_{jx}}{\partial v}\right)_{\mid \text{all 0}} = (-1)^{\frac{j}{2}} \cdot t_{\frac{j}{2}+1} \left(\frac{c_{2N+1}}{w_N^2}\right)^{\frac{j}{2}}$, where $t_l$ are the reduced tangent numbers (see Lemma~\ref{lem:RecursionReducedTangentNumbers}).
	\end{itemize}
\end{lemma}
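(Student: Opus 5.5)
The plan is a strong induction on $i$ that mirrors the proofs of \cref{lem:SimpleDerivativesNonzeroValues} and \cref{lem:FirstZeros}, now on the ``critical line'' $i=k+\frac{j}{2}$. In particular $j=2(i-k)$ is even. The base case is $j=0$, i.e.\ $i=k$. Here $\partial w_k/\partial w_k=1=s_0$, $\partial p_{N+1-k}/\partial p_{N+1-k}=1=t_1$, and the mixed derivatives vanish. For the inductive step I will reuse the product-rule expansions \eqref{eq:ImportantDerivative} and \eqref{eq:DerivativeP}. The index bookkeeping of \cref{lem:FirstZeros} changes in one respect: the strict inequality $k<i-\frac{j}{2}$ becomes the equality $k=i-\frac{j}{2}$. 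So instead of ``no term survives'' I expect ``exactly the boundary terms survive''. Their values are then supplied by \cref{lem:simpleDerivativesAtAll0}, \cref{lem:SimpleDerivativesNonzeroValues} and the induction hypothesis.

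\textbf{Derivatives of $w_i$.} In \eqref{eq:ImportantDerivative}, first consider the term carrying $\partial(w_a)_{bx}/\partial v$. The factor $(p_{N+1-(i-a)})_{(j-1-b)x}$ survives only if $a\ge k+\frac{b}{2}$, while \cref{lem:FirstZeros} forces $a\le k+\frac b2$. Hence $b=2(a-k)$, and both factors lie on their critical lines. The second factor equals $(-1)^{i-a}t_{i-a}\bigl(\frac{c_{2N+1}}{w_N^2}\bigr)^{i-a}$ by \cref{lem:SimpleDerivativesNonzeroValues}. The term carrying $\partial(p_{N+1-(i-a)})_{(j-1-b)x}/\partial v$ forces $2a\le b\le 2a-1$, so it vanishes. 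The only exception is $a=b=0$, where the factor $\partial(p_{N+1-i})_{(j-1)x}/\partial v$ lies strictly off the critical line (odd order $j-1$) and vanishes by \cref{lem:FirstZeros}. For $v=p_{N+1-k}$ the chain starts at $\partial w_k/\partial p_{N+1-k}=0$, so everything vanishes inductively. For $v=w_k$, writing $l=i-k$ and $m=a-k$, I obtain exactly the recursion \eqref{eq:RecursionForsi},
$$S_l=-\sum_{m=0}^{l-1}\binom{2l-1}{2m}S_m(-1)^{l-m}t_{l-m},\qquad S_0=1,$$
whose solution $S_l=s_l$ is \cref{lem:solutionRecursionRelation}.

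\textbf{Derivatives of $p_{N+1-i}$.} Differentiate $(p_{N+1-i})_x=-2w_i+\frac12\sum p\,p-\partial R/\partial w_{N+1-i}$ a further $j-1$ times and then with respect to $v$. The $-2w_i$ term gives order $j-1<2(i-k)$, hence zero by \cref{lem:FirstZeros}. For the $R$ term, the same monomial analysis as in \cref{lem:FirstZeros} yields $d_1\ge k+\frac12+\frac{b_1}{2}>k+\frac{b_1}{2}$, so it vanishes; the extra $\frac12$ comes from the odd order $j-1$. In the quadratic $p$ term, the constraints again pin $a=k+\frac b2$ with $b$ even. This gives zero for $v=w_k$, since the chain starts at $\partial p/\partial w=0$. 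For $v=p_{N+1-k}$, setting $T_l:=\bigl(\partial(p_{N+1-i})_{2l\,x}/\partial p_{N+1-k}\bigr)|_{\mathrm{all}\ 0}\big/\bigl(\frac{c_{2N+1}}{w_N^2}\bigr)^{l}$ with $l=i-k$, I obtain
$$T_l=\sum_{m=0}^{l-1}\binom{2l-1}{2m}T_m(-1)^{l-m}t_{l-m},\qquad T_0=1.$$

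The main obstacle is the last identity, namely that this recursion is solved by $T_l=(-1)^l t_{l+1}$. I would first try to match it with the defining recursion of \cref{lem:SimpleDerivativesNonzeroValues}, $t_{l+1}=\frac12\sum_{j=1}^{l}\binom{2l}{2j-1}t_jt_{l+1-j}$, using exponential generating functions. With $\tau(x)=\sum_i t_i\frac{x^{2i-1}}{(2i-1)!}$, which should be a rescaled $\tan$, the $p$-recursion says $\tau'=1+\frac12\tau^2$ up to normalisation. The new recursion should be the Cauchy-product form of differentiating this ODE once, i.e.\ $\tau''=\tau\tau'$. The parity signs are absorbed by evaluating at $ix$. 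I would carry this out in the appendix alongside \cref{lem:RecursionReducedTangentNumbers}. If the generating-function matching fails, the fallback is to verify the identity directly by induction on $l$ using both recursions.
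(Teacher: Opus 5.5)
Your proposal is correct and follows the paper's proof essentially step by step. It uses the same strong induction on $i$ and the same expansions \eqref{eq:ImportantDerivative} and \eqref{eq:DerivativeP}. The surviving terms are pinned to $b=2(a-k)$ and evaluated via the induction hypothesis and \cref{lem:SimpleDerivativesNonzeroValues}, and the $s$-recursion is closed by \cref{lem:solutionRecursionRelation}.

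The only difference is the closing identity $t_{l+1}=\sum_{m=0}^{l-1}\binom{2l-1}{2m}t_{m+1}t_{l-m}$. The paper proves it in \cref{lem:AlternativeRecursionReducedTangentNumbers} by pairing binomial coefficients under $l\leftrightarrow b-l$. Your generating-function route also works, and is cleaner: with $\tau(x)=\sum_i t_i\frac{x^{2i-1}}{(2i-1)!}$ the defining recursion is exactly $\tau'=1+\frac12\tau^2$, with no rescaling. Differentiating gives $\tau''=\tau\tau'$, and its coefficient of $\frac{x^{2l-1}}{(2l-1)!}$ is precisely the required identity. The signs $(-1)^m(-1)^{l-m}$ cancel, so no passage to $ix$ is needed.
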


\begin{proof}
	\quad \\
	The proof is by induction on $i$ and similar to the proof of Lemma~\ref{lem:FirstZeros}. Note also that $j$ has to be even, as otherwise $i=k+\frac{j}{2}$ is impossible. The case $j=0$ is always trivial, as then $i=k$ and we have no $x$-derivatives. This is also the only possible case for $i=1$, so the base case is proven. \\
	Consider for the inductive step some index $i>1$ and assume that the statement is proven for all $i_0<i$. Assume $j>0$, so we have at least one derivative. First, we focus on $w_i$. Then we can use \eqref{eq:ImportantDerivative}, which we repeat for convenience:
	\begin{align*}
		&\left(\frac{\partial (w_i)_{jx}}{\partial v}\right)_{\mid \text{all 0}} \notag\\ & = - \sum_{a=0}^{i-1} \sum_{b=0}^{j-1} \binom{j-1}{b} \left(\frac{\partial (w_a)_{b\cdot x}}{\partial v} \left(p_{N+1-(i-a)}\right)_{(j-1-b) \cdot x} + (w_a)_{b\cdot x} \frac{\partial \left(p_{N+1-(i-a)}\right)_{(j-1-b) \cdot x}}{\partial v}\right)_{\mid \text{all 0}}.
	\end{align*}
	Now we consider the $v=p_{N+1-k}$ case first. Consider the first term $\left(\frac{\partial (w_a)_{b\cdot x}}{\partial v} \left(p_{N+1-(i-a)}\right)_{(j-1-b) \cdot x}\right)_{\mid \text{all 0}}$:
	\begin{itemize}
		\item By the induction hypothesis, $\left(\frac{\partial (w_a)_{b\cdot x}}{\partial v} \right)_{\mid \text{all 0}}$ can only be nonzero if $a<k+\frac{b}{2}$ (the case of equality is not possible).
		\item For $\left(\left(p_{N+1-(i-a)}\right)_{(j-1-b) \cdot x}\right)_{\mid \text{all 0}}$ to be nonzero, we need $j-1-b \geq 2(i-a)-1$, or equivalently $a \geq i-\frac{j}{2} + \frac{b}{2} = k+\frac{b}{2}$. 
	\end{itemize}
	We see that both conditions cannot be satisfied simultaneously, so the first term vanishes. 
    
	For the second term $\left((w_a)_{b\cdot x} \frac{\partial \left(p_{N+1-(i-a)}\right)_{(j-1-b) \cdot x}}{\partial v}\right)_{\mid \text{all 0}}$ we obtain:
	\begin{itemize}
		\item For $\left((w_a)_{b\cdot x}\right)_{\mid \text{all 0}}$ we need $a \leq \frac{b}{2}$.
		\item For $\left(\frac{\partial \left(p_{N+1-(i-a)}\right)_{(j-1-b) \cdot x}}{\partial v}\right)_{\mid \text{all 0}}$ we need (note that now we differentiate with respect to a momentum, so equality would yield a non-zero value) $i-a \leq k + \frac{j-1-b}{2}$ or $a\geq \frac{b+1}{2}$ (using $i=k+\frac{j}{2}$). 
	\end{itemize}
	So again, all products vanish and hence do not contribute. In total, there is no contribution and so $\left(\frac{\partial (w_i)_{jx}}{\partial p_{N+1-k}}\right)_{\mid \text{all 0}} = 0$ as claimed. 
    
	Next, we consider the case $v=w_k$. The analysis is similar as above, but at one place we switch from a strict inequality to an equality that allows non-zero terms. In detail, consider first the second summand $\left((w_a)_{b\cdot x} \frac{\partial \left(p_{N+1-(i-a)}\right)_{(j-1-b) \cdot x}}{\partial v}\right)_{\mid \text{all 0}}$
	\begin{itemize}
		\item For $\left((w_a)_{b\cdot x}\right)_{\mid \text{all 0}}$ we need $a \leq \frac{b}{2}$.
		\item For $\left(\frac{\partial \left(p_{N+1-(i-a)}\right)_{(j-1-b) \cdot x}}{\partial v}\right)_{\mid \text{all 0}}$ we need  $i-a < k + \frac{j-1-b}{2}$ or $a> \frac{b+1}{2}$ (using $i=k+\frac{j}{2}$). 
	\end{itemize}
	So the second summands vanish. 
    
	But for $\left(\frac{\partial (w_a)_{b\cdot x}}{\partial v} \left(p_{N+1-(i-a)}\right)_{(j-1-b) \cdot x}\right)_{\mid \text{all 0}}$ we obtain:
	\begin{itemize}
		\item By the induction hypothesis, $\left(\frac{\partial (w_a)_{b\cdot x}}{\partial v} \right)_{\mid \text{all 0}}$ can only be nonzero if $a\leq k+\frac{b}{2}$ (the case of equality is now possible).
		\item For $\left(\left(p_{N+1-(i-a)}\right)_{(j-1-b) \cdot x}\right)_{\mid \text{all 0}}$ to be nonzero, we need $j-1-b \geq 2(i-a)-1$, or equivalently $a \geq i-\frac{j}{2} + \frac{b}{2} = k+\frac{b}{2}$. 
	\end{itemize}
	So now there is a contribution for $a=k+\frac{b}{2}$ or $b=2(a-k)$. We simplify the expression:
	\begin{align*}
		\left(\frac{\partial (w_i)_{jx}}{\partial v}\right)_{\mid \text{all 0}} &= - \sum_{a=0}^{i-1} \binom{j-1}{2(a-k)} \left(\frac{\partial (w_a)_{(2(a-k))\cdot x}}{\partial v} \left(p_{N+1-(i-a)}\right)_{(j-1-2(a-k)) \cdot x}\right)_{\mid \text{all 0}} = \\
		& \stackrel{\text{IH}}{=} - \sum_{a=0}^{i-1} \binom{j-1}{2(a-k)} s_{a-k} \left(\frac{c_{2N+1}}{w_N^2}\right)^{a-k}  (-1)^{i-a} t_{i-a} \left(\frac{c_{2N+1}}{w_N^2}\right)^{i-a} = \\
		& = \left(- \sum_{l=0}^{i-k-1} \binom{2(i-k)-1}{2 l} s_{l}   (-1)^{i-k-l} t_{i-k-l}\right) \cdot \left(\frac{c_{2N+1}}{w_N^2}\right)^{i-k}.
	\end{align*}
	Here we have used the relation $j = 2 (i-k)$ to eliminate $j$ from the expression. It remains to argue that the first factor equals $s_{i-k}$. In general, we need the recursion relation
	\begin{align*}
		- \sum_{l=0}^{b-1} \binom{2b-1}{2 l} s_{l}   (-1)^{b-l} t_{b-l} = s_b
	\end{align*}
	for any $b>0$, and this is derived in Lemma~\ref{lem:solutionRecursionRelation}, see also \eqref{eq:RecursionForsi}. So the proof for $w_i$ is completed. \\
	Next, we consider $p_{N+1-i}$. We have again
	\begin{align*}
		(p_{N+1-i})_x &= -2w_i + \frac{1}{2} \sum_{a=1}^{i-1} p_{N+1-a} p_{N+1-(i-a)} - \frac{\partial R}{\partial w_{N+1-i}}.
	\end{align*}
	The first term $-2w_i$ will not yield a contribution by Lemma~\ref{lem:FirstZeros}. We have to argue that this is also the case for $\frac{\partial R}{\partial w_{N+1-i}}$. For concreteness, we sketch the argument for a term $w_{\alpha} \cdot w_{i-\alpha}$ with $1\leq \alpha <i$, the general argument can be derived from this (similar as in Lemma~\ref{lem:FirstZeros}):
	\begin{align*}
		\left(\frac{\partial \left(w_{\alpha} \cdot w_{i-\alpha}\right)_{(j-1)\cdot x}}{\partial v}\right)_{\mid \text{all 0}} &= \sum_{b=0}^{j-1} \binom{j-1}{b} \left(\frac{\partial (w_{\alpha})_{b\cdot x}}{\partial v} (w_{i-\alpha})_{(j-1-b)\cdot x}+  (w_{\alpha})_{b\cdot x} \cdot \frac{\partial (w_{i-\alpha})_{(j-1-b)\cdot x}}{\partial v}\right)_{\mid \text{all 0}}.
	\end{align*}
	We focus on the first term, the second term being similar.
	\begin{itemize}
		\item For $\left(\frac{\partial (w_{\alpha})_{b\cdot x}}{\partial v} \right)_{\mid \text{all 0}}$ to be nonzero, we need (according to Lemma~\ref{lem:FirstZeros}, i.e. we do not distinguish cases on the variable $v$) $\alpha \leq k + \frac{b}{2}$.
		\item For $\left( (w_{i-\alpha})_{(j-1-b)\cdot x}\right)_{\mid \text{all 0}}$ to be nonzero, we need $i-\alpha \leq \frac{(j-1-b)}{2}$, or $\alpha \geq i-\frac{j-1}{2} + \frac{b}{2} = k +\frac{1}{2} + \frac{b}{2}$.
	\end{itemize}
	Both cannot be simultaneously satisfied, so these terms do not contribute. 
    
	So finally, we only have to investigate (compare \eqref{eq:DerivativeP})
	\begin{align*}
		&\left(\frac{\partial (p_{N+1-i})_{jx}}{\partial v}\right)_{\mid \text{all 0}} = \frac{1}{2} \left(\frac{\partial \left(\sum\limits_{a=1}^{i-1} p_{N+1-a} p_{N+1-(i-a)} \right)_{(j-1) \cdot x}}{\partial v}\right)_{\mid \text{all 0}} =  \\
		&= \sum\limits_{a=1}^{i-1} \sum_{b=0}^{j-1} \binom{j-1}{b} \left( \frac{\partial \left(p_{N+1-a}\right)_{b \cdot x}}{\partial v} \left(p_{N+1-(i-a)}\right)_{(j-1-b) \cdot x}\right)_{\mid \text{all 0}}.
	\end{align*}
	Consider again the case $v=w_k$ first, as this should yield zero.
	\begin{itemize}
		\item For $\left( \frac{\partial \left(p_{N+1-a}\right)_{b \cdot x}}{\partial v}\right)_{\mid \text{all 0}}$ not to vanish we need $a<k+\frac{b}{2}$.
		\item For $\left( \left(p_{N+1-(i-a)}\right)_{(j-1-b) \cdot x}\right)_{\mid \text{all 0}}$ not to vanish we need $j-1-b \geq 2(i-a)-1$ or $a \leq i-\frac{j}{2} + \frac{b}{2} = k + \frac{b}{2}$. 
	\end{itemize}
	Both conditions cannot be satisfied simultaneously, so the case $v=w_k$ is proven. 
    
	For $v=p_{N+1-k}$, however, the analysis is as follows:
	\begin{itemize}
		\item For $\left( \frac{\partial \left(p_{N+1-a}\right)_{b \cdot x}}{\partial v}\right)_{\mid \text{all 0}}$ not to vanish we need $a\leq k+\frac{b}{2}$.
		\item For $\left( \left(p_{N+1-(i-a)}\right)_{(j-1-b) \cdot x}\right)_{\mid \text{all 0}}$ not to vanish we need $j-1-b \geq 2(i-a)-1$ or $a \leq i-\frac{j}{2} + \frac{b}{2} = k + \frac{b}{2}$. 
	\end{itemize}
	So we get a contribution for $b=2(a-k)$. Recall that $j= 2(i-k)$ and plug this into the formula:
	\begin{align*}
		\left(\frac{\partial (p_{N+1-i})_{jx}}{\partial v}\right)_{\mid \text{all 0}} &= \sum\limits_{a=1}^{i-1} \sum_{b=0}^{j-1} \binom{j-1}{b} \left( \frac{\partial \left(p_{N+1-a}\right)_{b \cdot x}}{\partial v} \left(p_{N+1-(i-a)}\right)_{(j-1-b) \cdot x}\right)_{\mid \text{all 0}} = \\
		&= \sum\limits_{a=k}^{i-1} \binom{2(i-k)-1}{2(a-k)} \left( \frac{\partial \left(p_{N+1-a}\right)_{2(a-k) \cdot x}}{\partial v} \left(p_{N+1-(i-a)}\right)_{(2(i-a)-1) \cdot x}\right)_{\mid \text{all 0}} = \\
		&= \sum\limits_{l=0}^{i-k-1} \binom{2(i-k)-1}{2l} 
		\left( \frac{\partial \left(p_{N+1-(k+l)}\right)_{2l \cdot x}}{\partial v} \left(p_{N+1-(i-k-l)}\right)_{(2(i-k-l)-1) \cdot x}\right)_{\mid \text{all 0}} = \\
		&= \sum\limits_{l=0}^{i-k-1} \binom{2(i-k)-1}{2l} (-1)^{l} t_{l+1} \left(\frac{c_{2N+1}}{w_N^2}\right)^{l} (-1)^{i-k-l} t_{i-k-l} \left(\frac{c_{2N+1}}{w_N^2}\right)^{i-k-l} = \\
		&= (-1)^{i-k} \left(\frac{c_{2N+1}}{w_N^2}\right)^{i-k} \quad \sum\limits_{l=0}^{i-k-1} \binom{2(i-k)-1}{2l} t_{l+1} t_{i-k-l}.
	\end{align*}
    The result now follows from the recursion
    \begin{align*}
    t_{b+1} = \sum_{l=0}^{b-1} \binom{2b-1}{2l} t_{l+1} t_{b-l} = \sum_{l=1}^{b} \binom{2b-1}{2(l-1)} t_l t_{b+1-l},
    \end{align*}
    for any $b \in \mathbb{N}$, which is established in 
    \cref{lem:AlternativeRecursionReducedTangentNumbers} in the appendix. This completes the proof.
\end{proof}

The last step for proving our main theorem on the Camassa--Holm 
equation is the following lemma (considering the other anti-diagonal 
entries for $j$ odd). In \cref{ex:AntitriangularStructure} and the 
appendix we illustrate this for $N = 6$, with these entries indicated 
by a dashed box around them.

\begin{lemma}\label{lem:lemma10} \quad \\
	For each $i \in \{1, \ldots, N\}$ we have for each $k \in \{1, \ldots, N\}$ and each $j \in \{0,1, \ldots, 2N-2\}$ with $i=k+\frac{j-1}{2}$:
	\begin{align*}
		\left(\frac{\partial (w_i)_{jx}}{\partial p_{N+1-k}}\right)_{\mid \text{all 0}} = - 2^{i-k} \left(\frac{c_{2N+1}}{w_N^2}\right)^{i-k}.
	\end{align*}
\end{lemma}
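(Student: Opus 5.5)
Here $j=2(i-k)+1$ is odd, and the proof will be an induction on $i$ following the template of Lemma~\ref{lem:lemma9}. In the base case $i=k$ we have $j=1$, and from $(w_k)_x=-\sum_{a=0}^{k-1} w_a p_{N+1-k+a}$ the partial derivative with respect to $p_{N+1-k}$ at $|_{\mathrm{all}\ 0}$ is $-w_0=-1=-2^0$, as claimed. This also covers $i=1$.

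For the inductive step with $i>k$, I start again from \eqref{eq:ImportantDerivative} with $v=p_{N+1-k}$ and $j-1=2(i-k)$. There are two kinds of summands, and for each I apply the index bookkeeping from Lemma~\ref{lem:simpleDerivativesAtAll0}, Lemma~\ref{lem:FirstZeros} and Lemma~\ref{lem:lemma9}.

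\emph{First kind:} $\partial_v (w_a)_{bx}\cdot (p_{N+1-(i-a)})_{(j-1-b)x}$. The second factor needs $j-1-b\ge 2(i-a)-1$, that is, $a\ge k+\frac{b-1}{2}$. The first factor needs $a\le k+\frac{b-1}{2}$: this follows from Lemma~\ref{lem:FirstZeros} together with the second item of Lemma~\ref{lem:lemma9}, which kills the equality case $a=k+\frac b2$. So only $b=2(a-k)+1$ survives. The first factor is then given by the induction hypothesis as $-2^{a-k}X^{a-k}$, where $X=\frac{c_{2N+1}}{w_N^2}$. The second factor is $(p_{N+1-(i-a)})_{(2(i-a)-1)x}=(-1)^{i-a}t_{i-a}X^{i-a}$ by Lemma~\ref{lem:SimpleDerivativesNonzeroValues}.

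\emph{Second kind:} $(w_a)_{bx}\cdot\partial_v(p_{N+1-(i-a)})_{(j-1-b)x}$. The first factor needs $b\ge 2a$. The second needs $i-a\le k+\frac{j-1-b}{2}$, that is, $b\le 2a$, with equality allowed by the fourth item of Lemma~\ref{lem:lemma9}. So $b=2a$ survives. The first factor is then $s_aX^a$ (Lemma~\ref{lem:SimpleDerivativesNonzeroValues}, with $s_0=1$), and the second is $(-1)^{i-a-k}t_{i-a-k+1}X^{i-a-k}$.

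Both kinds produce the common factor $X^{i-k}$. Writing $b:=i-k$, the claim reduces to the combinatorial identity
\begin{equation*}
-2^{b}=-\Bigl[\sum_{l=0}^{b-1}\binom{2b}{2l+1}(-2^{l})(-1)^{b-l}t_{b-l}+\sum_{a}\binom{2b}{2a}s_a(-1)^{b-a}t_{b-a+1}\Bigr].
\end{equation*}
In the second sum, $a$ runs over $0\le a\le i-1$ subject to $b-a\ge 0$, so the term $a=b$ is included; it contributes $s_bt_1=2^{b-1}$.

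I expect this identity to be the main obstacle. My plan is to pass to exponential generating functions, where the reduced tangent numbers are encoded by $\tan$ or $\tanh$. The sequence $s_l$ (equal to $2^{l-1}$ for $l>0$ and $1$ for $l=0$) has an even EGF of the form $\frac12(1+\cosh(\sqrt2\,x))$ after rescaling, and the recursion \eqref{eq:RecursionForsi} already encodes a relation of the shape $S'=-S\cdot T$. Both sums above are then coefficients of products such as $S\cdot T'$ and $(\text{odd series})\cdot T$. The identity should follow from differentiating the relation behind Lemma~\ref{lem:solutionRecursionRelation} once, combined with Lemma~\ref{lem:AlternativeRecursionReducedTangentNumbers}. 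A short alternative route is to note that the odd-$j$ entries arise by differentiating the even-$j$ formula of Lemma~\ref{lem:lemma9} once more in $x$, which suggests the identity is just the derivative of the one already established there. Before completing the derivation, I would check small values of $b$ (for instance $b=1$, which gives $-2$) against the $N=6$ data in Example~\ref{ex:AntitriangularStructure}.
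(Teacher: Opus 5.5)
Your base case, the split of \eqref{eq:ImportantDerivative} into two kinds of terms, the surviving indices $b=2(a-k)+1$ and $b=2a$, and the values you plug in match the paper's proof exactly. Those values come from the induction hypothesis, Lemma~\ref{lem:SimpleDerivativesNonzeroValues} and the fourth item of Lemma~\ref{lem:lemma9}. Your reduced identity is therefore correct.

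The gap is that you never prove that identity, and it is the only step that actually yields the value $-2^{i-k}$; you leave it as a plan. Your ``short alternative route'' does not work. The formulas in Lemma~\ref{lem:lemma9} are values at the point $|_{\mathrm{all}\ 0}$, and neither that evaluation nor $\partial_v$ commutes with $D_x$: one has $\partial_v\{F,H_1\}=\{\partial_v F,H_1\}+\{F,\partial_v H_1\}$. So the odd-$j$ entries cannot be obtained by differentiating the even-$j$ closed form in $x$.

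The identity closes in two lines, and this is what the paper does; you do not need Lemma~\ref{lem:AlternativeRecursionReducedTangentNumbers}. Write $2^l=s_{l+1}$ and substitute $a=l+1$ in your first sum. Its summand becomes $\binom{2b}{2a-1}s_a(-1)^{b-a}t_{b-a+1}$, which has the same shape as the second sum. Pascal's rule $\binom{2b}{2a-1}+\binom{2b}{2a}=\binom{2b+1}{2a}$ then merges the bracket into
\[
\sum_{a=0}^{b}\binom{2b+1}{2a}s_a(-1)^{b-a}t_{b+1-a}
=-\sum_{a=0}^{b}\binom{2(b+1)-1}{2a}s_a(-1)^{(b+1)-a}t_{(b+1)-a}
=s_{b+1}=2^{b},
\]
where the middle equality uses $(-1)^{b-a}=-(-1)^{b+1-a}$ and the next one is \eqref{eq:RecursionForsi} with $i=b+1$. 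This is exactly the identity you need.

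Your generating-function plan would also succeed. Differentiating $f'=2f\tanh$ from Lemma~\ref{lem:solutionRecursionRelation} once and comparing coefficients of $x^{2b}$ is the Leibniz-rule form of the same Pascal step. As written, however, the proposal stops short of completing it.
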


\begin{proof}
	\quad \\
	The proof is via induction on $i$. We directly start with the inductive step (the base case can be treated the same way). Again, we use \eqref{eq:ImportantDerivative} from Lemma~\ref{lem:FirstZeros} and write $v$ for $p_{N+1-k}$ for brevity:
	\begin{align*}
		&\left(\frac{\partial (w_i)_{jx}}{\partial v}\right)_{\mid \text{all 0}} \\ & = - \sum_{a=0}^{i-1} \sum_{b=0}^{j-1} \binom{j-1}{b} \left(\frac{\partial (w_a)_{b\cdot x}}{\partial v} \left(p_{N+1-(i-a)}\right)_{(j-1-b) \cdot x} + (w_a)_{b\cdot x} \frac{\partial \left(p_{N+1-(i-a)}\right)_{(j-1-b) \cdot x}}{\partial v}\right)_{\mid \text{all 0}}.
	\end{align*}
	Now we consider again the summands, starting with $\left(\frac{\partial (w_a)_{b\cdot x}}{\partial v} \left(p_{N+1-(i-a)}\right)_{(j-1-b) \cdot x}\right)_{\mid \text{all 0}}$:
	\begin{itemize}
		\item For the first factor $\left(\frac{\partial (w_a)_{b\cdot x}}{\partial v}\right)_{\mid \text{all 0}}$ not to vanish we need $a<k+ \frac{b}{2}$.
		\item For the second factor $\left(\left(p_{N+1-(i-a)}\right)_{(j-1-b) \cdot x}\right)_{\mid \text{all 0}}$ we need $j-1-b \geq 2(i-a)-1$, or equivalently $a \geq i-\frac{j}{2} + \frac{b}{2} = k - \frac{1}{2} + \frac{b}{2}$.
	\end{itemize}
	This is possible only for $b=2(a-k)+1$. So from the first terms in the summands we get the contribution
	\begin{align*}
		&- \sum_{a=0}^{i-1} \sum_{b=0}^{j-1} \binom{j-1}{b} \left(\frac{\partial (w_a)_{b\cdot x}}{\partial v} \left(p_{N+1-(i-a)}\right)_{(j-1-b) \cdot x}\right)_{\mid \text{all 0}} = \\ & = - \sum_{a=k}^{i-1} \binom{j-1}{2(a-k)+1} \left(\frac{\partial (w_a)_{(2(a-k)+1)\cdot x}}{\partial v} \left(p_{N+1-(i-a)}\right)_{(j-2(a-k)-2) \cdot x}\right)_{\mid \text{all 0}}.
	\end{align*}
	Note that $2i=2k+j-1$, so $j-2(a-k)-2 = 2(i-a)-1$. Now we can apply the induction hypothesis and the above lemmas to obtain:
	\begin{align*}
		&= \sum_{a=k}^{i-1} \binom{j-1}{2(a-k)+1} 2^{a-k} \left(\frac{c_{2N+1}}{w_N^2}\right)^{a-k} \cdot (-1)^{i-a} t_{i-a} \left(\frac{c_{2N+1}}{w_N^2}\right)^{i-a} = \\& = 
		(-1)^{i-k} \left(\frac{c_{2N+1}}{w_N^2}\right)^{i-k} \quad \sum_{l=0}^{i-k-1} \binom{2(i-k)}{2l+1} 2^{l}  \cdot (-1)^{l} t_{i-k-l}.
	\end{align*}
	Next, consider the second term $\left((w_a)_{b\cdot x} \frac{\partial \left(p_{N+1-(i-a)}\right)_{(j-1-b) \cdot x}}{\partial v}\right)_{\mid \text{all 0}}$
	\begin{itemize}
		\item For the first factor  $\left((w_a)_{b\cdot x} \right)_{\mid \text{all 0}}$ not to vanish we need $b \geq 2a$.
		\item For the second factor $\left(\frac{\partial \left(p_{N+1-(i-a)}\right)_{(j-1-b) \cdot x}}{\partial v}\right)_{\mid \text{all 0}}$ not to vanish we need $i-a \leq k + \frac{j-1-b}{2}$ or equivalently $b \leq 2a + 2 \left(k+\frac{j-1}{2} -i\right) = 2a$.
	\end{itemize}
	So again there is one contribution for $b=2a$. Thus, the sum of the second terms is
	\begin{align*}
		& - \sum_{a=0}^{i-1} \sum_{b=0}^{j-1} \binom{j-1}{b} \left((w_a)_{b\cdot x} \frac{\partial \left(p_{N+1-(i-a)}\right)_{(j-1-b) \cdot x}}{\partial v}\right)_{\mid \text{all 0}} = \\
		& = - \sum_{a=0}^{i-1} \binom{2(i-k)}{2a} \left((w_a)_{(2a)\cdot x} \frac{\partial \left(p_{N+1-(i-a)}\right)_{(j-1-2a) \cdot x}}{\partial v}\right)_{\mid \text{all 0}} = \\
		&= - \sum_{a=0}^{i-1} \binom{2(i-k)}{2a} s_a \left(\frac{c_{2N+1}}{w_N^2}\right)^{a} (-1)^{i-k-a} t_{i-k-a+1} \left(\frac{c_{2N+1}}{w_N^2}\right)^{i-k-a} = \\
		&= (-1)^{i-k+1} \left(\frac{c_{2N+1}}{w_N^2}\right)^{i-k} \quad  \sum_{a=0}^{i-k} \binom{2(i-k)}{2a} s_a (-1)^{a} t_{i-k-a+1}.
	\end{align*}
	Putting everything together, we obtain:
	\begin{align*}
		\left(\frac{\partial (w_i)_{jx}}{\partial v}\right)_{\mid \text{all 0}} &= (-1)^{i-k} \left(\frac{c_{2N+1}}{w_N^2}\right)^{i-k} \quad \sum_{l=0}^{i-k-1} \binom{2(i-k)}{2l+1} s_{l+1}  \cdot (-1)^{l} t_{i-k-l} + \\ & \quad + (-1)^{i-k+1} \left(\frac{c_{2N+1}}{w_N^2}\right)^{i-k} \quad  \sum_{a=0}^{i-k} \binom{2(i-k)}{2a} s_a (-1)^{a} t_{i-k-a+1} = \\
		& = (-1) \cdot (-1)^{i-k} \left(\frac{c_{2N+1}}{w_N^2}\right)^{i-k} \cdot \\ & \cdot \left(\sum_{a=0}^{i-k} \binom{2(i-k)}{2a} s_a (-1)^{a} t_{i-k-a+1} - \sum_{a=1}^{i-k} \binom{2(i-k)}{2a-1} s_{a}  \cdot (-1)^{a-1} t_{i-k-a+1}\right) = \\
		& = (-1) \cdot (-1)^{i-k} \left(\frac{c_{2N+1}}{w_N^2}\right)^{i-k} \cdot \left(\sum_{a=0}^{i-k} \left(\binom{2(i-k)}{2a}+ \binom{2(i-k)}{2a-1}\right) s_a (-1)^{a} t_{i-k-a+1} \right) = \\
		&=  (-1) \cdot (-1)^{i-k} \left(\frac{c_{2N+1}}{w_N^2}\right)^{i-k} \cdot \left(\sum_{a=0}^{i-k} \binom{2(i-k)+1}{2a} s_a (-1)^{a} t_{i-k-a+1} \right).
	\end{align*}
	So we need to prove that $\sum_{a=0}^{i-k} \binom{2(i-k)+1}{2a} s_a (-1)^{a} t_{i-k-a+1} = (-1)^{i-k} s_{i-k+1}$, then we are done. But this is the same as \eqref{eq:RecursionForsi}. Set $b=i-k+1$, then
	\begin{align*}
		\sum_{a=0}^{i-k} \binom{2(i-k)+1}{2a} s_a (-1)^{a} t_{i-k-a+1} &= (-1)^{i-k+1} \cdot \sum_{a=0}^{i-k} \binom{2(i-k)+1}{2a} s_a (-1)^{i-k-a+1} t_{i-k-a+1} = \\
		&= (-1)^{i-k+1} \cdot \sum_{a=0}^{b-1} \binom{2b-1}{2a} s_a (-1)^{b-a} t_{b-a} = (-1)^{i-k+1} (-s_b) = \\ & = (-1)^{i-k} s_{i-k+1}.
	\end{align*}
	So everything is proven.
\end{proof}

To summarize, \cref{prop:structure_jacobian} follows from lemmas~\ref{lem:FirstZeros}--
\ref{lem:lemma10} by specializing to the case $w_N$, and \cref{prop:structure_jacobian} implies 
the theorem for the special case $m(\mu) = \mu$. In the next section we extend this result to all 
polynomials $m(\mu) = m_1 \mu + m_0$, so that the theorem is proven in full generality.

\subsection{Generalization to the case of arbitrary polynomial for $n=1$} \label{subsec:GeneralizationArbitraryPolynomial}

We now reduce the jet-approximation problem for a general degree $1$ polynomial $m(\mu) = m_1 \mu 
+ m_0$ to the Camassa--Holm case $m(\mu) = \mu$. In \cite{konyaev2025lax}, the authors mention a 
gauge equivalence on the class of BKM systems which takes the form 
\begin{equation}
    \lambda \mapsto \lambda + \lambda_0, \qquad m(\mu) \mapsto m(\mu - \lambda_0).
\end{equation}
Clearly, the polynomial $m(\lambda)$ is preserved. In view of \eqref{eq:BKM_typeI}, it transforms the BKM 
system to one of the same type (using that adding a multiple of the 
identity to a Nijenhuis operator is still Nijenhuis). Using this gauge 
transformation, we can eliminate the constant term from $m(\mu) = m_1 
\mu + m_0$. Thus, we may assume that $m(\mu) = m_1 \mu$ with $m_1 \neq 
0$.

We show first that the theorem holds for all $m_1 > 0$. To this end, 
consider the following transformation parametrized by a constant $a \neq 0$:
\begin{equation}
    x \mapsto a\,x,\quad t \mapsto a\,t,\quad m(\mu) \mapsto a^2\, m(\mu).
\end{equation}
In particular, this action induces $\zeta \mapsto a^2 \zeta$ and we see that this action 
preserves the class of BKM systems of a given type \eqref{eq:BKM_typeI}. By using this $\mathbb{R}_{\times}$-action we can rescale $m_1$ to be either $\pm 1$ (over $\mathbb{C}$ to 1). The claim 
is true for $m_1 = 1$ and so we obtain the theorem for all $m_1 > 0$. Now, our jet-approximation approach depends algebraically on the data. Therefore, the result must be true for a Zariski-open set containing $\{m_1 : m_1 > 0 \} $, which implies that it is true for all $m_1 \neq 0$. This concludes the proof of \cref{thm:ApproximationCasemlinear}.

The above approach relied on transformations of the configuration space of the BKM PDE. An 
alternative approach is to instead consider a transformation on the configuration space $
\mathbb{R}^N(w)$ of the Stäckel system, which we briefly sketch here. \\
When working in the coordinates $w$, the constant term of $c(\mu)$ gives a contribution of $\left(-{\sum_{i=0}^{N} 
w_{N-i} \lambda^{i}}\right)^{-1}$. This is no longer a polynomial of homogeneous degree in $w_1, 
\ldots, w_N$, so we cannot rely on techniques used in the previous sections. Define new variables $v_1, \ldots, v_N$ through the relation
\begin{align}
	\mu^{N} + \sum_{i=1}^{N} w_i \mu^{N-i} \stackrel{!}{=} (\mu-\lambda)^{N} + \sum_{i=1}^{N} v_i (\mu-\lambda)^{N-i}. \label{eq:CoordinateChange}
\end{align}
Note that the change of variables from $w$ to $v$ and vice versa is given by an affine map, and 
its inverse can be readily obtained. One can obtain the Nijenhuis operator $M$ and inverse flat metric $g^{-1}$ in terms of $v$-coordinates, as follows.

\begin{lemma}[\textbf{$M, g^{-1}$ in new coordinates}] \quad \\
	The matrix $M_v$ representing the Nijenhuis operator $M$ \eqref{eq:M_and_g} in terms of $v$-coordinates is given by
	\begin{align*}
		M_v &= \begin{pmatrix}
			-v_1 & 1 & 0 & \ldots & 0 \\
			-v_2 & 0 & 1 & \ddots & \vdots \\
			\vdots & & \ddots & \ddots & \\
			-v_{N-1} & & & 0 & 1 \\
			-v_{N} & 0 & & & 0
		\end{pmatrix} + \lambda \cdot \operatorname{Id} = \begin{pmatrix}
			-v_1+\lambda & 1 & 0 & \ldots & 0 \\
			-v_2 & \lambda & 1 & \ddots & \vdots \\
			\vdots & & \ddots & \ddots & \\
			-v_{N-1} & & & \lambda & 1 \\
			-v_{N} & 0 & & & \lambda
		\end{pmatrix}.
	\end{align*}
    The matrix $g_v^{-1}$ representing the inverse metric $g^{-1}$ \eqref{eq:M_and_g} in terms of $v$-coordinates is given by
	\begin{align*}
		g_v^{-1} &= \begin{pmatrix}
			0 & \ldots & 0 & 0  & 1 \\
			0 & \ldots & 0 & 1  & v_1 \\
			\vdots & \iddots  & \iddots & \iddots  & v_2 \\
			0 & 1 &  v_1 & \iddots &  \vdots \\
			1 & v_1 & v_2 & \ldots & v_{N-1}
		\end{pmatrix}.
	\end{align*}
\end{lemma}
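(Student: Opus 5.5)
The proof is a direct computation in coordinates. The relation \eqref{eq:CoordinateChange} says that $\sigma_w(\mu)=\det(\mu\,\mathrm{Id}-M)$ equals $\tilde\sigma_v(\mu-\lambda)$, where $\tilde\sigma_v(\nu)=\nu^N+\sum_i v_i\nu^{N-i}$. Hence the operator $\tilde M := M-\lambda\,\mathrm{Id}$ has characteristic polynomial $\tilde\sigma_v$, with $v_i$ its coefficients.

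To get $M_v$, I would use that the companion form of a differentially nondegenerate Nijenhuis operator is intrinsic. In coordinates given by the coefficients of its characteristic polynomial, a gl-regular Nijenhuis operator with functionally independent coefficients takes exactly the companion form; this is the normal form already used for $L$ and $M$. Since $\tilde M$ is Nijenhuis (adding a constant multiple of the identity preserves vanishing torsion) and the $v_i$ are its characteristic coefficients, $\tilde M$ is in companion form in the $v$-coordinates. Adding back $\lambda\,\mathrm{Id}$, which is invariant under any coordinate change, gives $M_v$.

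If one prefers to avoid the normal form result, there is a direct check via the Jacobian. The map $w\mapsto v$ is affine with lower unitriangular Jacobian $P=\partial w/\partial v$, where $w_i=\sum_{j\le i}\binom{N-j}{i-j}(-\lambda)^{i-j}v_j$ plus a constant. One then verifies $P\,(C_v+\lambda\,\mathrm{Id})=C_w\,P$, with $C_\bullet$ the companion matrices. This amounts to the coefficient comparison obtained by multiplying \eqref{eq:CoordinateChange} by $\mu$ and reducing modulo the characteristic polynomial.

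For $g^{-1}$, I would use the generating description rather than matrix entries. The entries of $g^{-1}$ in companion coordinates are determined by the identity
$$\sum_{i,j} g^{ij}\,\mu^{N-i}\nu^{N-j}=\frac{\sigma_w(\mu)-\sigma_w(\nu)}{\mu-\nu},$$
which is the standard flat metric geodesically compatible with $M$. Under the substitution $\mu\mapsto\mu-\lambda$, $\nu\mapsto\nu-\lambda$, the difference quotient is invariant, since $\mu-\nu$ is unchanged and $\sigma_w(\mu)=\tilde\sigma_v(\mu-\lambda)$. The transformation rule for the contravariant tensor is exactly what converts the monomial basis $\mu^{N-i}$ into $(\mu-\lambda)^{N-i}$: the covectors $dw_i$ pair with $\mu^{N-i}$, because $\sigma_w=\mu^N+\sum_i w_i\mu^{N-i}$ is linear in $w$, and likewise $dv_i$ pair with $(\mu-\lambda)^{N-i}$. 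Therefore $g_v^{-1}$ has the same Hankel form with $v$ in place of $w$.

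The main obstacle is justifying this identity for $g^{-1}$ cleanly. I would first confirm it on the displayed matrix \eqref{eq:M_and_g} by expanding $(\sigma_w(\mu)-\sigma_w(\nu))/(\mu-\nu)=\sum_{a}w_a\sum_{r+s=N-a-1}\mu^r\nu^s$. This reproduces the anti-triangular pattern with entries $w_{i+j-N-1}$. Once that is checked, the invariance argument is formal.
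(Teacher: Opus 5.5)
Your proof is correct. The paper gives no proof of this lemma: it only remarks that $w\mapsto v$ is affine with a readily obtained inverse. The implied route is therefore brute force: conjugate $C_w$ by the binomial Jacobian $P=\partial w/\partial v$, and transform $g^{-1}$ as $P^{-1}g_w^{-1}P^{-T}$ entry by entry.

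You use invariant descriptions instead. This avoids inverting $P$ and explains why the only change is the shift $\lambda\,\mathrm{Id}$.

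\textbf{The operator $M$.} Both of your arguments work.
\begin{itemize}
\item The normal-form theorem applies, because $M-\lambda\,\mathrm{Id}$ is Nijenhuis and its characteristic coefficients $v_i$ are coordinates.
\item The direct check is cleanest with the identifications $\partial_{w_i}\leftrightarrow\mu^{N-i}$ and $\partial_{v_j}\leftrightarrow(\mu-\lambda)^{N-j}$. These are $\partial_{w_i}\sigma_\mu$ and $\partial_{v_j}\sigma_\mu$, so $P$ is exactly the change of basis between them. Then $C_w$ and $C_v+\lambda\,\mathrm{Id}$ both represent multiplication by $\mu$ on $\mathbb{R}[\mu]/(\sigma_w)$, which gives $P(C_v+\lambda\,\mathrm{Id})=C_wP$.
\item One wording fix: what gets multiplied by $\mu$ are these basis polynomials, not \eqref{eq:CoordinateChange} itself.
\end{itemize}

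\textbf{The inverse metric $g^{-1}$.} Your pairing argument amounts to the following.
\begin{itemize}
\item Let $\sigma_\mu=\det(\mu\,\mathrm{Id}-M)$, viewed as a function on $\mathbb{R}^N$. Then $G(\mu,\nu):=g^{-1}(d\sigma_\mu,d\sigma_\nu)$ is coordinate-free.
\item In the two coordinate systems, $d\sigma_\mu=\sum_i\mu^{N-i}\,dw_i=\sum_a(\mu-\lambda)^{N-a}\,dv_a$.
\item Your expansion of the difference quotient, giving $g^{ij}=w_{i+j-N-1}$, is correct. The difference quotient is unchanged by the shift, so comparing coefficients of $(\mu-\lambda)^{N-a}(\nu-\lambda)^{N-b}$ gives $g_v^{ab}=v_{a+b-N-1}$.
\end{itemize}

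\textbf{Comparison.} The paper's implicit computation is elementary but needs the explicit binomial inverse of $P$. Your route needs no inversion and works verbatim for any affine shift of the spectral parameter.
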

We can now transfer the jet-approximation results from the Camassa--Holm case. Consider the first 
Hamiltonian (with flow parameter $x$):
\begin{align*}
	H_{\mathrm{CH}, 1} = K_{\mathrm{CH}, 1}(w, p) + V_{\mathrm{CH}, 1}(w).
\end{align*}
In the $v$-coordinates the kinetic energy is given by $\frac{1}{2} p_v^{T} g_v^{-1} p_v$ (and 
this is independent of $m(\mu)$). We expand the polynomial $c(\mu)$ in the basis $1, \mu-\lambda, \ldots, (\mu-\lambda)^{2N+1}$, i.e.
\begin{align*}
	c(\mu) &= \sum_{i=0}^{2N+1} c_i (\mu-\lambda)^{2N+1-i},
\end{align*}
where $c_0 :=1$. For concreteness, let us consider $c(\mu) = (\mu-\lambda)^{2N+1} + c_{2N+1}$. 
Then we have 
\begin{align*}
	f(\mu) = \frac{c(\mu)}{m(\mu)} = (\mu-\lambda)^{2N} + \frac{c_{2N+1}}{\mu-\lambda}.
\end{align*}
To rewrite $f(M_v)$ as a linear combination of $M_v^{N-1}, \ldots, M_v, \operatorname{Id}$, note first that
\begin{align*}
	f(M_v) =  (M_v-\lambda \cdot \operatorname{Id})^{2N} + c_{2N+1}(M_v-\lambda \cdot \operatorname{Id})^{-1}.
\end{align*}
In view of the lemma, we see that $M_v-\lambda \cdot \operatorname{Id}$ is essentially the companion matrix $M_w$ with 
the variables renamed. By using the potential of Camassa--Holm for $\tilde{c}(\mu) = \mu^{2N+1} + c_{2N+1}$ we can rewrite
\begin{align*}
	f(M_v) &= V_{\mathrm{CH},1}(v) (M_v-\lambda \cdot \operatorname{Id})^{N-1}+ V_{\mathrm{CH},2}(v) (M_v-\lambda \cdot \operatorname{Id})^{N-2} + \ldots + V_{\mathrm{CH}, N} \operatorname{Id}.
\end{align*}
In order to find the potential, we need to write these expressions in terms 
of powers of $M_v$ (and not $M_v-\lambda \cdot \operatorname{Id}$). However, the term in 
$M_v^{N-1}$ can only come from the first summand, so
\begin{align*}
	f(M_v) &= V_{\mathrm{CH},1}(v) M_v^{N-1}+ \ldots M_v^{N-2} + \ldots .
\end{align*}
For the purpose of the jet-approximation problem, we are thus in the same situation as for 
Camassa--Holm, the only difference being that we consider renamed variables. Thus, we can 
approximate $v_N$ up to order $2N-2$. The final step concerns the reduction map $\mathcal{R}$. By definition, we 
require that $\sigma_v(\mu)^2 \sigma_u(\mu) - c(\mu)$ is divisible by $\mu-\lambda$, so
\begin{align*}
	0 \stackrel{!}{=} \sigma_v(\lambda)^2 (u_1-\lambda) - c(\lambda) = v_N^2 (u_1-\lambda) - c_{2N+1}.
\end{align*}
Solving yields $u_{1} = \frac{c_{2N+1}}{v_N^2} + \lambda$ and we conclude that 
approximating $u_1$ is equivalent to approximating $v_N$.

\section{Outlook}
\label{sec:Outlook}

\subsection{Proving jet-density for more general (BKM) systems}

In this paper we showed jet-density of finite-gap solutions for the 
case $\deg(m) = 0$ with arbitrarily many components as well as the 
case with $\deg(m) = 1$ and one component. It would be of interest 
to establish or disprove jet-density for a 
general BKM system. Numerical experiments for $n = 2, m(\mu) = \mu$ 
and $n = 2, m(\mu) = \mu^2$ point towards jet-density holding also 
in these cases. In order to obtain jet-density, we used the 
cotangent coordinates. One can also use parameters 
coming from the spectral curve $c(\mu)$ for this purpose, and one 
could imagine that these are important in more general cases as 
they are not dynamical variables. In principle, one could aim to 
apply these techniques to PDEs that are not BKM, for example, the 
Kadomtsev--Petviashvili (KP) equation in $2 + 1$ dimensions.

\subsection{Convergence of finite-gap solutions}

The approach of convergence of Taylor polynomials up to arbitrary order is often called \enquote{convergence in a formal sense}. It would be interesting to study whether one could get a stronger notion of convergence, e.g. locally uniform convergence in a neighborhood of $u(0)$ (assuming the initial data for the BKM system is analytic). One potential obstacle is the following: The solution of the finite-gap ODE need not be defined for all $x \in \R$, but only on a small interval. By increasing $N$, the radius of convergence could shrink and in the limit tend to $0$. We explored this numerically but found no evidence that this shrinking behavior occurs (see example below). This suggests that locally uniform convergence might indeed be possible.

\begin{example}

	Consider the initial data $u(x) = e^{-x^2}$. By approximating this function up to some order in the case $m\equiv 1, n=1$, we obtain solutions as in Figure~\ref{fig:ExampleetoMinusx2}.

	\begin{figure}[htbp]

		\includegraphics[width=0.5\textwidth]{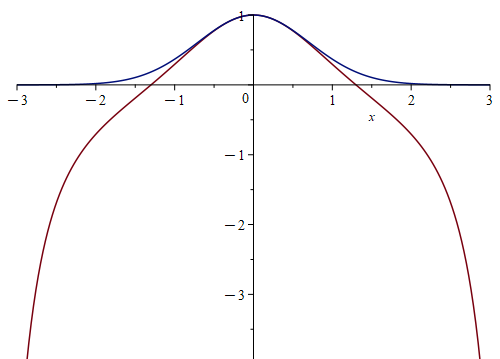}

		\caption{A sketch of the initial data $x\mapsto e^{-x^2}$ (blue) and the function solving the BKM system approximating it up to order 3 in $x_0=0$ using solutions from an ODE solver (red). For higher orders, the picture looks similar.}
		\label{fig:ExampleetoMinusx2}

	\end{figure}

\end{example}

These functions being defined on a small neighborhood suggests that we can only 
expect local convergence near the approximation point. It might be possible to overcome the problem that the finite-gap solutions are not defined for all $x$ by approximating the initial function not up to order $2N-1$, but only to a lower order and use the remaining freedom to find a solution that is complete. 
\newline

\textbf{Acknowledgements.} The participation of M.Q.\ at the MATRIX research program was supported through a MATRIX-Simons Young Scholar Award and the participation at workshop “Integrable 
Systems, Liouville Foliations and Nijenhuis Geometry Workshop” was supported by the Clay Mathematics Institute. W.S.\ thanks the UiT Aurora project MASCOT for their support. 
This work was initiated during the “Integrable 
Systems, Liouville Foliations and Nijenhuis Geometry Workshop” at 
CIMAT, Guanajuato (Mexico), 
and continued at the workshops “Integrable Systems and Differential 
Geometry” at La Trobe 
University, Melbourne (Australia) and “Nijenhuis Geometry, Haantjes 
Geometry and Separation of Variables” at MATRIX, Creswick 
(Australia), and we thank these institutions for their hospitality in 
hosting parts of this research. We thank Julia Bernatska, Alexey 
Bolsinov, Andrey Konyaev and Krzysztof Marciniak for useful 
discussions. We are particularly grateful to Boris 
Kruglikov and Vladimir S. Matveev for sketching the main approach in 
\cref{subsec:GeneralizationArbitraryPolynomial}.

\bibliographystyle{plainnat}
\bibliography{Bibliography}

\appendix

\section{Solutions of some recursion relations}

In this section, we give some useful recursion relations used in the paper.

\begin{lemma}[Recursions for the reduced tangent numbers] \label{lem:RecursionReducedTangentNumbers}
	The sequence of reduced tangent numbers $(t_i)_{i \geq 1}$ defined via
	\begin{align*}
		\tan(x) = \sum_{n=1}^{\infty} t_n \cdot 2^{n-1} \cdot \frac{x^{2n-1}}{(2n-1)!}
	\end{align*}
	satisfies $t_1=1$ and for $i>1$
	\begin{align*}
		t_i &= \frac{1}{2} \sum_{j=1}^{i-1} \binom{2i-2}{2j-1} t_j t_{i-j}.
	\end{align*}
\end{lemma}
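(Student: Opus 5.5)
We will prove the recursion by turning it into the ODE $y' = 1 + y^2$ satisfied by $y = \tan$. Put
\begin{align*}
T(x) := \sum_{n\geq 1} t_n \frac{x^{2n-1}}{(2n-1)!},
\end{align*}
so that $\tan(x) = 2^{-1}\cdot 2\, T(\ldots)$ up to a rescaling. Concretely, substituting $x \mapsto \sqrt{2}\,x$ we get $\tan(\sqrt 2 x) = \sum_n t_n 2^{n-1} 2^{(2n-1)/2} \frac{x^{2n-1}}{(2n-1)!}$. This is awkward, so instead we absorb the factor $2^{n-1}$ directly. We set $y(x) = \tan(x)$ and note that $t_1 = 1$ is read off from $\tan(x) = x + O(x^3)$.

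The core step is to compare coefficients in $y' = 1 + y^2$. The coefficient of $x^{2i-2}/(2i-2)!$ in $y'$ is $t_i 2^{i-1}$. In $y^2$, the Cauchy product of exponential generating series gives, for $i>1$, the coefficient of $x^{2i-2}/(2i-2)!$ as
\begin{align*}
\sum_{j=1}^{i-1} \binom{2i-2}{2j-1} t_j 2^{j-1}\, t_{i-j} 2^{i-j-1} = 2^{i-2}\sum_{j=1}^{i-1} \binom{2i-2}{2j-1} t_j t_{i-j}.
\end{align*}
Here only odd-odd pairs of exponents contribute, since $y$ is odd; the exponents are $2j-1$ and $2(i-j)-1$, which sum to $2i-2$. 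The constant $1$ contributes only when $i=1$.

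Equating the two coefficients and dividing by $2^{i-1}$ yields exactly
\begin{align*}
t_i = \tfrac12\sum_{j=1}^{i-1}\binom{2i-2}{2j-1}t_jt_{i-j}.
\end{align*}
The only points needing care are the bookkeeping of the powers of $2$ (which is why the normalisation $2^{n-1}$ was chosen) and the observation that the sequence is determined by the recursion and $t_1=1$, so the $t_i$ are well defined. No other obstacle is expected. If desired, one may also remark that $t_i\in\mathbb{N}$ follows inductively, since each $\binom{2i-2}{2j-1}$ pairs symmetrically in $j\leftrightarrow i-j$, so the sum is even.
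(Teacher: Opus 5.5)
Your proposal is correct and is essentially the paper's proof: both use $\tan' = 1+\tan^2$ and compare coefficients of $x^{2i-2}/(2i-2)!$ via the Cauchy product of exponential generating series. There the factor $2^{i-2}$ on the right against $2^{i-1}$ on the left produces the $\tfrac12$. Two small remarks: the opening digression with $T(x)$ and $x\mapsto\sqrt{2}\,x$ is never used and can be deleted, and in your optional integrality remark the term $j=i/2$ is unpaired for even $i$, though it is still even because $\binom{2i-2}{i-1}$ is even.
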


\begin{proof}
	Apparently, $t_1=1$. Now we can use the formula $\tan'(x) = 1 + \tan(x)^2$ and apply this to the series:
	\begin{align*}
		\tan'(x) &= \sum_{n=1}^{\infty} t_n \cdot 2^{n-1} \cdot \frac{x^{2n-2}}{(2n-2)!}, \\
		1+\tan(x)^2 &= 1 + \sum_{n=2}^{\infty} \left(\sum_{k=1}^{n-1} t_k \cdot 2^{k-1} \cdot \frac{1}{(2k-2)!} \cdot t_{n-k} \cdot 2^{(n-k)-1} \cdot \frac{1}{(2(n-k)-2)!}\right) \cdot x^{2n-2} = \\ &=  1 + \sum_{n=2}^{\infty} \left(\frac{1}{2} \sum_{k=1}^{n-1} \binom{2n-2}{2k-1} t_k \cdot t_{n-k} \right) \cdot 2^{n-1}\frac{x^{2n-2}}{(2n-2)!}.
	\end{align*}
	A comparison of the coefficients shows that the \enquote{reduced tangent numbers} satisfy this recursion relation and the lemma is proven.
\end{proof}

\begin{lemma}[Solution of some recursion relation] \label{lem:solutionRecursionRelation} \quad \\
	A sequence $(s_i)_{i \geq 0}$ satisfying
	\begin{align*}
		s_i = - \sum_{j=0}^{i-1} \binom{2i-1}{2j} s_j \cdot (-1)^{i-j} t_{i-j}
	\end{align*}
	and $s_0=1$ is given by
	\begin{align*}
		s_i = \begin{cases}
			1 & i=0 \\
			2^{i-1}  & i>0 \\
		\end{cases}.
	\end{align*}
\end{lemma}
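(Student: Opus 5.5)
The recursion determines each $s_i$ uniquely from $s_0,\dots,s_{i-1}$, because the right-hand side only involves $s_j$ with $j<i$. So it suffices to check that the proposed sequence $s_0=1$, $s_i=2^{i-1}$ for $i>0$ satisfies it. I will do this with exponential generating functions, as in the proof of Lemma~\ref{lem:RecursionReducedTangentNumbers}, by turning the convolution with the binomial coefficients $\binom{2i-1}{2j}$ into a product of an even and an odd series.

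\textbf{Step 1: generating functions.} Fix a scaling constant $\gamma$ and define
\begin{align*}
S(x)=\sum_{j\ge 0} s_j\,\gamma^j\frac{x^{2j}}{(2j)!},\qquad U(x)=\sum_{m\ge1}(-1)^m t_m\,\gamma^m\frac{x^{2m-1}}{(2m-1)!}.
\end{align*}
The coefficient of $\frac{x^{2i-1}}{(2i-1)!}$ in the product $S\cdot U$ is
\begin{align*}
\gamma^i\sum_{j=0}^{i-1}\binom{2i-1}{2j}s_j(-1)^{i-j}t_{i-j}.
\end{align*}
The coefficient of the same monomial in $S'$ is $\gamma^i s_i$. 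Hence the recursion for all $i\ge1$, together with $s_0=1$, is equivalent to the ODE $S'=-S\,U$ with $S(0)=1$.

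\textbf{Step 2: choose $\gamma=-2$ and identify both series.} With $\gamma=-2$ we get $(-1)^m\gamma^m=2^m$. By the definition of the reduced tangent numbers this gives
\begin{align*}
U(x)=2\sum_{m\ge 1} t_m 2^{m-1}\frac{x^{2m-1}}{(2m-1)!}=2\tan x.
\end{align*}
For the proposed $s_i$ we compute
\begin{align*}
S(x)=1+\sum_{j\ge1}2^{j-1}(-2)^j\frac{x^{2j}}{(2j)!}=1+\tfrac12\bigl(\cos(2x)-1\bigr)=\cos^2 x.
\end{align*}

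\textbf{Step 3: verify the ODE.} We have $S'(x)=-2\sin x\cos x$ and $S(x)U(x)=\cos^2x\cdot 2\tan x=2\sin x\cos x$. So $S'=-SU$ and $S(0)=1$ hold. Comparing coefficients gives the recursion for every $i\ge1$, and uniqueness then finishes the proof.

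The only delicate point is the bookkeeping in Step 1: getting the sign and scaling $\gamma$ right so that the factor $(-1)^{i-j}t_{i-j}$ matches the tangent series. Once $\gamma=-2$ is chosen, the rest is the identity $\cos^2x\cdot\tan x=\sin x\cos x$.
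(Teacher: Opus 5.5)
Your proof is correct and follows the paper's route: encode the recursion as a first-order ODE for the exponential generating function $\sum s_j \gamma^j x^{2j}/(2j)!$ multiplied against the tangent series, and read off the closed form. The differences are cosmetic: you take $\gamma=-2$, so that $2\tan x$ and $\cos^2 x$ appear directly, where the paper takes $\gamma=2$ and gets $-\tanh x$ (via $\tan(ix)$) and $\cosh^2 x$; and you verify the candidate and invoke uniqueness of the recursion, whereas the paper solves the ODE.
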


\begin{proof} \quad \\
	To solve this recursion, consider the power series
	\begin{align*}
		\sum_{n=1}^{\infty} t_n (-1)^{n} \cdot 2^{n-1} \cdot \frac{x^{2n-1}}{(2n-1)!} = \frac{\tan(\imath \cdot x)}{-\imath} = -\tanh(x).
	\end{align*}
	Now we make an ansatz
	\begin{align*}
		f(x) &= \sum_{n=0}^{\infty} s_n \cdot 2^{n} \cdot \frac{x^{2n}}{(2n)!}.
	\end{align*}
	Differentiating the power series, we obtain
	\begin{align*}
		f'(x) &= \sum_{n=1}^{\infty} s_n \cdot 2^{n} \cdot \frac{x^{2n-1}}{(2n-1)!}.
	\end{align*}
	Multiplying the power series for $-\tanh(x)$ and for $f(x)$ we obtain:
	\begin{align*}
		\sum_{n=0}^{\infty} s_n \cdot 2^{n} \cdot \frac{x^{2n}}{(2n)!} \cdot \sum_{n=1}^{\infty} t_n (-1)^{n} \cdot 2^{n-1} \cdot \frac{x^{2n-1}}{(2n-1)!} &= \sum_{n=1}^{\infty} \left(\sum_{k=0}^{n-1} s_k \frac{2^{k}}{(2k)!} \cdot  t_{n-k} (-1)^{n-k} \frac{2^{n-k-1}}{(2(n-k)-1)!} \right) x^{2n-1} \\
		&= \sum_{n=1}^{\infty} \frac{1}{2} \left(\sum_{k=0}^{n-1} \binom{2n-1}{2k} s_k \cdot  t_{n-k} (-1)^{n-k} \right)2^{n}\frac{ x^{2n-1}}{(2n-1)!} = \\
		&= \frac{1}{2} f'(x).
	\end{align*}
	So we obtain that the function $f$ satisfies the differential equation
	\begin{align*}
		f'(x) &= 2 f(x) \cdot \tanh(x).
	\end{align*}
	As $f(0) = 1$ follows from the initial values, we can find the solution of this ODE (e.g. with Maple, check it with the addition theorems for hyperbolic functions) as
	\begin{align*}
		f(x)&= \frac{\cosh(2x)+1}{2}.
	\end{align*}
	But we can calculate the power series for this function:
	\begin{align*}
		\sum_{n=0}^{\infty} s_n \cdot 2^{n} \cdot \frac{x^{2n}}{(2n)!} = f(x) &= \frac{\cosh(2x)+1}{2} = 1 + \frac{1}{2}\sum_{n=1}^{\infty} \frac{(2x)^{2n}}{(2n)!} =  1 + \sum_{n=1}^{\infty} 2^{n-1}2^{n}\frac{x^{2n}}{(2n)!}.
	\end{align*}
	So we obtain $s_n = 2^{n-1}$ for each $n>0$ as claimed.
\end{proof}

\begin{lemma}[Alternative recursion for the reduced tangent numbers] \label{lem:AlternativeRecursionReducedTangentNumbers} \quad \\
	For the reduced tangent numbers we have
	\begin{align*}
		t_{b+1} = \sum_{l=0}^{b-1} \binom{2b-1}{2l} t_{l+1} t_{b-l} = \sum_{l=1}^{b} \binom{2b-1}{2(l-1)} t_l t_{b+1-l}.
	\end{align*}
\end{lemma}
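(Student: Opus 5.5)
The plan is to argue exactly as in the proof of \cref{lem:RecursionReducedTangentNumbers}, through generating functions. Put
\begin{equation*}
T(x) := \tan(x) = \sum_{n\ge 1} t_n\, 2^{n-1} \frac{x^{2n-1}}{(2n-1)!}.
\end{equation*}
Differentiating term by term gives
\begin{equation*}
T'(x) = \sum_{n\ge1} t_n\, 2^{n-1}\frac{x^{2n-2}}{(2n-2)!},
\qquad
T''(x) = \sum_{n\ge 2} t_n\, 2^{n-1}\frac{x^{2n-3}}{(2n-3)!}.
\end{equation*}
The identity to be proved has $t_{b+1}$ on the left and a convolution on the right in which an odd index $2b-1$ is split as $2l + (2(b-l)-1)$. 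So the natural pairing is an even series carrying the $t_{l+1}$ against an odd series carrying the $t_{b-l}$. The even series with coefficients $t_{l+1}$ at $x^{2l}/(2l)!$ is $T'$, since $t_{l+1}2^{l}\,x^{2l}/(2l)!$ is its general term. The odd series is $T$ itself.

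The key step is the product computation:
\begin{equation*}
T'(x)\,T(x) = \sum_{b\ge1}\Bigl(\sum_{l=0}^{b-1} \tbinom{2b-1}{2l} t_{l+1}t_{b-l}\Bigr) 2^{l}\,2^{b-l-1}\frac{x^{2b-1}}{(2b-1)!}.
\end{equation*}
The powers of two combine to $2^{b-1}$, independent of $l$. On the other hand, $\tan' = 1+\tan^2$ gives $T'' = 2TT'$. The coefficient of $x^{2b-1}/(2b-1)!$ in $T''$ is $t_{b+1}2^{b}$, so in $\tfrac12 T''$ it is $t_{b+1}2^{b-1}$. Comparing coefficients of $x^{2b-1}/(2b-1)!$ on both sides then yields
\begin{equation*}
t_{b+1} = \sum_{l=0}^{b-1}\tbinom{2b-1}{2l}t_{l+1}t_{b-l}.
\end{equation*}

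The second form of the identity follows from the reindexing $l\mapsto l-1$.

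The only place needing care is the bookkeeping of the powers of $2$ and of the index range. The term $l=b$ would involve $t_0$, which does not occur, so the sum correctly stops at $l=b-1$. As a sanity check, $b=1$ gives $t_2 = t_1^2 = 1$, consistent with $\tan x = x + 2x^3/3! + \dots$, where $2 = t_2\cdot 2$.
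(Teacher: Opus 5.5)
Your proof is correct, but it takes a different route from the paper. You differentiate the Riccati equation once more, $\tan''=2\tan\tan'$. You then pair the even series $T'=\sum_{l\ge 0}t_{l+1}2^{l}x^{2l}/(2l)!$ against the odd series $T$ and compare coefficients of $x^{2b-1}/(2b-1)!$. The powers of $2$ and the range $0\le l\le b-1$ check out. One cosmetic point: in your displayed product, the factor $2^{l}2^{b-l-1}$ should be written as $2^{b-1}$ outside the inner sum, since $l$ is bound there.

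The paper does not return to the generating function. It takes the recursion $t_b=\tfrac12\sum_{l=1}^{b-1}\binom{2b-2}{2l-1}t_lt_{b-l}$, already obtained in Lemma~\ref{lem:RecursionReducedTangentNumbers} from $\tan'=1+\tan^2$. It splits $\binom{2b-2}{2l-1}=\binom{2b-3}{2l-1}+\binom{2b-3}{2l-2}$ by Pascal's rule. It then shows that the difference of the two right-hand sides vanishes by pairing the summands $l$ and $b-l$, with a separate check of the middle term $l=b/2$.

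The two arguments are the same fact seen at different levels. The Pascal split is the coefficient-level form of the product rule $(T^2)'=2TT'$, with the derivative landing on one factor or the other. The symmetry $l\leftrightarrow b-l$ is what makes the two halves equal, which is where your factor $2$ comes from.

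Your version is shorter and self-contained, and it shows at once why the identity holds. The paper's version is purely combinatorial and reuses the earlier lemma without invoking the generating function again. The cost is the extra symmetrization step and its case distinction.
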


\begin{proof} \quad \\
	Shifting the index of $b$ by one, we need
	\begin{align*}
		t_{b} \stackrel{!}{=} \sum_{l=1}^{b-1} \binom{2(b-1)-1}{2(l-1)} t_l t_{b-l} = \sum_{l=1}^{b-1} \binom{2b-3}{2(l-1)} t_l t_{b-l}.
	\end{align*}
	Note that in Lemma~\ref{lem:RecursionReducedTangentNumbers} we already derived the recursion
	\begin{align*}
		t_b &= \frac{1}{2} \sum_{l=1}^{b-1} \binom{2b-2}{2l-1} t_l t_{b-l}.
	\end{align*}
	Now we consider the differences of the right-hand sides:
	\begin{align*}
		\frac{1}{2} \sum_{l=1}^{b-1} \binom{2b-2}{2l-1} t_l t_{b-l} - \sum_{l=1}^{b-1} \binom{2b-3}{2(l-1)} t_l t_{b-l} &= \sum_{l=1}^{b-1} \left(\frac{1}{2} \left(\binom{2b-3}{2l-1} + \binom{2b-3}{2l-2}\right) - \binom{2b-3}{2l-2} \right) t_l t_{b-l} = \\&= \frac{1}{2} \sum_{l=1}^{b-1} \left(\binom{2b-3}{2l-1} - \binom{2b-3}{2l-2}\right) t_l t_{b-l}.
	\end{align*}
	For $l\leq\frac{b-1}{2}$, there is another index $b-l\geq\frac{b+1}{2}$ that has the same factors $t_l$ and $t_{b-l}$. So we can combine the summands and obtain
	\begin{align*}
		\left(\binom{2b-3}{2l-1} - \binom{2b-3}{2l-2}\right) + \left(\binom{2b-3}{2(b-l)-1} - \binom{2b-3}{2(b-l)-2}\right) = 0,
	\end{align*}
	as $\binom{2b-3}{2l-1} = \binom{2b-3}{(2b-3)-(2l-1)} = \binom{2b-3}{(2b-l)-2}$ and analogously $\binom{2b-3}{2l-2} = \binom{2b-3}{2(b-l)-1}$. \\
	For $l=\frac{b}{2}$ we have
	\begin{align*}
		\binom{2b-3}{2l-1} - \binom{2b-3}{2l-2} &= \binom{2b-3}{b-1} - \binom{2b-3}{b-2} = 0,
	\end{align*}
	again by the properties of the binomial coefficient. \\
	So in total we get $\frac{1}{2} \sum_{l=1}^{b-1} \left(\binom{2b-3}{2l-1} - \binom{2b-3}{2l-2}\right) t_l t_{b-l} = 0$ and hence we proved the recursion for the reduced cotangent numbers. 
\end{proof}

\section{Illustrative Examples for $N = 6$} \label{sec:SomeMapleCalculations}

We depict some of the Jacobians for the case $N=6$ that we calculated. On the left we indicate the expression that we differentiate with respect to the variable in the upper row. Afterwards, we evaluate the resulting expressions at the point where the usual variables $w_1, \dots, w_{N-1}, p_1, \dots, p_N$ are set to $0$. The matrices contain many zeros and we use circles, dashed circles and dashed boxes for those which we can prove to be (non)zero for general $N$.

{\Tiny \allowdisplaybreaks
	\begin{align*}
		\left[\begin{array}{c|cccccccccccc}
			& \partial p_6 &  \partial w_1 & \partial p_5 & \partial w_2 & \partial p_4 & \partial w_3 & \partial p_3 & \partial w_4 & \partial p_2 & \partial w_5 & \partial p_1 & \partial w_6 \\ \hline
			(w_1)_x & \dashedbox{-1} & 0 & 0 & 0 & 0 & 0 & 0 & 0 & 0 & 0 & 0 & 0 
			\\
			(w_1)_{2x} & 0 & 2 & 0 & 0 & 0 & 0 & 0 & 0 & 0 & 0 & 0 & -\frac{2 c_{13}}{w_6^{3}} 
			\\
			(w_1)_{3x} & -2 & 0 & 0 & 0 & 0 & 0 & 0 & 0 & 0 & 0 & \frac{2 c_{13}}{w_6^{3}} & 0 
			\\
			(w_1)_{4x} &0 & 4 & 0 & 0 & 0 & 0 & 0 & 0 & 0 & -\frac{2 c_{13}^{2}}{w_6^{5}} & 0 & \frac{4 c_{13}}{w_6^{3}} 
			\\
			(w_1)_{5x} &-4 & 0 & 0 & 0 & 0 & 0 & 0 & 0 & \frac{4 c_{13}^{2}}{w_6^{5}} & 0 & -\frac{28 c_{13}}{w_6^{3}} & 0 
			\\
			(w_1)_{6x} &0 & 8 & 0 & 0 & 0 & 0 & 0 & -\frac{4 c_{13}^{3}}{w_6^{7}} & 0 & \frac{64 c_{13}^{2}}{w_6^{5}} & 0 & -\frac{120 c_{13}}{w_6^{3}} 
			\\
			(w_1)_{7x} &-8 & 0 & 0 & 0 & 0 & 0 & \frac{8 c_{13}^{3}}{w_6^{7}} & 0 & -\frac{248 c_{13}^{2}}{w_6^{5}} & 0 & \frac{1032 c_{13}}{w_6^{3}} & 0 
			\\
			(w_1)_{8x} &0 & 16 & 0 & 0 & 0 & -\frac{8 c_{13}^{4}}{w_6^{9}} & 0 & \frac{408 c_{13}^{3}}{w_6^{7}} & 0 & -\frac{3480 c_{13}^{2}}{w_6^{5}} & 0 & \frac{4112 c_{13}}{w_6^{3}} 
			\\
			(w_1)_{9x} &-16 & 0 & 0 & 0 & \frac{16 c_{13}^{4}}{w_6^{9}} & 0 & -\frac{1264 c_{13}^{3}}{w_6^{7}} & 0 & \frac{20496 c_{13}^{2}}{w_6^{5}} & 0 & -\frac{61424 c_{13}}{w_6^{3}} & 0 
			\\
			(w_1)_{10x} &0 & 32 & 0 & -\frac{16 c_{13}^{5}}{w_6^{11}} & 0 & \frac{1824 c_{13}^{4}}{w_6^{9}} & 0 & -\frac{47616 c_{13}^{3}}{w_6^{7}} & 0 & \frac{281312 c_{13}^{2}}{w_6^{5}} & 0 & -\frac{245728 c_{13}}{w_6^{3}} 
		\end{array}\right] \\
		\left[\begin{array}{c|cccccccccccc}
			& \partial p_6 &  \partial w_1 & \partial p_5 & \partial w_2 & \partial p_4 & \partial w_3 & \partial p_3 & \partial w_4 & \partial p_2 & \partial w_5 & \partial p_1 & \partial w_6 \\ \hline
			(w_2)_x & \circled{0} & \circled{0} & \dashedbox{-1} & 0 & 0 & 0 & 0 & 0 & 0 & 0 & 0 & 0 
			\\
			(w_2)_{2x} & \dashedcircle{0} & \dashedcircle{\frac{c_{13}}{w_6^{2}}} & 0 & 2 & 0 & 0 & 0 & 0 & 0 & 0 & 0 & 0 
			\\
			(w_2)_{3x} & \dashedbox{-\frac{2 c_{13}}{w_6^{2}}} & 0 & -2 & 0 & 0 & 0 & 0 & 0 & 0 & 0 & 0 & 0 
			\\
			(w_2)_{4x} & 0 & \frac{2 c_{13}}{w_6^{2}} & 0 & 4 & 0 & 0 & 0 & 0 & 0 & 0 & 0 & -\frac{8 c_{13}^{2}}{w_6^{5}} 
			\\
			(w_2)_{5x} & \frac{8 c_{13}}{w_6^{2}} & 0 & -4 & 0 & 0 & 0 & 0 & 0 & 0 & 0 & \frac{14 c_{13}^{2}}{w_6^{5}} & 0 
			\\
			(w_2)_{6x} & 0 & \frac{4 c_{13}}{w_6^{2}} & 0 & 8 & 0 & 0 & 0 & 0 & 0 & -\frac{22 c_{13}^{3}}{w_6^{7}} & 0 & \frac{144 c_{13}^{2}}{w_6^{5}} 
			\\
			(w_2)_{7x} & -\frac{248 c_{13}}{w_6^{2}} & 0 & -8 & 0 & 0 & 0 & 0 & 0 & \frac{64 c_{13}^{3}}{w_6^{7}} & 0 & -\frac{640 c_{13}^{2}}{w_6^{5}} & 0 
			\\
			(w_2)_{8x} & 0 & \frac{8 c_{13}}{w_6^{2}} & 0 & 16 & 0 & 0 & 0 & -\frac{88 c_{13}^{4}}{w_6^{9}} & 0 & \frac{1744 c_{13}^{3}}{w_6^{7}} & 0 & -\frac{6112 c_{13}^{2}}{w_6^{5}} 
			\\
			(w_2)_{9x} &\frac{11072 c_{13}}{w_6^{2}} & 0 & -16 & 0 & 0 & 0 & \frac{232 c_{13}^{4}}{w_6^{9}} & 0 & -\frac{8296 c_{13}^{3}}{w_6^{7}} & 0 & \frac{45672 c_{13}^{2}}{w_6^{5}} & 0 
			\\
			(w_2)_{10x} & 0 & \frac{16 c_{13}}{w_6^{2}} & 0 & 32 & 0 & -\frac{296 c_{13}^{5}}{w_6^{11}} & 0 & \frac{16824 c_{13}^{4}}{w_6^{9}} & 0 & -\frac{175896 c_{13}^{3}}{w_6^{7}} & 0 & \frac{409664 c_{13}^{2}}{w_6^{5}} 
		\end{array}\right]
		\\
		\left[\begin{array}{c|cccccccccccc}
			& \partial p_6 &  \partial w_1 & \partial p_5 & \partial w_2 & \partial p_4 & \partial w_3 & \partial p_3 & \partial w_4 & \partial p_2 & \partial w_5 & \partial p_1 & \partial w_6 \\ \hline
			(w_3)_{x} & \circled{0} & \circled{0} & \circled{0} & \circled{0} & \dashedbox{-1} & 0 & 0 & 0 & 0 & 0 & 0 & 0 
			\\
			(w_3)_{2x} & \circled{0} & \circled{0} & \dashedcircle{0} & \dashedcircle{\frac{c_{13}}{w_6^{2}}} & 0 & 2 & 0 & 0 & 0 & 0 & 0 & 0 
			\\
			(w_3)_{3x} & \circled{0} & \circled{0} & \dashedbox{-\frac{2 c_{13}}{w_6^{2}}} & 0 & -2 & 0 & 0 & 0 & 0 & 0 & 0 & 0 
			\\
			(w_3)_{4x} & \dashedcircle{0} & \dashedcircle{\frac{2 c_{13}^{2}}{w_6^{4}}} & 0 & \frac{2 c_{13}}{w_6^{2}} & 0 & 4 & 0 & 0 & 0 & 0 & 0 & 0 
			\\
			(w_3)_{5x} & \dashedbox{-\frac{4 c_{13}^{2}}{w_6^{4}}} & 0 & \frac{8 c_{13}}{w_6^{2}} & 0 & -4 & 0 & 0 & 0 & 0 & 0 & 0 & 0 
			\\
			(w_3)_{6x} & 0 & -\frac{28 c_{13}^{2}}{w_6^{4}} & 0 & \frac{4 c_{13}}{w_6^{2}} & 0 & 8 & 0 & 0 & 0 & 0 & 0 & -\frac{24 c_{13}^{3}}{w_6^{7}} 
			\\
			(w_3)_{7x} & \frac{136 c_{13}^{2}}{w_6^{4}} & 0 & -\frac{248 c_{13}}{w_6^{2}} & 0 & -8 & 0 & 0 & 0 & 0 & 0 & \frac{64 c_{13}^{3}}{w_6^{7}} & 0 
			\\
			(w_3)_{8x} & 0 & \frac{1032 c_{13}^{2}}{w_6^{4}} & 0 & \frac{8 c_{13}}{w_6^{2}} & 0 & 16 & 0 & 0 & 0 & -\frac{144 c_{13}^{4}}{w_6^{9}} & 0 & \frac{1584 c_{13}^{3}}{w_6^{7}} 
			\\
			(w_3)_{9x} & -\frac{7776 c_{13}^{2}}{w_6^{4}} & 0 & \frac{11072 c_{13}}{w_6^{2}} & 0 & -16 & 0 & 0 & 0 & \frac{568 c_{13}^{4}}{w_6^{9}} & 0 & -\frac{7624 c_{13}^{3}}{w_6^{7}} & 0 
			\\
			(w_3)_{10x} & 0 & -\frac{61424 c_{13}^{2}}{w_6^{4}} & 0 & \frac{16 c_{13}}{w_6^{2}} & 0 & 32 & 0 & -\frac{1016 c_{13}^{5}}{w_6^{11}} & 0 & \frac{25320 c_{13}^{4}}{w_6^{9}} & 0 & -\frac{138144 c_{13}^{3}}{w_6^{7}} 
		\end{array}\right] \\
		\left[\begin{array}{c|cccccccccccc}
			& \partial p_6 &  \partial w_1 & \partial p_5 & \partial w_2 & \partial p_4 & \partial w_3 & \partial p_3 & \partial w_4 & \partial p_2 & \partial w_5 & \partial p_1 & \partial w_6 \\ \hline
			(w_4)_{x} &	\circled{0} & \circled{0} & \circled{0} & \circled{0} & \circled{0} & \circled{0} & \dashedbox{-1} & 0 & 0 & 0 & 0 & 0 
			\\
			(w_4)_{2x} &	\circled{0} & \circled{0} & \circled{0} & \circled{0} & \dashedcircle{0} & \dashedcircle{\frac{c_{13}}{w_6^{2}}} & 0 & 2 & 0 & 0 & 0 & 0 
			\\
			(w_4)_{3x}	& \circled{0} & \circled{0} & \circled{0} & \circled{0} & \dashedbox{-\frac{2 c_{13}}{w_6^{2}}} & 0 & -2 & 0 & 0 & 0 & 0 & 0 
			\\
			(w_4)_{4x} &	\circled{0} & \circled{0} & \dashedcircle{0} & \dashedcircle{\frac{2 c_{13}^{2}}{w_6^{4}}} & 0 & \frac{2 c_{13}}{w_6^{2}} & 0 & 4 & 0 & 0 & 0 & 0 
			\\
			(w_4)_{5x} &  \circled{0} & \circled{0} & \dashedbox{-\frac{4 c_{13}^{2}}{w_6^{4}}} & 0 & \frac{8 c_{13}}{w_6^{2}} & 0 & -4 & 0 & 0 & 0 & 0 & 0 
			\\
			(w_4)_{6x} & \dashedcircle{0} & \dashedcircle{\frac{4 c_{13}^{3}}{w_6^{6}}} & 0 & -\frac{28 c_{13}^{2}}{w_6^{4}} & 0 & \frac{4 c_{13}}{w_6^{2}} & 0 & 8 & 0 & 0 & 0 & 0 
			\\
			(w_4)_{7x} & \dashedbox{-\frac{8 c_{13}^{3}}{w_6^{6}}} & 0 & \frac{136 c_{13}^{2}}{w_6^{4}} & 0 & -\frac{248 c_{13}}{w_6^{2}} & 0 & -8 & 0 & 0 & 0 & 0 & 0 
			\\
			(w_4)_{8x} & 0 & -\frac{248 c_{13}^{3}}{w_6^{6}} & 0 & \frac{1032 c_{13}^{2}}{w_6^{4}} & 0 & \frac{8 c_{13}}{w_6^{2}} & 0 & 16 & 0 & 0 & 0 & -\frac{64 c_{13}^{4}}{w_6^{9}} 
			\\
			(w_4)_{9x} & \frac{832 c_{13}^{3}}{w_6^{6}} & 0 & -\frac{7776 c_{13}^{2}}{w_6^{4}} & 0 & \frac{11072 c_{13}}{w_6^{2}} & 0 & -16 & 0 & 0 & 0 & \frac{232 c_{13}^{4}}{w_6^{9}} & 0 
			\\
			(w_4)_{10x} & 0 & \frac{20496 c_{13}^{3}}{w_6^{6}} & 0 & -\frac{61424 c_{13}^{2}}{w_6^{4}} & 0 & \frac{16 c_{13}}{w_6^{2}} & 0 & 32 & 0 & -\frac{680 c_{13}^{5}}{w_6^{11}} & 0 & \frac{10368 c_{13}^{4}}{w_6^{9}} 
		\end{array}\right] \\
		\left[\begin{array}{c|cccccccccccc}
			& \partial p_6 &  \partial w_1 & \partial p_5 & \partial w_2 & \partial p_4 & \partial w_3 & \partial p_3 & \partial w_4 & \partial p_2 & \partial w_5 & \partial p_1 & \partial w_6 \\ \hline
			(w_5)_{x} &	\circled{0} & \circled{0} & \circled{0} & \circled{0} & \circled{0} & \circled{0} & \circled{0} & \circled{0} & \dashedbox{-1} & 0 & 0 & 0 
			\\
			(w_5)_{2x} &	\circled{0} & \circled{0} & \circled{0} & \circled{0} & \circled{0} & \circled{0} & \dashedcircle{0} & \dashedcircle{\frac{c_{13}}{w_6^{2}}} & 0 & 2 & 0 & 0 
			\\
			(w_5)_{3x} &\circled{0} & \circled{0} & \circled{0} & \circled{0} & \circled{0} & \circled{0} & \dashedbox{-\frac{2 c_{13}}{w_6^{2}}} & 0 & -2 & 0 & 0 & 0 
			\\
			(w_5)_{4x} & \circled{0} & \circled{0} & \circled{0} & \circled{0} & \dashedcircle{0} & \dashedcircle{\frac{2 c_{13}^{2}}{w_6^{4}}} & 0 & \frac{2 c_{13}}{w_6^{2}} & 0 & 4 & 0 & 0 
			\\
			(w_5)_{5x} &	\circled{0} & \circled{0} & \circled{0} & \circled{0} & \dashedbox{-\frac{4 c_{13}^{2}}{w_6^{4}}} & 0 & \frac{8 c_{13}}{w_6^{2}} & 0 & -4 & 0 & 0 & 0 
			\\
			(w_5)_{6x} &	\circled{0} & \circled{0} & \dashedcircle{0} & \dashedcircle{\frac{4 c_{13}^{3}}{w_6^{6}}} & 0 & -\frac{28 c_{13}^{2}}{w_6^{4}} & 0 & \frac{4 c_{13}}{w_6^{2}} & 0 & 8 & 0 & 0 
			\\
			(w_5)_{7x} &	\circled{0} & \circled{0} & \dashedbox{-\frac{8 c_{13}^{3}}{w_6^{6}}} & 0 & \frac{136 c_{13}^{2}}{w_6^{4}} & 0 & -\frac{248 c_{13}}{w_6^{2}} & 0 & -8 & 0 & 0 & 0 
			\\
			(w_5)_{8x} &	\dashedcircle{0} & \dashedcircle{\frac{8 c_{13}^{4}}{w_6^{8}}} & 0 & -\frac{248 c_{13}^{3}}{w_6^{6}} & 0 & \frac{1032 c_{13}^{2}}{w_6^{4}} & 0 & \frac{8 c_{13}}{w_6^{2}} & 0 & 16 & 0 & 0 
			\\
			(w_5)_{9x} &	\dashedbox{-\frac{16 c_{13}^{4}}{w_6^{8}}} & 0 & \frac{832 c_{13}^{3}}{w_6^{6}} & 0 & -\frac{7776 c_{13}^{2}}{w_6^{4}} & 0 & \frac{11072 c_{13}}{w_6^{2}} & 0 & -16 & 0 & 0 & 0 
			\\
			(w_5)_{10x} &	0 & -\frac{1264 c_{13}^{4}}{w_6^{8}} & 0 & \frac{20496 c_{13}^{3}}{w_6^{6}} & 0 & -\frac{61424 c_{13}^{2}}{w_6^{4}} & 0 & \frac{16 c_{13}}{w_6^{2}} & 0 & 32 & 0 & -\frac{160 c_{13}^{5}}{w_6^{11}} 
		\end{array}\right] \\
	\end{align*}
}
Next for the momenta, again in the order of increasing degree:
{
	\fontsize{4pt}{5pt}\selectfont
	\allowdisplaybreaks
	\begin{align*}
		\left[\begin{array}{c|cccccccccccc}
			& \partial p_6 &  \partial w_1 & \partial p_5 & \partial w_2 & \partial p_4 & \partial w_3 & \partial p_3 & \partial w_4 & \partial p_2 & \partial w_5 & \partial p_1 & \partial w_6 \\ \hline
			(p_6)_{x} &	0 & -2 & 0 & 0 & 0 & 0 & 0 & 0 & 0 & 0 & 0 & \frac{2 c_{13}}{w_6^{3}} 
			\\
			(p_6)_{2x} &	2 & 0 & 0 & 0 & 0 & 0 & 0 & 0 & 0 & 0 & -\frac{2 c_{13}}{w_6^{3}} & 0 
			\\
			(p_6)_{3x} &	0 & -4 & 0 & 0 & 0 & 0 & 0 & 0 & 0 & \frac{2 c_{13}^{2}}{w_6^{5}} & 0 & -\frac{4 c_{13}}{w_6^{3}} 
			\\
			(p_6)_{4x} &	4 & 0 & 0 & 0 & 0 & 0 & 0 & 0 & -\frac{4 c_{13}^{2}}{w_6^{5}} & 0 & \frac{28 c_{13}}{w_6^{3}} & 0 
			\\
			(p_6)_{5x} & 0 & -8 & 0 & 0 & 0 & 0 & 0 & \frac{4 c_{13}^{3}}{w_6^{7}} & 0 & -\frac{64 c_{13}^{2}}{w_6^{5}} & 0 & \frac{120 c_{13}}{w_6^{3}} 
			\\
			(p_6)_{6x} &8 & 0 & 0 & 0 & 0 & 0 & -\frac{8 c_{13}^{3}}{w_6^{7}} & 0 & \frac{248 c_{13}^{2}}{w_6^{5}} & 0 & -\frac{1032 c_{13}}{w_6^{3}} & 0 
			\\
			(p_6)_{7x} &0 & -16 & 0 & 0 & 0 & \frac{8 c_{13}^{4}}{w_6^{9}} & 0 & -\frac{408 c_{13}^{3}}{w_6^{7}} & 0 & \frac{3480 c_{13}^{2}}{w_6^{5}} & 0 & -\frac{4112 c_{13}}{w_6^{3}} 
			\\
			(p_6)_{8x} &16 & 0 & 0 & 0 & -\frac{16 c_{13}^{4}}{w_6^{9}} & 0 & \frac{1264 c_{13}^{3}}{w_6^{7}} & 0 & -\frac{20496 c_{13}^{2}}{w_6^{5}} & 0 & \frac{61424 c_{13}}{w_6^{3}} & 0 
			\\
			(p_6)_{9x} &0 & -32 & 0 & \frac{16 c_{13}^{5}}{w_6^{11}} & 0 & -\frac{1824 c_{13}^{4}}{w_6^{9}} & 0 & \frac{47616 c_{13}^{3}}{w_6^{7}} & 0 & -\frac{281312 c_{13}^{2}}{w_6^{5}} & 0 & \frac{245728 c_{13}}{w_6^{3}} 
			\\
			(p_6)_{10x} &32 & 0 & -\frac{32 c_{13}^{5}}{w_6^{11}} & 0 & \frac{5088 c_{13}^{4}}{w_6^{9}} & 0 & -\frac{204832 c_{13}^{3}}{w_6^{7}} & 0 & \frac{2297824 c_{13}^{2}}{w_6^{5}} & 0 & -\frac{5537824 c_{13}}{w_6^{3}} & 0 
		\end{array}\right] \\
		\left[\begin{array}{c|cccccccccccc}
			& \partial p_6 &  \partial w_1 & \partial p_5 & \partial w_2 & \partial p_4 & \partial w_3 & \partial p_3 & \partial w_4 & \partial p_2 & \partial w_5 & \partial p_1 & \partial w_6 \\ \hline
			(p_5)_{x} & \circled{0} & \circled{0} & 0 & -2 & 0 & 0 & 0 & 0 & 0 & 0 & 0 & 0 
			\\
			(p_5)_{2x} &	\dashedcircle{-\frac{c_{13}}{w_6^{2}}} & \dashedcircle{0} & 2 & 0 & 0 & 0 & 0 & 0 & 0 & 0 & 0 & 0 
			\\
			(p_5)_{3x} &	0 & \frac{8 c_{13}}{w_6^{2}} & 0 & -4 & 0 & 0 & 0 & 0 & 0 & 0 & 0 & -\frac{4 c_{13}^{2}}{w_6^{5}} 
			\\
			(p_5)_{4x} &	-\frac{18 c_{13}}{w_6^{2}} & 0 & 4 & 0 & 0 & 0 & 0 & 0 & 0 & 0 & \frac{6 c_{13}^{2}}{w_6^{5}} & 0 
			\\
			(p_5)_{5x} &	0 & \frac{56 c_{13}}{w_6^{2}} & 0 & -8 & 0 & 0 & 0 & 0 & 0 & -\frac{8 c_{13}^{3}}{w_6^{7}} & 0 & -\frac{24 c_{13}^{2}}{w_6^{5}} 
			\\
			(p_5)_{6x} & -\frac{116 c_{13}}{w_6^{2}} & 0 & 8 & 0 & 0 & 0 & 0 & 0 & \frac{20 c_{13}^{3}}{w_6^{7}} & 0 & -\frac{88 c_{13}^{2}}{w_6^{5}} & 0 
			\\
			(p_5)_{7x} &0 & \frac{960 c_{13}}{w_6^{2}} & 0 & -16 & 0 & 0 & 0 & -\frac{24 c_{13}^{4}}{w_6^{9}} & 0 & \frac{328 c_{13}^{3}}{w_6^{7}} & 0 & -\frac{1168 c_{13}^{2}}{w_6^{5}} 
			\\
			(p_5)_{8x} &-\frac{1928 c_{13}}{w_6^{2}} & 0 & 16 & 0 & 0 & 0 & \frac{56 c_{13}^{4}}{w_6^{9}} & 0 & -\frac{1640 c_{13}^{3}}{w_6^{7}} & 0 & \frac{8616 c_{13}^{2}}{w_6^{5}} & 0 
			\\
			(p_5)_{9x} & 0 & -\frac{14432 c_{13}}{w_6^{2}} & 0 & -32 & 0 & -\frac{64 c_{13}^{5}}{w_6^{11}} & 0 & \frac{3216 c_{13}^{4}}{w_6^{9}} & 0 & -\frac{32784 c_{13}^{3}}{w_6^{7}} & 0 & \frac{61216 c_{13}^{2}}{w_6^{5}} 
			\\
			(p_5)_{10x} &\frac{28848 c_{13}}{w_6^{2}} & 0 & 32 & 0 & \frac{144 c_{13}^{5}}{w_6^{11}} & 0 & -\frac{11456 c_{13}^{4}}{w_6^{9}} & 0 & \frac{213344 c_{13}^{3}}{w_6^{7}} & 0 & -\frac{851520 c_{13}^{2}}{w_6^{5}} & 0 
		\end{array}\right] \\
		\left[\begin{array}{c|cccccccccccc}
			& \partial p_6 &  \partial w_1 & \partial p_5 & \partial w_2 & \partial p_4 & \partial w_3 & \partial p_3 & \partial w_4 & \partial p_2 & \partial w_5 & \partial p_1 & \partial w_6 \\ \hline
			(p_4)_{x} &\circled{0} & \circled{0} & \circled{0} & \circled{0} & 0 & -2 & 0 & 0 & 0 & 0 & 0 & 0 
			\\
			(p_4)_{2x} &	\circled{0} & \circled{0} & \dashedcircle{-\frac{c_{13}}{w_6^{2}}} & \dashedcircle{0} & 2 & 0 & 0 & 0 & 0 & 0 & 0 & 0 
			\\
			(p_4)_{3x} &	\circled{0} & \circled{0} & 0 & \frac{8 c_{13}}{w_6^{2}} & 0 & -4 & 0 & 0 & 0 & 0 & 0 & 0 
			\\
			(p_4)_{4x} &	\dashedcircle{\frac{4 c_{13}^{2}}{w_6^{4}}} & \dashedcircle{0} & -\frac{18 c_{13}}{w_6^{2}} & 0 & 4 & 0 & 0 & 0 & 0 & 0 & 0 & 0 
			\\
			(p_4)_{5x} &	0 & -\frac{68 c_{13}^{2}}{w_6^{4}} & 0 & \frac{56 c_{13}}{w_6^{2}} & 0 & -8 & 0 & 0 & 0 & 0 & 0 & \frac{24 c_{13}^{3}}{w_6^{7}} 
			\\
			(p_4)_{6x} &	\frac{284 c_{13}^{2}}{w_6^{4}} & 0 & -\frac{116 c_{13}}{w_6^{2}} & 0 & 8 & 0 & 0 & 0 & 0 & 0 & -\frac{50 c_{13}^{3}}{w_6^{7}} & 0 
			\\
			(p_4)_{7x} &	0 & -\frac{1408 c_{13}^{2}}{w_6^{4}} & 0 & \frac{960 c_{13}}{w_6^{2}} & 0 & -16 & 0 & 0 & 0 & \frac{88 c_{13}^{4}}{w_6^{9}} & 0 & \frac{1104 c_{13}^{3}}{w_6^{7}} 
			\\
			(p_4)_{8x} &	\frac{3744 c_{13}^{2}}{w_6^{4}} & 0 & -\frac{1928 c_{13}}{w_6^{2}} & 0 & 16 & 0 & 0 & 0 & -\frac{280 c_{13}^{4}}{w_6^{9}} & 0 & -\frac{800 c_{13}^{3}}{w_6^{7}} & 0 
			\\
			(p_4)_{9x} &	0 & -\frac{50832 c_{13}^{2}}{w_6^{4}} & 0 & -\frac{14432 c_{13}}{w_6^{2}} & 0 & -32 & 0 & \frac{416 c_{13}^{5}}{w_6^{11}} & 0 & -\frac{1440 c_{13}^{4}}{w_6^{9}} & 0 & \frac{12864 c_{13}^{3}}{w_6^{7}} 
			\\
			(p_4)_{10x} &	\frac{214928 c_{13}^{2}}{w_6^{4}} & 0 & \frac{28848 c_{13}}{w_6^{2}} & 0 & 32 & 0 & -\frac{1176 c_{13}^{5}}{w_6^{11}} & 0 & \frac{18376 c_{13}^{4}}{w_6^{9}} & 0 & -\frac{72040 c_{13}^{3}}{w_6^{7}} & 0 
		\end{array}\right] \\
		\left[\begin{array}{c|cccccccccccc}
			& \partial p_6 &  \partial w_1 & \partial p_5 & \partial w_2 & \partial p_4 & \partial w_3 & \partial p_3 & \partial w_4 & \partial p_2 & \partial w_5 & \partial p_1 & \partial w_6 \\ \hline
			(p_3)_{x} &\circled{0} & \circled{0} & \circled{0} & \circled{0} & \circled{0} & \circled{0} & 0 & -2 & 0 & 0 & 0 & 0 
			\\
			(p_3)_{2x} &	\circled{0} & \circled{0} & \circled{0} & \circled{0} & \dashedcircle{-\frac{c_{13}}{w_6^{2}}} & \dashedcircle{0} & 2 & 0 & 0 & 0 & 0 & 0 
			\\
			(p_3)_{3x} &	\circled{0} & \circled{0} & \circled{0} & \circled{0} & 0 & \frac{8 c_{13}}{w_6^{2}} & 0 & -4 & 0 & 0 & 0 & 0 
			\\
			(p_3)_{4x} &	\circled{0} & \circled{0} & \dashedcircle{\frac{4 c_{13}^{2}}{w_6^{4}}} & \dashedcircle{0} & -\frac{18 c_{13}}{w_6^{2}} & 0 & 4 & 0 & 0 & 0 & 0 & 0 
			\\
			(p_3)_{5x} &	\circled{0} & \circled{0} & 0 & -\frac{68 c_{13}^{2}}{w_6^{4}} & 0 & \frac{56 c_{13}}{w_6^{2}} & 0 & -8 & 0 & 0 & 0 & 0 
			\\
			(p_3)_{6x} &	\dashedcircle{-\frac{34 c_{13}^{3}}{w_6^{6}}} & \dashedcircle{0} & \frac{284 c_{13}^{2}}{w_6^{4}} & 0 & -\frac{116 c_{13}}{w_6^{2}} & 0 & 8 & 0 & 0 & 0 & 0 & 0 
			\\
			(p_3)_{7x} &	0 & \frac{992 c_{13}^{3}}{w_6^{6}} & 0 & -\frac{1408 c_{13}^{2}}{w_6^{4}} & 0 & \frac{960 c_{13}}{w_6^{2}} & 0 & -16 & 0 & 0 & 0 & -\frac{272 c_{13}^{4}}{w_6^{9}} 
			\\
			(p_3)_{8x} &	-\frac{6832 c_{13}^{3}}{w_6^{6}} & 0 & \frac{3744 c_{13}^{2}}{w_6^{4}} & 0 & -\frac{1928 c_{13}}{w_6^{2}} & 0 & 16 & 0 & 0 & 0 & \frac{728 c_{13}^{4}}{w_6^{9}} & 0 
			\\
			(p_3)_{9x} &	0 & \frac{53888 c_{13}^{3}}{w_6^{6}} & 0 & -\frac{50832 c_{13}^{2}}{w_6^{4}} & 0 & -\frac{14432 c_{13}}{w_6^{2}} & 0 & -32 & 0 & -\frac{1600 c_{13}^{5}}{w_6^{11}} & 0 & -\frac{43008 c_{13}^{4}}{w_6^{9}} 
			\\
			(p_3)_{10x} &	-\frac{174712 c_{13}^{3}}{w_6^{6}} & 0 & \frac{214928 c_{13}^{2}}{w_6^{4}} & 0 & \frac{28848 c_{13}}{w_6^{2}} & 0 & 32 & 0 & \frac{6216 c_{13}^{5}}{w_6^{11}} & 0 & \frac{88072 c_{13}^{4}}{w_6^{9}} & 0 
		\end{array}\right] \\
		\left[\begin{array}{c|cccccccccccc}
			& \partial p_6 &  \partial w_1 & \partial p_5 & \partial w_2 & \partial p_4 & \partial w_3 & \partial p_3 & \partial w_4 & \partial p_2 & \partial w_5 & \partial p_1 & \partial w_6 \\ \hline
			(p_2)_{x} &\circled{0} & \circled{0} & \circled{0} & \circled{0} & \circled{0} & \circled{0} & \circled{0} & \circled{0} & -2 & 0 & 0 
			\\
			(p_2)_{2x} &\circled{0} & \circled{0} & \circled{0} & \circled{0} & \circled{0} & \circled{0} & \dashedcircle{-\frac{c_{13}}{w_6^{2}}} & \dashedcircle{0} & 2 & 0 & 0 & 0 
			\\
			(p_2)_{3x} &\circled{0} & \circled{0} & \circled{0} & \circled{0} & \circled{0} & \circled{0} & 0 & \frac{8 c_{13}}{w_6^{2}} & 0 & -4 & 0 & 0 
			\\
			(p_2)_{4x} &\circled{0} & \circled{0} & \circled{0} & \circled{0} & \dashedcircle{\frac{4 c_{13}^{2}}{w_6^{4}}} & \dashedcircle{0} & -\frac{18 c_{13}}{w_6^{2}} & 0 & 4 & 0 & 0 & 0 
			\\
			(p_2)_{5x} &\circled{0} & \circled{0} & \circled{0} & \circled{0} & 0 & -\frac{68 c_{13}^{2}}{w_6^{4}} & 0 & \frac{56 c_{13}}{w_6^{2}} & 0 & -8 & 0 & 0 
			\\
			(p_2)_{6x} &\circled{0} & \circled{0} & \dashedcircle{-\frac{34 c_{13}^{3}}{w_6^{6}}} & \dashedcircle{0} & \frac{284 c_{13}^{2}}{w_6^{4}} & 0 & -\frac{116 c_{13}}{w_6^{2}} & 0 & 8 & 0 & 0 & 0 
			\\
			(p_2)_{7x} &\circled{0} & \circled{0} & 0 & \frac{992 c_{13}^{3}}{w_6^{6}} & 0 & -\frac{1408 c_{13}^{2}}{w_6^{4}} & 0 & \frac{960 c_{13}}{w_6^{2}} & 0 & -16 & 0 & 0 
			\\
			(p_2)_{8x} &\dashedcircle{\frac{496 c_{13}^{4}}{w_6^{8}}} & \dashedcircle{0} & -\frac{6832 c_{13}^{3}}{w_6^{6}} & 0 & \frac{3744 c_{13}^{2}}{w_6^{4}} & 0 & -\frac{1928 c_{13}}{w_6^{2}} & 0 & 16 & 0 & 0 & 0 
			\\
			(p_2)_{9x} &0 & -\frac{22112 c_{13}^{4}}{w_6^{8}} & 0 & \frac{53888 c_{13}^{3}}{w_6^{6}} & 0 & -\frac{50832 c_{13}^{2}}{w_6^{4}} & 0 & -\frac{14432 c_{13}}{w_6^{2}} & 0 & -32 & 0 & \frac{4960 c_{13}^{5}}{w_6^{11}} 
			\\
			(p_2)_{10x} &\frac{230048 c_{13}^{4}}{w_6^{8}} & 0 & -\frac{174712 c_{13}^{3}}{w_6^{6}} & 0 & \frac{214928 c_{13}^{2}}{w_6^{4}} & 0 & \frac{28848 c_{13}}{w_6^{2}} & 0 & 32 & 0 & -\frac{16224 c_{13}^{5}}{w_6^{11}} & 0 
		\end{array}\right]
	\end{align*}
}

\end{document}